\providecommand{\U}[1]{\protect \rule{.1in}{.1in}}
\newtheorem{theorem}{Theorem}[section]
\newtheorem{corollary}[theorem]{Corollary}
\newtheorem{definition}[theorem]{Definition}
\newtheorem{lemma}[theorem]{Lemma}
\newtheorem{proposition}[theorem]{Proposition}
\theoremstyle{remark}
\newtheorem{remark}[theorem]{Remark}
\numberwithin{equation}{section}
\begin{document}
\title{The geometry of generalized Lam\'{e} equation, I}
\author{Zhijie Chen}
\address{Department of Mathematical Sciences, Yau Mathematical Sciences
Center, Tsinghua University, Beijing, 100084, China }
\email{zjchen2016@tsinghua.edu.cn}
\author{Ting-Jung Kuo}
\address{Department of Mathematics, National Taiwan Normal University, Taipei 11677, Taiwan }
\email{tjkuo1215@ntnu.edu.tw, tjkuo1215@gmail.com}
\author{Chang-Shou Lin}
\address{Taida Institute for Mathematical Sciences (TIMS), Center for
Advanced Study in Theoretical Sciences (CASTS), National Taiwan University,
Taipei 10617, Taiwan }
\email{cslin@math.ntu.edu.tw}

\begin{abstract}
In this paper, we prove that the spectral curve $\Gamma_{\mathbf{n}}$ of the
generalized Lam\'{e} equation with the Treibich-Verdier potential
\begin{equation*}
y^{\prime \prime }(z)=\bigg[  \sum_{k=0}^{3}n_{k}(n_{k}+1)\wp(z+\tfrac{%
\omega_{k}}{2}|\tau)+B\bigg]  y(z),\text{ \ }n_{k}\in \mathbb{Z}_{\geq0}
\end{equation*}
can be embedded into the symmetric space Sym$^{N}E_{\tau}$ of the $N$-th copy of
the torus $E_{\tau}$, where $N=\sum n_{k}$. This embedding induces an
addition map $\sigma_{\mathbf{n}}(\cdot|\tau)$ from $\Gamma_{\mathbf{n}}$
onto $E_{\tau}$. The main result is to prove that the degree of $\sigma _{%
\mathbf{n}}(\cdot|\tau)$ is equal to%
\begin{equation*}
\sum_{k=0}^{3}n_{k}(n_{k}+1)/2.
\end{equation*}
This is the first step toward constructing the premodular form associated
with this generalized Lam\'{e} equation.
\end{abstract}

\maketitle

\section{Introduction}

Throughout the paper, we let $\tau\in\mathbb{H}=\{\tau\in\mathbb{C}|\operatorname{Im}%
\tau>0\}$, $E_\tau=\mathbb{C}/\Lambda_\tau$ be a flat torus with the lattice
$\Lambda_\tau=\mathbb{Z}+ \mathbb{Z}\tau$, and $\wp(z)=\wp(z|\tau)$ be the
Weierstrass elliptic function with periods $\omega_1=1$, $\omega_2=\tau$ and
$\omega_3=1+\tau$. Let $\zeta(z)=\zeta(z|\tau):=-\int^{z}\wp(\xi|\tau)d\xi$
be the Weierstrass zeta function with two quasi-periods $\eta_{j}(\tau)$, $%
j=1,2$:%
\begin{equation}
\eta_{j}(\tau)=\zeta(z+\omega_{j} |\tau)-\zeta(z|\tau),\quad j=1,2,
\label{40-2}
\end{equation}
and $\sigma(z)=\sigma(z|\tau)$ be the Weierstrass sigma function defined by $%
\sigma(z):=\exp \int^{z}\zeta(\xi)d\xi$. Notice that $\zeta(z)$ is an odd
meromorphic function with simple poles at $\Lambda_{\tau}$ and $\sigma(z)$
is an odd entire function with simple zeros at $\Lambda_{\tau}$. For $z\in%
\mathbb{C}$ we denote $[z]:=z \ (\text{mod}\ \Lambda_{\tau}) \in E_{\tau}$.
For a point $[z]$ in $E_{\tau}$ we often write $z$ instead of $[z]$ to
simply notations when no confusion arises.

In this paper, we consider the complex second order ODE:
\begin{equation}  \label{eq21}
y^{\prime \prime }(z)=(I_{\mathbf{n}}(z;\tau)+B)y(z),\quad z\in\mathbb{C}.
\end{equation}
where $B\in\mathbb{C}$ and
\begin{equation}
I_{\mathbf{n}}(z;\tau):=\sum_{k=0}^3n_k(n_k+1)\wp(z+\tfrac{\omega_k}{2}|\tau)
\end{equation}
with $\omega_0=0$ and $n_k\in \mathbb{Z}_{\geq 0}$ for all $k$. We also
denote $I_{\mathbf{n}}(z;B,\tau):=I_{\mathbf{n}}(z;\tau)+B$. The $I_{\mathbf{%
n}}(z;\tau)$ is called the \emph{Treibich-Verdier potential}. In \cite{T,TV}
Treibich and Verdier proved that $I_{\mathbf{n}}(z;\tau)$ is an algebro-geometric
solution of the KdV hierarchy equations or equivalently a
finite-gap potential. Later Gesztesy and Weikard \cite{GW} generalized their
result to prove that any Picard potential is an algebro-geometric solution
of the KdV hierarchy equations.

We briefly recall the notion of algebro-geometric solutions. Let $y_1(z;B)$,
$y_2(z;B)$ be two solutions of (\ref{eq21}) and $\Phi(z;B):=y_1(z;B)y_2(z;B).
$ Then a direct computation show that $\Phi$ satisfies the following third
order ODE (called the second symmetric product equation of (\ref{eq21})):
\begin{equation}  \label{eq23}
\Phi^{\prime \prime \prime }(z;B)-4(I_{\mathbf{n}}(z;\tau)+B)\Phi^{\prime
}(z;B)-2I_{\mathbf{n}}^{\prime }(z;\tau)\Phi(z;B)=0.
\end{equation}
Multiplying $\Phi$ and integrating \eqref{eq23} ,we obtain that
\begin{equation}
\Phi{^{\prime }}(z;B)^2-2\Phi(z;B)\Phi^{\prime \prime }(z;B)+4(I_{\mathbf{n}%
}(z;\tau)+B)\Phi(z;B)^2  \label{eq24}
\end{equation}
is independent of $z$. Let $Q_{\mathbf{n}}(B)=Q_{\mathbf{n}}(B;\tau)$ denote
the above expression of \eqref{eq24}. Then $I_{\mathbf{n}}(z;\tau)$ is an
algebro-geometric solution of the KdV hierarchy equations if $Q_{\mathbf{n}%
}(B)$ is a polynomial of $B$ for some solution $\Phi(z;B)$; see \cite{GW}.
In this case, $Q_{\mathbf{n}}(B)$ is known as the \emph{spectral polynomial}
and $\Gamma_{\mathbf{n}}=\Gamma_{\mathbf{n}}(\tau):=\{(B,W)|W^2=Q_{\mathbf{n}}(B;\tau)\}$ is called the
\emph{spectral curve} of the potential $I_{\mathbf{n}}(z;\tau)$.

When $\mathbf{n}=(n,0,0,0)$, the potential $n(n+1)\wp(z|\tau)$ is called the
Lam\'{e} potential and
\begin{equation}  \label{Lame}
y^{\prime \prime }(z)=(n(n+1)\wp(z|\tau)+B)y(z),\quad z\in\mathbb{C}
\end{equation}
is called the Lam\'{e} equation. The fact that the Lam\'{e} potential is a
finite-gap potential was first discovered by Ince \cite{Ince}. We refer the
readers to the classic texts \cite{Halphen,Poole,Whittaker-Watson} and recent works \cite{Beukers-Waall,CLW,Dahmen,Maier} for the
Lam\'{e} equation. Therefore, \eqref{eq21} is called a \emph{generalized Lam%
\'{e} equation (GLE)} in this paper. See \cite{CKLT,GW1,Takemura1,Takemura2,Takemura3,Takemura4,Takemura5} and references therein for recent
developments of GLE \eqref{eq21}.

In this paper, we want to study \eqref{eq21} from the aspect of monodromy
representation. Clearly, \eqref{eq21} can be described as a Fuchsian
equation defined on the torus $E_\tau$ with four regular singularities at $%
\frac{\omega_k}{2}$'s. The local exponents, i.e. the roots of its indicial
equation at $\frac{\omega_k}{2}$ are $-n_k$, $n_{k}+1$. Since $n_k\in
\mathbb{Z}$, it is well-known (cf. \cite{GW1}) that $I_{\mathbf{n}}(z)$ is a Picard
potential, i.e. any solution of \eqref{eq21} is meromorphic in $\mathbb{C}$.
By applying $x=\wp(z)$, \eqref{eq21} can be transformed a second order ODE
on $\mathbb{CP}^1$ with four regular singular points at $e_k$'s and $\infty$%
, where $e_k:=\wp(\frac{\omega_k}{2}), k=1,2,3$. This is the well-known Heun
equation with four singular points, which has been extensively studied since
its isomonodromic deformation give arise to the famous Painlev\'{e} VI
equation; see e.g. \cite{GP}. For GLE \eqref{eq21}, we proved in \cite%
{Chen-Kuo-Lin} that the corresponding isomonodromic deformation equation is
the elliptic form of the Painlev\'{e} VI equation. In this paper, we also
denote GLE \eqref{eq21} by $H(\mathbf{n},B,\tau)$.

The monodromy representation $\rho_\tau$ of \eqref{eq21} is a group homomorphism
from $\pi_1(E_\tau)$ to $SL(2,\mathbb{C})$ because $I_{\mathbf{n}}(z;\tau)$
is a Picard potential. Since $\pi_1(E_\tau)$ is abelian, the monodromy group
is always abelian. Thus the monodromy group is comparely easier to compute
for \eqref{eq21} on $E_\tau$ than the Heun equation on $\mathbb{CP}^1$. In
terms of any linearly independent solutions $y_1(z)$ and $y_2(z)$, the
monodromy group is generated by two matrices $M_1$, $M_2\in SL(2,\mathbb{C})$
satisfying
\begin{equation}
(y_1,y_2)(z+\omega_i)=(y_1(z),y_2(z))M_i,\; i=1,2,\;\text{and }\;
M_1M_2=M_2M_1.  \label{15}
\end{equation}
By \eqref{15}, $M_1$ and $M_2$ can be normalized to satisfy one
of the followings.

\begin{enumerate}
\item[a)] If $\rho_\tau$ is completely reducible, then
\begin{equation}  \label{16}
M_1=\left(%
\begin{matrix}
e^{-2\pi is} & 0 \\
0 & e^{2\pi is}%
\end{matrix}%
\right),M_2=\left(%
\begin{matrix}
e^{2\pi ir} & 0 \\
0 & e^{-2\pi ir}%
\end{matrix}%
\right),\;\;(r,s)\in\mathbb{C}^2.
\end{equation}
See Section 2, where we will see that $(r,s)\notin\frac{1}{2}\mathbb{Z}^2$.

\item[b)] If $\rho_\tau$ is not completely reducible, then
\begin{equation}  \label{16-7}
M_1=\left(%
\begin{matrix}
1 & 0 \\
1 & 1%
\end{matrix}%
\right),M_2=\left(%
\begin{matrix}
1 & 0 \\
C & 1%
\end{matrix}%
\right),\;\;C\in\mathbb{C}\cup\{\infty\}.
\end{equation}
When $C=\infty$, the monodromy matrices are understood as
\begin{equation}
M_1=\left(%
\begin{matrix}
1 & 0 \\
0 & 1%
\end{matrix}%
\right),M_2=\left(%
\begin{matrix}
1 & 0 \\
1 & 1%
\end{matrix}%
\right).
\end{equation}
\end{enumerate}

The aformentioned spectral polynomial $Q_{\mathbf{n}}(B;\tau)$ also plays an
important role for the monodromy representation: $\rho_\tau$\textit{\ is
completely reducible if and only if $Q_{\mathbf{n}}(B;\tau)\neq 0$}.
Obviously, not all $2\times 2$ matrices of the form (\ref{16})-(\ref{16-7})
are monodromy matrices of \eqref{eq21}. Thus the following questions
naturally arise:

\begin{enumerate}
\item[(1)] If $Q_{\mathbf{n}}(B;\tau)\neq 0$, how to determine the monodromy
data $(r,s)$?

\item[(2)] If $Q_{\mathbf{n}}(B;\tau)=0$, how to determine the monodromy
data $C$?
\end{enumerate}

For the Lam\'{e} equation (\ref{Lame}), in \cite{CLW,LW,LW2} Chai, Wang and the
third author have constructed a \emph{premodular form} $Z^n_{r,s}(\tau)$
such that the monodromy matrices $M_1$, $M_2$ of (\ref{Lame}) at $\tau=\tau_0
$ with some $B$ are given by \eqref{16} if and only if $Z^n_{r,s}(\tau_0)=0$%
. Therefore, the image of $M_1$, $M_2$ for $\rho_{\tau_0}$ is $\{(r,s)\in%
\mathbb{C}^2\setminus\frac{1}{2}\mathbb{Z}^2|Z^n_{r,s}(\tau_0)=0\}$. We note
that $Z^n_{r,s}(\tau)$ is holomorphic in $\tau$ if $(r,s)\in\mathbb{R}%
^2\setminus \frac{1}{2}\mathbb{Z}^2$. Moreover, $Z^n_{r,s}(\tau)$ is a
modular form of weight $\frac{n(n+1)}{2}$ w.r.t. the principal congruence
subgroup $\Gamma(N)$ if $(r,s)$ is a $N$-torsion point; see \cite{LW2}. Thus
$Z^n_{r,s}(\tau)$ is called a \emph{premodular form}.

In this paper and the subsequent one \cite{CKL-pre}, we want to extend the
result in \cite{LW2} to include the Trebich-Verdier potential. Precisely, we
will establish the following theorem in \cite{CKL-pre}:

\begin{theorem}
\label{thm-premodular}  There exists a premodular form $Z_{r,s}^{\mathbf{n}%
}(\tau)$ defined in $\tau\in\mathbb{H}$ for any pair of $(r,s)\in\mathbb{C}%
^2\setminus \frac{1}{2}\mathbb{Z}^2$ such that the followings hold.

\begin{enumerate}
\item[(a)] If $(r,s)=(\frac{k_1}{N},\frac{k_2}{N})$ with $N\in 2\mathbb{N}%
_{\geq 2}$, $k_1,k_2\in\mathbb{Z}_{\geq 0}$ and $\gcd(k_1,k_2,N)=1$, then $%
Z_{r,s}^{\mathbf{n}}(\tau)$ is a modular form of weight $%
\sum_{k=0}^3n_k(n_k+1)/2$ with respect to the principal congruence subgroup $%
\Gamma(N)$.

\item[(b)] For $(r,s)\in\mathbb{C}^2\setminus\frac{1}{2}\mathbb{Z}^2$, $%
Z_{r,s}^{\mathbf{n}}(\tau_0)=0$ for some $\tau_0\in\mathbb{H}$ if and only
if there is $B\in\mathbb{C}$ such that GLE \eqref{eq21} with $\tau=\tau_0$
has its monodromy matrices $M_1$ and $M_2$ given by \eqref{16}.
\end{enumerate}
\end{theorem}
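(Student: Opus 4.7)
The plan is to leverage the degree computation of the addition map $\sigma_{\mathbf{n}}:\Gamma_{\mathbf{n}}\to E_\tau$ (the main theorem of the present paper) as the source of the weight $d:=\sum_{k=0}^{3}n_k(n_k+1)/2$. The starting point is a Hermite--Krichever ansatz adapted to the Treibich--Verdier potential: when $\rho_\tau$ is completely reducible, I would write a Floquet solution of $H(\mathbf{n},B,\tau)$ in the form
\begin{equation*}
y_{\mathbf{a}}(z)=e^{\kappa z}\,\frac{\prod_{i=1}^{N}\sigma(z-a_i|\tau)}{\prod_{k=0}^{3}\sigma(z-\tfrac{\omega_k}{2}|\tau)^{n_k}},\qquad N=\sum_{k=0}^{3}n_k,
\end{equation*}
parametrised by $\mathbf{a}=(a_1,\ldots,a_N)\in\mathrm{Sym}^N E_\tau$ and a scalar $\kappa$. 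Substituting into \eqref{eq21} produces algebraic equations for $(\mathbf{a},\kappa,B)$, and the embedding $\Gamma_{\mathbf{n}}\hookrightarrow\mathrm{Sym}^N E_\tau$ constructed in this paper provides, for every $(B,W)\in\Gamma_{\mathbf{n}}$, a canonical divisor $\mathbf{a}(B,W)$ and exponent $\kappa(B,W)$ making $y_{\mathbf{a}}$ a genuine solution.

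Next I would convert the monodromy conditions \eqref{16} into an equation on the torus. Using the quasi-periodicity $\sigma(z+\omega_j|\tau)=-e^{\eta_j(z+\omega_j/2)}\sigma(z|\tau)$ and the Legendre relation $\eta_1\omega_2-\eta_2\omega_1=2\pi i$, a direct calculation gives
\begin{equation*}
\frac{y_{\mathbf{a}}(z+\omega_j)}{y_{\mathbf{a}}(z)}=\pm\exp\!\bigl(\omega_j\kappa-2\eta_j\,\sigma_{\mathbf{n}}(B,W|\tau)\bigr),\quad j=1,2,
\end{equation*}
where $\sigma_{\mathbf{n}}(B,W|\tau):=[\sum_i a_i(B,W)]\in E_\tau$ is precisely the addition map of this paper. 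Comparing with $e^{\mp 2\pi is}$ and $e^{\pm 2\pi ir}$ and invoking the Legendre relation once more, GLE with $\tau=\tau_0$ admits monodromy \eqref{16} with parameters $(r,s)$ for some $B\in\mathbb{C}$ if and only if $[r+s\tau_0]$ lies in the image of $\sigma_{\mathbf{n}}(\cdot|\tau_0)$. By the present paper's main theorem, this fibre generically consists of exactly $d$ points.

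To manufacture $Z_{r,s}^{\mathbf{n}}(\tau)$ I would mimic the Lam\'{e} construction of \cite{LW2}: define it as a $\sigma$-function-valued section whose zero locus in $\mathbb{H}$ cuts out the preimage of the character $[r+s\tau]$ under $\sigma_{\mathbf{n}}$. Concretely, one is led to a recipe
\begin{equation*}
Z_{r,s}^{\mathbf{n}}(\tau)=\!\!\prod_{(B,W)\in\sigma_{\mathbf{n}}^{-1}([r+s\tau])}\!\!\mathcal{E}(B,W;r,s,\tau),
\end{equation*}
where $\mathcal{E}$ is a suitably chosen universal factor built from $\sigma,\zeta,\wp$ and $\eta_j$, symmetric under the hyperelliptic involution $W\mapsto -W$ of $\Gamma_{\mathbf{n}}$ so that the product is single-valued. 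The number of factors is exactly the degree $d$ provided by the present paper, which will give the claimed weight. The $\Gamma(N)$-transformation law in (a) would then follow by combining the equivariance of $\sigma_{\mathbf{n}}$ under the natural $SL(2,\mathbb{Z})$-action on $E_\tau$ with the classical transformation formulas of $\sigma$, $\zeta$ and the quasi-periods $\eta_j(\tau)$; statement (b) reduces to the equivalence established in the preceding paragraph.

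The principal obstacle, and the reason this occupies a separate paper \cite{CKL-pre}, is producing a \emph{holomorphic, non-identically-vanishing} $Z_{r,s}^{\mathbf{n}}$ from the sketchy product above while computing its weight cleanly. This requires controlling three sources of trouble: (i) the branch locus of $\sigma_{\mathbf{n}}$, and in particular the degenerate stratum $Q_{\mathbf{n}}(B;\tau)=0$ where $\rho_\tau$ falls into the non-completely-reducible case \eqref{16-7} and must be treated by the separate unipotent analysis that forces the $C$-parameter in \eqref{16-7}; (ii) the excluded set $(r,s)\in\tfrac12\mathbb{Z}^2$, where the two Floquet solutions $y_{\mathbf{a}}$ and $y_{-\mathbf{a}}$ coincide up to sign and the normalisation of \eqref{16} collapses; and (iii) absorbing the exponential quasi-periodicity of $\sigma$ so that the $\tau$-dependence of $\mathcal{E}$ assembles into a holomorphic modular form rather than a meromorphic section with essential singularities. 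Once these are handled, parts (a) and (b) will both drop out of the sigma-function identity sketched above, with the weight $d=\sum n_k(n_k+1)/2$ read off directly from the main theorem of this paper.
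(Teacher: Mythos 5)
First, a point of comparison you could not have known: the paper does \emph{not} prove Theorem \ref{thm-premodular}. It is stated as a result to be established in the sequel \cite{CKL-pre}; the present paper only supplies one ingredient, the degree formula of Theorem \ref{degreeformula}, which fixes the expected weight $\sum_k n_k(n_k+1)/2$. So your proposal can only be assessed as a programme, and as a programme it contains a genuine error at its pivot. You claim that GLE \eqref{eq21} at $\tau_0$ admits monodromy \eqref{16} with data $(r,s)$ for some $B$ if and only if $[r+s\tau_0]$ lies in the image of $\sigma_{\mathbf{n}}(\cdot|\tau_0)$. But $\sigma_{\mathbf{n}}(\cdot|\tau_0)$ is a finite morphism from the irreducible projective curve $\overline{\Gamma_{\mathbf{n}}(\tau_0)}$ onto $E_{\tau_0}$ (this surjectivity is precisely what makes its degree well defined), so its image is all of $E_{\tau_0}$; your criterion would force every $(r,s)\notin\tfrac12\mathbb{Z}^2$ to be realized at every $\tau_0$, which is false and would make part (b) vacuous (and $Z^{\mathbf{n}}_{r,s}\equiv 0$). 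The missing constraint is visible in Theorem \ref{thm6.1} and its counterpart \eqref{yby} for $H(\mathbf{n},B,\tau)$: the monodromy data is pinned down by \emph{two} simultaneous conditions, the divisor condition of type \eqref{61-37} \emph{and} the exponent condition of type \eqref{61-38}, i.e.\ $c(\mathbf{a})=r\eta_1+s\eta_2$ with $c(\mathbf{a})$ itself a $\zeta$-expression in $\mathbf{a}$. Hitting $[r+s\tau]$ under the addition map accounts only for the first; the whole content of the premodular form is to record the failure of the second along the fibre $\sigma_{\mathbf{n}}^{-1}([r+s\tau])$ — in the Lam\'e case of \cite{LW2} this is exactly the role of the Hecke form $\zeta(r+s\tau|\tau)-r\eta_1(\tau)-s\eta_2(\tau)$, of which $Z^n_{r,s}$ is a polynomial of degree $n(n+1)/2$ with modular coefficients.

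Because your factor $\mathcal{E}$ is left completely unspecified, neither the zero-locus statement (b) nor the holomorphy, non-vanishing in $(r,s)$, and weight statement (a) is actually derived; the degree of $\sigma_{\mathbf{n}}$ gives the number of factors only after one has produced a weight-one quantity per fibre point and shown the symmetrized product is a polynomial in such a quantity with modular coefficients. Two smaller defects: your quasi-period computation is off — for $y_{\mathbf{a}}$ as in \eqref{yby} the multiplier is $\exp\bigl(\kappa\omega_j-\eta_j\bigl(\sum_i a_i-\sum_k \tfrac{n_k\omega_k}{2}\bigr)\bigr)$, with no factor $2$, and it depends on the exact lift of the divisor sum to $\mathbb{C}$ together with $\kappa$, not on the class $\sigma_{\mathbf{n}}(B,W)\in E_\tau$ alone, which is another manifestation of the same missing constraint; and the worry in your obstacle (i) about the non-completely-reducible stratum is largely moot for (b), since by the analogue of Theorem \ref{thm3} that stratum only produces $(r,s)\in\tfrac12\mathbb{Z}^2$, which is excluded.
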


Following the ideas in \cite{CLW,LW2}, the spectral curve $\Gamma_{\mathbf{n}%
}(\tau)$ can be embedded into Sym$^N E_{\tau}:=E_{\tau}^N/S_N$, the
symmetric space of $N$-th copy of $E_\tau$, where $N:=\sum_{k=0}^3n_k$.%
\footnote{%
This $N$ has no relation with that in Theorem \ref{thm-premodular}-(a).}
Obviously, Sym$^NE_\tau$ has a natural addition map to $E_\tau$: $%
\{a_1,\cdots,a_N\}\mapsto\sum_{i=1}^Na_i$. Then the composition give arise to
a finite morphism $\sigma_{\mathbf{n}}(\cdot|\tau):\overline{\Gamma_{\mathbf{%
n}}(\tau)}\rightarrow E_\tau$, still called the \emph{addition map}. The
degree of $\sigma_{\mathbf{n}}$ is defined as $\deg\sigma_{\mathbf{n}%
}(\cdot|\tau)=\#\sigma_{\mathbf{n}}^{-1}(z)$, $z\in E_\tau$, counted with
multiplicity. Our main theorem in this paper is

\begin{theorem}[=Theorem \ref{degreeformula}]
\label{thm-degree} Let $\tau\in\mathbb{H}$. Then the addition map $\sigma_{%
\mathbf{n}}(\cdot|\tau): \overline{\Gamma_{\mathbf{n}}(\tau)} \to E_\tau$
has degree $\sum_{k=0}^3n_k(n_k+1)/2$.
\end{theorem}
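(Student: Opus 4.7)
The plan is to parametrize $\Gamma_{\mathbf{n}}(\tau)$ (away from a Zariski-closed subset) by the zero divisors of monodromy eigenfunctions of the GLE via a Hermite--Halphen ansatz, identify the addition map $\sigma_{\mathbf{n}}$ with the characteristic-exponent map into $E_\tau$, and then count preimages of a generic $p_0\in E_\tau$.

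First I would set up the parametrization. For $(B,W)\in\Gamma_{\mathbf{n}}(\tau)$ with $Q_{\mathbf{n}}(B)\neq 0$ the monodromy is completely reducible by the earlier discussion, so \eqref{eq21} admits two monodromy eigenfunctions $y_{\pm}(z)$ with $\Phi(z;B)=y_+(z)y_-(z)$. Using the Picard property of $I_{\mathbf{n}}$ and the local exponents $-n_k,\,n_k+1$ at each $\omega_k/2$, I would write
\begin{equation*}
y_+(z)=c\,e^{\lambda z}\,\frac{\prod_{j=1}^{N}\sigma(z-a_j\mid\tau)}{\prod_{k=0}^{3}\sigma(z-\tfrac{\omega_k}{2}\mid\tau)^{n_k}},\qquad N=\sum_{k=0}^{3}n_k,
\end{equation*}
for some $\lambda\in\mathbb{C}$, $c\in\mathbb{C}^{\times}$ and zeros $a_1,\dots,a_N\in E_\tau$ uniquely determined (up to permutation) by $(B,W)$. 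A direct computation using the quasi-periodicity $\sigma(z+\omega_i)=-e^{\eta_i(z+\omega_i/2)}\sigma(z)$ and Legendre's relation $\tau\eta_1-\eta_2=2\pi i$ converts the prescribed monodromy factors $(e^{-2\pi i s},e^{2\pi i r})$ of \eqref{16} into the single identity
\begin{equation*}
\sum_{j=1}^{N}a_j\equiv r+s\tau+\sum_{k=0}^{3}n_k\tfrac{\omega_k}{2}\pmod{\Lambda_\tau},
\end{equation*}
so $\sigma_{\mathbf{n}}(B,W)=[r+s\tau]$ up to the fixed translate by $\sum n_k\omega_k/2$ (which does not affect the degree). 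The assignment $(B,W)\mapsto\{[a_1],\dots,[a_N]\}$ realizes the embedding $\iota\colon\Gamma_{\mathbf{n}}\hookrightarrow\mathrm{Sym}^{N}E_\tau$, and $\sigma_{\mathbf{n}}$ is then its composition with the sum map $\mathrm{Sym}^{N}E_\tau\to E_\tau$.

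Next I would count $\#\sigma_{\mathbf{n}}^{-1}(p_0)$ for a generic $p_0\in E_\tau$. Lifting $p_0$ to $(r_0,s_0)\in\mathbb{C}^{2}\setminus\tfrac{1}{2}\mathbb{Z}^{2}$, the preimage consists of those $(B,W)$ for which the GLE has monodromy $(r_0,s_0)$. Imposing $\sum a_j=r_0+s_0\tau+\sum n_k\omega_k/2$ and the local algebraic conditions at each $\omega_k/2$ obtained by substituting the ansatz into \eqref{eq21} and matching Laurent coefficients up to order $n_k$ yields a $0$-dimensional system whose length is the sought degree. For $\mathbf{n}=(n,0,0,0)$ this reduces to the classical Lam\'e count of \cite{CLW,LW2} giving $n(n+1)/2$, and the present claim is the natural generalization. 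A parallel verification is available via intersection theory on the $\mathbb{P}^{N-1}$-bundle $\mathrm{Sym}^{N}E_\tau\to E_\tau$: $\deg\sigma_{\mathbf{n}}$ equals the intersection number of $\iota(\overline{\Gamma_{\mathbf{n}}})$ with a generic Abel--Jacobi fiber, and a determinantal computation should extract $\sum n_k(n_k+1)/2$.

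The hard part will be the bookkeeping at degenerate loci: (i) extending the Hermite--Halphen ansatz across the branch locus $\{W=0\}$ (where the monodromy is no longer completely reducible) and across $\overline{\Gamma_{\mathbf{n}}}\setminus\Gamma_{\mathbf{n}}$, so that no preimages are gained or lost; (ii) coincidences among the $a_j$'s, and coincidences of $a_j$'s with the singular points $\omega_k/2$, which alter the local vanishing order of $y_+$ and must be weighted correctly in the fiber count; (iii) verifying that $\iota$ is generically injective, so that the degree on $\overline{\Gamma_{\mathbf{n}}}$ agrees with the degree on the image. A clean way to control (ii) would be to prove a precise divisor formula for $y_+^{2}$ from the integral relation $W^{2}=Q_{\mathbf{n}}(B;\tau)$ and the polynomial-in-$\wp$ structure of $\Phi$, so that the local orders at the $\omega_k/2$'s are forced by the spectral data rather than analyzed case-by-case.
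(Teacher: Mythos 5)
Your setup---the Hermite--Halphen ansatz for a monodromy eigenfunction, the induced embedding of the spectral curve into $\mathrm{Sym}^{N}E_{\tau}$, and the identification of $\sigma_{\mathbf{n}}$ with the sum of the zeros $[a_1]+\cdots+[a_N]$ (equivalently with $[r+s\tau]$ up to a fixed translate)---agrees with what the paper establishes in Sections 2, 3 and 5.1, but it is only the setup: the degree computation itself is missing. You reduce the fiber count over a generic $p_0$ to ``a $0$-dimensional system whose length is the sought degree,'' obtained by matching Laurent coefficients at the $\omega_k/2$'s, and then assert the answer because it reduces to the Lam\'{e} count for $\mathbf{n}=(n,0,0,0)$ and is ``the natural generalization.'' That length is exactly the content of the theorem and is never computed; even in the Lam\'{e} case the value $n(n+1)/2$ is not read off from the local conditions but required the Theorem of the Cube argument of \cite{LW2}. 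Your ``parallel verification'' via intersection theory on the $\mathbb{P}^{N-1}$-bundle $\mathrm{Sym}^{N}E_{\tau}\to E_{\tau}$ has the same defect: the degree is the intersection number of the image curve with an Abel--Jacobi fiber only once you know the class of that curve, and the promised ``determinantal computation'' is not carried out. The paper explicitly notes that this algebro-geometric route, which works for the Lam\'{e} potential, does not seem to extend to the Treibich--Verdier case, which is precisely why a different mechanism is needed.

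What the paper actually does, and what is absent from your proposal, is a deformation-and-induction argument. It introduces the auxiliary equation \eqref{1505} with two extra apparent singularities at $\pm p$, constructs the curve $\Gamma_{\mathbf{n},p}$ and its addition map $\sigma_{\mathbf{n},p}$, writes $\wp(\sigma_{\mathbf{n},p}(A,W)|\tau)$ as a rational function of $A$, and proves (Lemma \ref{main thm4}) that $\deg\sigma_{\mathbf{n},p}$ is independent of $(p,\tau)$. The key step is Theorem \ref{main thm}, the splitting $\deg\sigma_{\mathbf{n},p}=\deg\sigma_{\mathbf{n}_k^{+}}+\deg\sigma_{\mathbf{n}_k^{-}}$, proved by the delicate asymptotic bookkeeping of Lemma \ref{lemma5}: as $p\to\omega_k/2$ one tracks how the zeros $A_i^{0}(p)$ and poles $A_i^{\infty}(p)$ of that rational function distribute themselves between the two limiting equations $H(\mathbf{n}_k^{+},B,\tau)$ and $H(\mathbf{n}_k^{-},B,\tau)$, and shows that exactly two poles escape at rate $p^{-1}$ while the remaining ones converge to the zeros and poles of $\wp(\sigma_{\mathbf{n}_k^{\pm}}(B,\hat{W})|\tau)$. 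The formula $\sum_k n_k(n_k+1)/2$ then follows by induction on $\mathbf{n}$, with the Lam\'{e} case of \cite{LW2} as the base. Without either this mechanism or an honest evaluation of your $0$-dimensional system---including the correct multiplicities at $W=0$, at the points at infinity of $\overline{\Gamma_{\mathbf{n}}}$, and at collisions $a_i=a_j$ or $a_i=\omega_k/2$, which you yourself flag as ``the hard part''---the proposal does not prove the theorem.
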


A corollary of Theorem \ref{thm-degree} is that $\deg\sigma_{\mathbf{n}%
}(\cdot|\tau)$ (the same as the weight of the premodular form in Theorem \ref%
{thm-premodular}) is independent of $\tau$, which is not very obvious at the
moment. For the case of the Lam\'{e} equation, Theorem \ref{thm-degree} was
proved in \cite{LW2} by applying \emph{Theorem of the Cube} for morphisms
between varieties in algebraic geometry. But this method seems not work in
the general case. Our strategy is to study the general class of ODE:
\begin{equation}
y^{\prime \prime}(z)=I_{\mathbf{n}}(z;p,A,\tau)y(z),   \label{1505}
\end{equation}
where the potential $I_{\mathbf{n}}(z;p,A,\tau)$ is given by%
\begin{equation}
I_{\mathbf{n}}(z;p,A,\tau) =\left[
\begin{array}{l}
\sum_{k=0}^{3}n_{k}(n_{k}+1)\wp( z+\tfrac{\omega_{k}}{2}|\tau) +\frac{3}{4}%
(\wp(z+p|\tau) \\
+\wp(z-p|\tau))+A( \zeta(z+p|\tau)-\zeta(z-p|\tau)) +B%
\end{array}
\right] ,   \label{1503}
\end{equation}
with $A\in \mathbb{C}$, $p\in E_{\tau}\setminus E_{\tau}[2]$, $%
E_{\tau}[2]:=\{\frac{\omega_k}{2}|k=0,1,2,3\}+\Lambda_{\tau}$ and
\begin{equation}
B=A^{2}-\zeta(2p) A-\frac{3}{4}\wp ( 2p) -\sum _{k=0}^{3}n_{k}( n_{k}+1) \wp
( p+\tfrac{\omega_{k}}{2}|\tau).   \label{1101}
\end{equation}
The identity \eqref{1101} is to guarantee that all the singular points of %
\eqref{1505} are apparent i.e. all solutions of \eqref{1505} are free of
logarithmic singularity at any singular point. See \cite{Chen-Kuo-Lin,CKL-JGP,CKLW,Takemura} for recent developments of (\ref{1505}). Like \eqref{eq21}, we could
associate a hyperelliptic curve $\Gamma_{\mathbf{n},p}(\tau):=\{(A,W)|W^2=Q_{%
\mathbf{n},p}(A;\tau)\}$ and an addition map $\sigma_{\mathbf{n},p}$ with %
\eqref{1505}. These will be established in Sections 2 and 3. The reason we
introduce \eqref{1505} is that as $p\rightarrow\omega_k/2,k=0,1,2,3$, the
limiting equation of \eqref{1505} would be \eqref{eq21} with $\mathbf{n}=%
\mathbf{n}^\pm_k$, where $\mathbf{n}^\pm_k$ is defined by replacing $n_k$ in
\textbf{n} with $n_k\pm 1$. Due to this relation with \eqref{eq21}, we
expect to have the following connection.

\begin{theorem}[=Theorem \ref{main thm}]
\label{thm-degree-addition} For $k\in \{0,1,2,3\}$, there holds
\begin{equation*}
\deg\sigma_{\mathbf{n},p}(\cdot|\tau)=\deg\sigma_{\mathbf{n}%
^+_k}(\cdot|\tau)+\deg\sigma_{\mathbf{n}^-_k}(\cdot|\tau).
\end{equation*}
\end{theorem}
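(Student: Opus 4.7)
The plan is to exploit the limit $p\to\omega_k/2$, under which equation \eqref{1505} formally degenerates into a generalized Lam\'e equation of the form \eqref{eq21}. The heuristic is the following. In the potential $I_{\mathbf{n}}(z;p,A,\tau)$ of \eqref{1503}, as $p\to\omega_k/2$, the even pair $\tfrac{3}{4}(\wp(z+p|\tau)+\wp(z-p|\tau))$ merges into $\tfrac{3}{2}\wp(z+\tfrac{\omega_k}{2}|\tau)$ plus a regular term, while the odd pair $A(\zeta(z+p|\tau)-\zeta(z-p|\tau))$ tends naively to zero but can contribute an additional $c\,\wp(z+\tfrac{\omega_k}{2}|\tau)$ term whenever $A$ blows up at an appropriate rate in $p-\tfrac{\omega_k}{2}$. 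A standard expansion shows that there are exactly two such ``branches'' of blow-up of $A$, producing in the limit the coefficients $(n_k+1)(n_k+2)$ and $(n_k-1)n_k$ in front of $\wp(z+\tfrac{\omega_k}{2}|\tau)$; that is, the two limiting equations are $H(\mathbf{n}^+_k,\tilde B,\tau)$ and $H(\mathbf{n}^-_k,\tilde B,\tau)$ for suitable $\tilde B$ determined by \eqref{1101}.

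Concretely, the first step is to analyze how the spectral polynomial $Q_{\mathbf{n},p}(A;\tau)$ attached to \eqref{1505} (which will be constructed in Sections 2--3) behaves as $p\to\omega_k/2$. I expect, after a suitable rational change of variable turning the blow-up of $A$ into a finite parameter $B$, that $Q_{\mathbf{n},p}$ specializes (up to a nonzero factor) to the product $Q_{\mathbf{n}^+_k}(B;\tau)\cdot Q_{\mathbf{n}^-_k}(B;\tau)$. Hence the spectral curve $\Gamma_{\mathbf{n},p}(\tau)$ degenerates as a flat family, at $p=\tfrac{\omega_k}{2}$, into the scheme-theoretic union $\Gamma_{\mathbf{n}^+_k}(\tau)\cup\Gamma_{\mathbf{n}^-_k}(\tau)$. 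This step rests on the apparent-singularity identity \eqref{1101} together with a careful Laurent expansion of the coefficients of the second symmetric product equation \eqref{eq23} attached to \eqref{1505} in the local parameter $p-\tfrac{\omega_k}{2}$.

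Next, I would track the addition map through this degeneration. Since $\mathrm{Sym}^N E_\tau$ is proper and the addition maps are finite morphisms, once $\Gamma_{\mathbf{n},p}(\tau)$ is realized as a flat family over a neighborhood of $p=\tfrac{\omega_k}{2}$, the degree of $\sigma_{\mathbf{n},p}(\cdot|\tau)$ is conserved. At a generic value $z\in E_\tau$, each point of the fiber $\sigma_{\mathbf{n},p}^{-1}(z)$ corresponds, via the embedding into $\mathrm{Sym}^N E_\tau$, to the zero divisor of a product solution $\Phi(z;B)=y_1y_2$ of the third-order equation \eqref{eq23} for \eqref{1505}. The local analysis above assigns each such solution, in the limit, to exactly one of the two branches and therefore to a point in either $\sigma_{\mathbf{n}^+_k}^{-1}(z)$ or $\sigma_{\mathbf{n}^-_k}^{-1}(z)$, which after verifying bijectivity of this assignment yields the additivity formula.

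The main obstacle, I expect, is step two: ensuring that no degree ``escapes'' in the limit and that the two limiting components appear with multiplicity one. Concretely, one must verify that $\Gamma_{\mathbf{n},p}(\tau)$ is flat over a neighborhood of $p=\tfrac{\omega_k}{2}$, that the limiting polynomial contains neither $\Gamma_{\mathbf{n}^\pm_k}$ with higher multiplicity nor any extraneous component arising from the apparent-singularity constraint \eqref{1101}, and that in the limit no fiber points collide on the Sym$^N E_\tau$ side in a way that would inflate the generic fiber count. This will be handled by an explicit Frobenius-type local analysis of solutions of \eqref{1505} at $z=\pm p$ as $p\to\tfrac{\omega_k}{2}$, combined with a global degree count of $Q_{\mathbf{n},p}$ in $A$ that is uniform in $p$.
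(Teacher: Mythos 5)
Your plan is in the right spirit (degenerate \eqref{1505} at $p=\tfrac{\omega_k}{2}$ into $H(\mathbf{n}_k^{+},\cdot)$ and $H(\mathbf{n}_k^{-},\cdot)$), but as written it has two genuine gaps. First, the pivotal claim of your step one --- that after a single rational change of variable $Q_{\mathbf{n},p}$ specializes, up to a nonzero factor, to $Q_{\mathbf{n}_k^{+}}\cdot Q_{\mathbf{n}_k^{-}}$, so that $\Gamma_{\mathbf{n},p}$ degenerates flatly to $\Gamma_{\mathbf{n}_k^{+}}\cup\Gamma_{\mathbf{n}_k^{-}}$ --- is asserted rather than proved, and as formulated it cannot be implemented by one substitution: the relation \eqref{101} between $A$ and $B$ is quadratic in $A$, and the two limiting equations come from the two distinct blow-up branches $A\sim-(\tfrac14+n_0)p^{-1}$ and $A\sim(\tfrac34+n_0)p^{-1}$ (Proposition \ref{prop11}, stated for $k=0$), whose limiting $B^{\pm}$ are read off from the subleading coefficient via \eqref{qe} with opposite signs. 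More importantly, even granting such a degeneration of the spectral curve, the theorem concerns the degree of the addition map, which is not a function of the spectral polynomial: for the Lam\'e case $\deg_B Q_{(n,0,0,0)}=2n+1$ while $\deg\sigma_{(n,0,0,0)}=n(n+1)/2$. So "a global degree count of $Q_{\mathbf{n},p}$ in $A$ uniform in $p$" cannot by itself yield the additivity.

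Second, the obstacle you flag at the end is not a technicality to be deferred; it is the entire content of the proof, and "flatness plus properness of $\mathrm{Sym}^NE_\tau$" does not resolve it. The degree is read from a fiber of $\sigma_{\mathbf{n},p}$, and the dangerous fiber is the one over $0$ (for $k=0$): the two points at infinity of $\overline{\Gamma_{\mathbf{n},p}}$ map to $\pm[p]\to 0$, so as $p\to0$ part of the fiber $\sigma_{\mathbf{n},p}^{-1}(0)$ consists of points where $A(p)$ diverges in a mode corresponding to neither $H(\mathbf{n}^{+},\cdot)$ nor $H(\mathbf{n}^{-},\cdot)$; when the limit curve is reducible and fiber points migrate to the points at infinity, the fiber count of the limit map is exactly what must be controlled. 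The paper's proof does this in Lemma \ref{lemma5}: writing $\wp\circ\sigma_{\mathbf{n},p}$ as the rational function \eqref{pu5} in $A$, it shows that exactly two of the poles $A_i^{\infty}(p)$ are of the non-convergent type (D-iii) and the remaining ones split as $m_1-1$ and $m_2-1$ between the two branches, and ruling out the competing distributions $(0,2,1)$, $(2,0,1)$ requires the quantitative input \eqref{113}--\eqref{114} of Takemura on $\wp(\sigma_{\mathbf{n}}(B,\hat W))$ as $B\to\infty$, i.e.\ precise knowledge of the limit maps at the points at infinity of the limit curves; the conclusion then follows by comparing \eqref{pu5} with \eqref{pu5-5} and using the $(p,\tau)$-independence of the degrees (Lemmas \ref{main thm4} and \ref{cor ind}) together with the surjectivity statement of Theorem \ref{cor7}. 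Your proposal contains no mechanism playing the role of \eqref{113}--\eqref{114} or of this pole bookkeeping, and the "bijectivity of the assignment" of generic fiber points (no collisions, no escaping or inflated multiplicity) is precisely the statement to be proved, not an input; without these ingredients the argument does not close.
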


The paper is organized as follows. In Section \ref{monodromy}, we will give
a brief review of the monodromy representation of \eqref{1505}. The similar
argument also holds for \eqref{eq21}. We will prove the existence of the
embedding of $\Gamma_{\mathbf{n},p}$ and the addition map from $\Gamma_{%
\mathbf{n},p}$ onto $E_\tau$ in Section 3. In Sections 4 and 5, we will
study the limiting problem of \eqref{1505} under two case: (i) fix $p$ and $%
A\rightarrow\infty$; (ii) $p\rightarrow\frac{\omega_k}{2}$ and $%
A(p)\rightarrow\infty$. This limit problem plays a crucial role in our study
of the degree in Section 6, where Theorem \ref{thm-degree-addition} is
proved and then Theorem \ref{thm-degree} will be obtained by applying Theorem %
\ref{thm-degree-addition}.

\section{ Monodromy representation}

\label{monodromy}

Let $\mathbf{n}=(n_{0},n_{1},n_{2},n_{3})$, $n_{k}\in \mathbb{C}$. For any
fixed $\tau \in \mathbb{H}$ and $\pm p\not \in E_{\tau}[2]:=\{\frac{\omega_k}{2}|k=0,1,2,3\}+\Lambda_{\tau}$, we consider the
following generalized Lam\'{e} equation GLE$(\mathbf{n},p,A,\tau)$ on the torus $%
E_{\tau}$:%
\begin{equation}
y^{\prime \prime}(z)=I_{\mathbf{n}}(z;p,A,\tau)y(z)\text{ \ on }E_{\tau},
\label{5051}
\end{equation}
where the potential $I_{\mathbf{n}}(z;p,A,\tau)$ is given by%
\begin{equation}
I_{\mathbf{n}}(z;p,A,\tau) =\left[
\begin{array}{l}
\sum_{k=0}^{3}n_{k}(n_{k}+1)\wp( z+\tfrac{\omega_{k}}{2}|\tau) +\frac{3}{4}%
(\wp(z+p|\tau) \\
+\wp(z-p|\tau))+A(\zeta(z+p|\tau)-\zeta(z-p|\tau)) +B%
\end{array}
\right] ,   \label{5031}
\end{equation}
with $A\in \mathbb{C}$ and
\begin{equation}
B=A^{2}-\zeta( 2p) A-\frac{3}{4}\wp ( 2p) -\sum _{k=0}^{3}n_{k}( n_{k}+1)
\wp ( p+\tfrac{\omega_{k}}{2}|\tau).   \label{101}
\end{equation}
GLE$(\mathbf{n},p,A,\tau)$ is of Fuchsian type with singularities at $%
S:=E_{\tau}[2]\cup \{ \pm \lbrack p]\}$.

Let us briefly recall the associated monodromy representation. Let $Y(z;\tau)
$ $=(y_{1}( z;\tau ) ,y_{2}(z;\tau))$ be a fundamental system of solutions
of GLE$(\mathbf{n},p,A,\tau)$ near a fixed base point $q_{0}\not \in S$. In
general, $Y(z;\tau)$ is multi-valued with respect to $z$ and might have
branch points at $S$. For any loop $\ell$ $\in \pi_{1}(E_{\tau}\backslash
S,q_{0})$, there exists a matrix $\rho_{\tau }(\ell)\in SL(2,\mathbb{Z})$
such that $\ell^{\ast}Y(z;\tau)=Y(z;\tau)\rho_{\tau}(\ell)$. Here $%
\ell^{\ast }Y(z;\tau)$ denotes the analytic continuation of $Y(z;\tau)$
along the loop $\ell$. This induces a group homomorphism
\begin{equation}
\rho_{\tau}:\pi_{1}(E_{\tau}\backslash S,q_{0})\rightarrow SL(2,\mathbb{C}).
\label{mr}
\end{equation}
which is called the monodromy representation of the GLE$(\mathbf{n},p,A,\tau)
$.

The local exponent of GLE$(\mathbf{n},p,A,\tau)$ at $\frac{\omega_{k}}{2}$
are $-n_k, n_k+1$. In this paper, we are interested in the case $n_{k}\in
\mathbb{Z}_{\geq0}$ (hereafter we always assume $n_{k}\in \mathbb{Z}_{\geq0}$
for all $k$), because in this case the local monodromy matrix at each $\frac{%
\omega_{k}}{2}$ is $I_{2}$ and then the monodromy representation can be
reduced to a homomorphism $\rho_{\tau}:$ $\pi_{1}(E_{\tau}\backslash \{\pm
[p]\},q_{0})\to SL(2,\mathbb{Z})$. Let $\gamma_{\pm}\in\pi_{1}(E_{\tau}%
\backslash S, q_{0})$ be a simple loop encircling $\pm p$ counterclockwise
respectively, and $\ell_{j}$, $j=1,2$, be two fundamental cycles of $E_{\tau}
$ connecting $q_{0}$ with $q_{0}+\omega_{j}$ such that $\ell_{j}$ does not
intersect with $\ell_{p}+\Lambda_{\tau}$ (here $\ell_{p}$ is the straight
segment connecting $\pm p$) and satisfies
\begin{equation}
\gamma_{+}\gamma_{-}=\ell_{1}\ell_{2}\ell_{1}^{-1}\ell_{2}^{-1}\text{ in }%
\pi_{1}\left( E_{\tau}\backslash  \{ \pm [p] \} ,q_{0}\right) .
\label{II-iv}
\end{equation}
On the other hand, the local exponents at $\pm p$ are $\frac{-1}{2}, \frac{3%
}{2}$. Since \eqref{101} implies that $\pm p$ are apparent singularities
(i.e. non-logarithmic, see \cite{Chen-Kuo-Lin}), we have
\begin{equation}
\rho_{\tau}(\gamma_{\pm})=-I_{2}.   \label{89-2}
\end{equation}
Denote $\rho_{\tau}(\ell_{j})$ by $M_{j}$. Then the monodromy group of GLE$(%
\mathbf{n},p,A,\tau)$ is generated by $\{-I_{2},M_{1},M_{2}\}$. Together
with (\ref{II-iv})-(\ref{89-2}), we immediately obtain $M_{1}M_{2}=M_{2}M_{1}
$, which implies that there is always a solution denoted by $%
y_1(z)=y_{1}(z;A)$ being a common eigenfunction, i.e. $\ell^{%
\ast}_jy_{1}(z;A)=\varepsilon_j y_{1}(z;A)$, $j=1,2$. Therefore the
monodromy representation is always \textit{reducible}.

From the local exponents at $\frac{\omega_{k}}{2}$ and $\pm p$, it is easy
to see that $\frac{\omega_{k}}{2}$ is not a branch point of $y_{1}(z)$ but $%
\pm p$ is a branch point with ramification index $2$, i.e. $y_{1}(\pm
p+e^{2\pi i}z)=-y_{1}(\pm p+z)$ if $| z|>0$ is small. Then $y_{1}(z)$ can be
viewed as a single-valued meromorphic function in $\mathbb{C}%
\backslash(\ell_{p}+\Lambda_{\tau})$. Therefore, the analytic continuation
of $y_1(z)$ along the fundamental cycles $\ell_j$ is the translation of $y_1$
by $\omega_j,j=1,2$, namely
\begin{align}
y_{1}(z+\omega_{j};A) =\ell_{j}^{\ast}y_{1}(z;A)
=\varepsilon_{j}y_{1}(z;A),\quad j=1,2.  \label{304-111}
\end{align}

Since $\mathbb{C}\backslash(\ell_p+\Lambda_\tau)$ is symmetric about 0, $%
y_1(-z;A)$ is well-defined in $\mathbb{C}\setminus(\ell_p+\Lambda_\tau)$ and
also a solution of the same GLE$(\mathbf{n},p,A,\tau)$. Define
\begin{equation}
y_2(z)=y_2(z;A):= y_1(-z;A) \quad\text{in}\;\,\mathbb{C}\backslash(\ell_p+%
\Lambda_\tau).
\end{equation}
Then by \eqref{304-111} we see that $y_2(z;A)$ is also a common
eigenfunction, i.e.
\begin{equation}
y_{2}(z+\omega_{j};A)=\ell_{j}^{\ast}y_{2}(z;A)=%
\varepsilon_{j}^{-1}y_{2}(z;A),\text{ }j=1,2.   \label{304-12}
\end{equation}
If $\varepsilon_j\neq \pm 1$ for some $j$, then $y_1(z)$ and $y_2(z)$ are
linearly independent. In general, $y_1(z)$ and $y_2(z)$ might be linearly
dependent.

\begin{definition}
GLE$(\mathbf{n},p,A,\tau)$ is called \textit{completely reducible} if its
monodromy group acting on the 2-dimensional solution space has two linearly
independent common eigenfunctions. Otherwise, it is called not completely
reducible.
\end{definition}

We will see later that \textit{\ GLE$(\mathbf{n},p,A,\tau)$ is completely
reducible if and only if the above $y_1(z)$ and $y_2(z):= y_1(-z)$ are
linearly independent.}

The branch point $\pm p$ of $y_1(z)$ might cause trouble in analysis. To
avoid it, we introduce
\begin{equation}
\Psi_{p}(z):=\frac{\sigma(z) }{\sqrt{\sigma(z-p) \sigma (z+p)}}.
\label{61-3}
\end{equation}
By using the transformation law of $\sigma(z)$,
\begin{equation}  \label{123123}
\sigma(z+\omega_j)=-e^{\eta_j(z+\frac{1}{2}\omega_j)}\sigma(z),\quad j=1,2,
\end{equation}
we see that $\Psi_p(z)^2$ is an \emph{elliptic function}. We have the
following lemma.

\begin{lemma}
\label{lem2-6}Recall that $\ell_{j},j=1,2$ are the two fixed fundamental
cycles of $E_{\tau}$ which do not intersect with $\ell_{p}+\Lambda_{\tau}$.
Then the analytic continuation of $\Psi_p(z) $ along $\ell_{j}$ satisfies%
\begin{equation}
\ell_{j}^{\ast}\Psi_{p}(z)=\Psi_{p}(z),\text{ }j=1,2.   \label{304-1}
\end{equation}
\end{lemma}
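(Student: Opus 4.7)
The plan is to establish $\ell_j^\ast\Psi_p=\Psi_p$ in two steps: first show the equality up to a global sign $\varepsilon_j\in\{\pm1\}$ using that $\Psi_p^2$ is elliptic, and then pin the sign down to $+1$ using the topological hypothesis that $\ell_j$ avoids $\ell_p+\Lambda_\tau$.

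For the first step, the classical Weierstrass identity
\[
\wp(z)-\wp(p)=-\frac{\sigma(z+p)\sigma(z-p)}{\sigma(z)^2\sigma(p)^2}
\]
rewrites $\Psi_p(z)^2=-1/[\sigma(p)^2(\wp(z)-\wp(p))]$, which makes the $\Lambda_\tau$-periodicity of $\Psi_p^2$ transparent; equivalently, applying $\sigma(z+\omega_j)=-e^{\eta_j(z+\omega_j/2)}\sigma(z)$ to each of the three sigma factors defining $\Psi_p$ shows that the two $(-1)$'s in the denominator and the factors $e^{\pm\eta_j p}$ cancel, yielding $\Psi_p^2(z+\omega_j)=\Psi_p^2(z)$. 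Since the chosen branch of $\Psi_p$ is a single-valued holomorphic function on the connected, $\Lambda_\tau$-invariant open set $U:=\mathbb{C}\setminus(\ell_p+\Lambda_\tau)$, both $\Psi_p(z)$ and its translate $\Psi_p(z+\omega_j)$ are holomorphic on $U$ and square to $\Psi_p^2$, so they agree up to a global constant $\varepsilon_j\in\{\pm1\}$; that is, $\ell_j^\ast\Psi_p(z)=\Psi_p(z+\omega_j)=\varepsilon_j\Psi_p(z)$.

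For the second step I would invoke the double-cover description of $\Psi_p$: since $\Psi_p^2$ has divisor $2\cdot(0)-(p)-(-p)$ on $E_\tau$, its square root $\Psi_p$ is a single-valued meromorphic function on the (genus-two) double cover $\pi\colon\widetilde{E}_\tau\to E_\tau$ branched at $\{+p,-p\}$, which is constructed concretely by cutting $E_\tau$ along $\ell_p$ and gluing two copies of the cut surface crosswise along the boundary. In this picture, a loop $\ell\subset E_\tau\setminus\{\pm p\}$ lifts to a closed loop on $\widetilde{E}_\tau$ (equivalently, carries trivial monodromy for $\Psi_p$) if and only if $\ell$ crosses the cut $\ell_p+\Lambda_\tau$ an even number of times. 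Since by hypothesis $\ell_j$ is disjoint from $\ell_p+\Lambda_\tau$, that number is zero and $\varepsilon_j=+1$, completing the proof.

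The main obstacle is precisely pinning down the sign $\varepsilon_j$. A purely algebraic approach via the transformation law of $\sigma$ reduces the question to an identical sign ambiguity in $\sqrt{\sigma(z-p+\omega_j)\sigma(z+p+\omega_j)}/\sqrt{\sigma(z-p)\sigma(z+p)}$, so the topological hypothesis on $\ell_j$ must enter somewhere; the double-cover picture above is the cleanest way to invoke it.
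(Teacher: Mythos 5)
Your first step is fine and coincides with the first half of the paper's argument: $\Psi_p^2$ is elliptic, $\Psi_p$ is single-valued on the connected set $\mathbb{C}\setminus(\ell_p+\Lambda_\tau)$, hence $\ell_j^\ast\Psi_p(z)=\Psi_p(z+\omega_j)=\varepsilon_j\Psi_p(z)$ with a constant $\varepsilon_j\in\{\pm1\}$. The gap is in your second step, and it is a genuine one: the claim that the Riemann surface of $\Psi_p$ is the double cover obtained by cutting $E_\tau$ along $\ell_p$ and gluing crosswise (equivalently, that the monodromy of $\Psi_p$ along a loop equals the parity of its crossing number with $\ell_p+\Lambda_\tau$) is not determined by the branch locus $\{p,-p\}$, and asserting it is the same as asserting the lemma. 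Indeed, a double cover of $E_\tau$ branched exactly at $\{p,-p\}$ corresponds to a homomorphism $\phi\colon H_1(E_\tau\setminus\{\pm p\};\mathbb{Z}/2)\to\mathbb{Z}/2$ with $\phi(\gamma_\pm)=1$, and there are four such covers, distinguished precisely by the values $\phi(\ell_1),\phi(\ell_2)$ --- i.e. by the signs $\varepsilon_1,\varepsilon_2$ you are trying to compute. Saying "the cover is the cut-and-glue model along $\ell_p$" is exactly the statement that $\Psi_p$ is single-valued on $E_\tau\setminus\ell_p$, i.e. $\varepsilon_1=\varepsilon_2=+1$, so the argument is circular. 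A concrete illustration that the branch locus alone cannot decide the signs: $G:=(\wp-e_1)\,\Psi_p^2$ has odd order exactly at $\pm p$ as well, yet $\sqrt{G}=\sqrt{\wp-e_1}\,\Psi_p$ has $\varepsilon_j=-1$ for at least one $j$, because $\sqrt{\wp-e_1}$ is unbranched on $E_\tau$ but cannot have trivial monodromy along both $\ell_1$ and $\ell_2$ (it would then be an elliptic function with a single simple pole). So two square roots with the same two branch points have different monodromy along the $\ell_j$'s, and your topological step cannot distinguish them.

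To close the gap you must use the specific function, not just its branch points: for instance show that $\frac{1}{2\pi i}\int_{\ell_j}d\log\Psi_p^2$ is even, or argue as the paper does --- keep $q_0$ and $\ell_j$ fixed, note that the sign $\varepsilon_j(p)$ is constant as $p$ shrinks to $0$ along $\ell_p$, and observe that in the limit $\Psi_p(z)\to\sigma(z)/\sqrt{\sigma(z)^2}\equiv\pm1$ along $\ell_j$, which is incompatible with the antiperiodicity $\Psi_p(z+\omega_j)=-\Psi_p(z)$; this degeneration in $p$ is how the paper pins down the sign.
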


\begin{proof}
We only need to prove (\ref{304-1}) in a small neighborhood $U$ of the base
point $q_{0}$. Since $\ell_{j}\in \pi_{1}\left( E_{\tau}\backslash \left \{
\pm \left[ p\right] \right \} ,q_{0}\right)$ does not intersect with $%
\ell_{p}+\Lambda_{\tau}$, $\Psi_{p}(z)$ can be viewed as a single-valued
meromorphic function in $\mathbb{C}\backslash(\ell_{p}+\Lambda_{\tau})$, and
in this region we have%
\begin{equation*}
\ell_{j}^{\ast}\Psi_{p}(z)=\Psi_{p}( z+\omega_{j}) =\pm \Psi _{p}(z),\text{ }%
z\in U\text{,}   \label{304}
\end{equation*}
because $\Psi_{p}(z)^{2}=\frac{\sigma(z)^{2}}{\sigma ( z+p) \sigma ( z-p) }$
is an elliptic function. Suppose
\begin{equation}
\Psi_{p}( z+\omega_{j})=-\Psi_{p}(z),\text{ }z\in U   \label{304-2}
\end{equation}
holds true. By fixing $q_{0}$ and $\ell_{j}$, (\ref{304-2}) always holds
true as $p\rightarrow0$ along $\ell_{p}$. Note from (\ref{61-3}) that for
any $z\in\ell_{j}$, $\lim_{p\rightarrow0}\Psi _{p}(z)$ is identical to
either $1$ or $-1$, but \eqref{304-2} implies that $\lim_{p\rightarrow0}\Psi
_{p}(z)$ for $z$ along $\ell_j$ contains both $1$ and $-1$, a contradiction.
\end{proof}

Classically, it has been known (cf. \cite{Whittaker-Watson}) that the second
symmetric product equation for any second order ODE play an important role.
Let $\tilde{y}_1(z)$, $\tilde{y}_2(z)$ are any two solutions of GLE$(\mathbf{%
n},p,A,\tau)$ and set $\Phi(z)=\tilde{y}_1(z)\tilde{y}_2(z).$ Then $\Phi(z)$
satisfies the following third order ODE:
\begin{equation}
\Phi^{\prime \prime \prime}(z)-4I_{\mathbf{n}}(z;p,A,\tau)\Phi^{%
\prime}(z)-2I_{\mathbf{n}}^{\prime}(z;p,A,\tau)\Phi(z)=0.   \label{303-1}
\end{equation}

Recall (\ref{304-111})-\eqref{304-12} that $y_1(z)$ is an common
eigenfunction and $y_2(z)=y_1(-z)$. Then $\Phi(z):=y_1(z)y_2(z)$ is a
solution of \eqref{303-1}, and also an \emph{even elliptic function} due to %
\eqref{304-111}-\eqref{304-12}. The following result was proved by
Takemura \cite{Takemura}, but we give a proof here for the convenience of
readers because it plays a fundamental role in our theory.

\begin{proposition}
\label{propp1} \cite{Takemura} The dimension of the space of even elliptic
solutions to (\ref{303-1}) is $1$.
\end{proposition}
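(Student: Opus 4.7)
The plan is to parametrize the full three-dimensional solution space of \eqref{303-1} by symmetric products of solutions of GLE$(\mathbf{n},p,A,\tau)$, and then cut that space down to one dimension by imposing ellipticity and evenness. Existence of one such solution is already in hand: the function $\Phi_{0}(z):=y_{1}(z)y_{1}(-z)=y_{1}(z)y_{2}(z)$ is a solution of \eqref{303-1} (as the product of two GLE solutions), is even by construction, and is elliptic thanks to \eqref{304-111} and \eqref{304-12}, which give $\ell_{j}^{\ast}\Phi_{0}=(\varepsilon_{j}y_{1})(\varepsilon_{j}^{-1}y_{2})=\Phi_{0}$ for $j=1,2$.

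For uniqueness, I would first assume the completely reducible case and take the basis $\{y_{1}^{2},\,y_{1}y_{2},\,y_{2}^{2}\}$ of the three-dimensional solution space of the symmetric product equation \eqref{303-1}, where $y_{2}(z)=y_{1}(-z)$. Writing an arbitrary solution as $\Phi=ay_{1}^{2}+by_{1}y_{2}+cy_{2}^{2}$, the analytic continuation along $\ell_{j}$ yields
\begin{equation*}
\ell_{j}^{\ast}\Phi \;=\; a\varepsilon_{j}^{2}y_{1}^{2}+by_{1}y_{2}+c\varepsilon_{j}^{-2}y_{2}^{2},
\end{equation*}
while the substitution $z\mapsto -z$ swaps $y_{1}^{2}\leftrightarrow y_{2}^{2}$ and fixes $y_{1}y_{2}$. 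Requiring $\Phi$ to be elliptic forces $a(\varepsilon_{j}^{2}-1)=0$ and $c(\varepsilon_{j}^{-2}-1)=0$ for each $j$; requiring $\Phi$ to be even forces $a=c$. Whenever there is some $j$ with $\varepsilon_{j}^{2}\neq 1$, both $a$ and $c$ must vanish, and the space of even elliptic solutions reduces to the line $\mathbb{C}\cdot y_{1}y_{2}$, as desired.

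The main obstacle is the degenerate situation in which $\varepsilon_{1}^{2}=\varepsilon_{2}^{2}=1$ together with $y_{1},y_{2}$ linearly independent, where the symmetry constraints alone leave a two-dimensional candidate space spanned by $y_{1}y_{2}$ and $y_{1}^{2}+y_{2}^{2}$; the not-completely-reducible case also has to be handled, though there $y_{1}$ and $y_{2}=y_{1}(-\cdot)$ become linearly dependent and one finds $y_{1}^{2}+y_{2}^{2}$ is a scalar multiple of $y_{1}y_{2}$. To rule out the degenerate completely-reducible subcase, I would exploit the apparent singularity condition \eqref{101} together with the local exponent data at $\pm p$. Concretely, the local exponents of \eqref{303-1} at $\pm p$ are $-1,1,3$, so any solution has at most a simple pole there; a direct computation of the residue of $y_{1}^{2}+y_{2}^{2}$ versus that of $y_{1}y_{2}$ at $z=p$ (using the non-logarithmic Frobenius expansion guaranteed by \eqref{101}) should produce the proportionality that collapses the candidate two-plane back to a line. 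Alternatively, since the monodromy constraint $(r,s)\notin\tfrac{1}{2}\mathbb{Z}^{2}$ of the paper's normalization in fact excludes $\varepsilon_{j}\in\{\pm1\}$ for both $j$, this sub-case is outside the range of parameters for which the proposition is invoked in later sections, and the generic argument above suffices. The technical heart of the proof is therefore the local analysis at $\pm p$ that pins down the relation between the residues of the two candidate even elliptic solutions.
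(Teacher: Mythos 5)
Your generic-case computation is correct: when $y_1$ and $y_2(z)=y_1(-z)$ are linearly independent, $\{y_1^2,\,y_1y_2,\,y_2^2\}$ is a basis of solutions of \eqref{303-1}, and if $\varepsilon_j^2\neq1$ for some $j$ the multiplier and parity constraints cut the even elliptic solutions down to $\mathbb{C}\cdot y_1y_2$. The genuine gap lies precisely in the two cases you defer. First, in the degenerate completely reducible case ($\varepsilon_1,\varepsilon_2\in\{\pm1\}$ with $y_1,y_2$ independent), no residue computation at $\pm p$ can ever show that $y_1^2+y_2^2$ is proportional to $y_1y_2$: under your standing assumption these two functions are linearly independent (they are part of your basis), and locally both have at most a simple pole at $\pm p$, so nothing is amiss there. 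The only way out is to show that this case cannot occur at all; but that statement is essentially Corollary \ref{coro11}, which the paper deduces \emph{from} Propositions \ref{propp1} and \ref{propp2}, and the constraint $(r,s)\notin\tfrac12\mathbb{Z}^2$ in \eqref{217} is likewise a consequence of Corollary \ref{coro11}, so invoking either here is circular. Nor can you restrict the proposition to "the parameters for which it is later invoked": it is applied in the proof of Proposition \ref{propp2} to an arbitrary completely reducible GLE, before any information on $(\varepsilon_1,\varepsilon_2)$ is available. Second, in the not completely reducible case $y_1$ and $y_2$ are proportional, so $\{y_1^2,y_1y_2,y_2^2\}$ spans only a line and your decomposition no longer covers the three-dimensional solution space of \eqref{303-1}; one must work with a basis $\{y_1^2,\,y_1u,\,u^2\}$ for a second, non-eigen solution $u$ (on which the monodromy acts by a non-diagonal triangular matrix) and exclude even elliptic combinations involving $u$, which your remark that $y_1^2+y_2^2$ is a multiple of $y_1y_2$ does not address.

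For comparison, the paper's proof sidesteps both difficulties by a single global contradiction: if the even elliptic space were $2$-dimensional it would coincide with the full $2$-dimensional even solution space of \eqref{303-1}; applying this to the squares of the two Frobenius solutions at $z=0$ (exponents $-n_0$ and $n_0+1$) and dividing by $\Psi_p$ from \eqref{61-3}, one finds that $\tilde y_1\tilde y_2$ is an odd elliptic function whose only pole is a simple pole at $z=0$, which is impossible. Some global argument of this type (parity plus pole counting for elliptic functions, or an equivalent) appears unavoidable exactly in order to dispose of the degenerate and non-completely-reducible cases, so as it stands your proposal does not yet yield the proposition.
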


\begin{proof}
It is easy to see that the dimension of the space of even solutions to %
\eqref{303-1} is $2$. Suppose the proposition is not true. Since we already
know there is at least one even elliptic solution, the dimension of even
elliptic solutions to \eqref{303-1} is $2$, which implies any even solution
of \eqref{303-1} must be elliptic.

Since the local exponents of GLE$(\mathbf{n},p,A,\tau)$ at $0$ are $-n_0,
n_0+1$, and $I_{\mathbf{n}}(\cdot;p,A,\tau)$ is even, there are local
solutions of the following form at $0$:
\begin{equation*}
\hat{y}_{1}(z)=z^{-n_{0}}\bigg(1+\sum_{j=1}^{\infty}a_{j}z^{2j}\bigg)  ,%
\text{ \ }\hat{y}_{2}(z) =z^{n_{0}+1 }\bigg(1+\sum_{j=1}^{\infty}b_{j}z^{2j}%
\bigg)  .
\end{equation*}
Then $\hat{y}_j(z)^2$, $j=1,2$, are even solutions of \eqref{303-1} and
hence even elliptic functions by our assumption. Define
\begin{equation}  \label{2-15}
\tilde{y}_j(z):=\frac{\hat{y}_j(z)}{\Psi _{p}(z)},\quad j=1,2,
\end{equation}
where $\Psi_{p}(z)$ is given by (\ref{61-3}). Then $\tilde{y}_j(z)$ is a
meromorphic function with poles at most at $E_{\tau}[2]$. Since $\hat{y}%
_j(z)^2$ and $\Psi_{p}(z)^2$ are even elliptic, so do $\tilde{y}_j(z)^2$ for
$j=1,2$. Assume
\begin{equation*}
\begin{pmatrix}
\tilde{y}_{1}( z+\omega_{i}) \\
\tilde{y}_{2}( z+\omega_{i})%
\end{pmatrix}
=
\begin{pmatrix}
a_i & b_i \\
c_i & d_i%
\end{pmatrix}
\begin{pmatrix}
\tilde{y}_{1}( z) \\
\tilde{y}_{2}( z)%
\end{pmatrix}%
,\quad i=1,2
\end{equation*}
for some $%
\begin{pmatrix}
a_i & b_i \\
c_i & d_i%
\end{pmatrix}%
\in SL(2,\mathbb{C})$. Then%
\begin{align*}
\tilde{y}_{1}(z) ^{2} =\tilde{y}_{1}( z+\omega _{i})^{2}=a_i^{2}\tilde{y}%
_{1}( z) ^{2}+2a_ib_i\tilde{y}_{1}(z) \tilde{y}_{2}(z) +b_i^{2}\tilde{y}%
_{2}(z)^{2}.
\end{align*}
So we have $a_i^{2}=1$, $b_i=0$. Together with $a_id_i-b_ic_i=1$, we see
that $d_i=a_i=:\varepsilon_{i}\in \{ \pm1\}$. Similarly, we could use $%
\tilde{y}_2$ to obtain $c_i=0.$ Hence
\begin{equation*}
\tilde{y}_{k}(z+\omega_{i}) =\varepsilon_{i}\tilde{y}_{k}( z) \text{ for }%
k=1,2\text{ and }i=1,2.
\end{equation*}
This implies that $\tilde{y}_{1}(z)\tilde{y}_{2}( z)$ is odd and elliptic.
By the local exponent of $\tilde{y}_{1}(z) \tilde{y}_{2}( z)$ at $\frac{%
\omega_{k}}{2}$ being one of $\{-2n_k,1, 2n_k+2\}$ for $k=1,2,3$, we see
that $\frac{\omega_{k}}{2}$ is a simple zero of $\tilde{y}_{1}(z) \tilde{y}%
_{2}(z)$ for $k=1,2,3$. Thus the elliptic function $\tilde{y}_{1}(z)\tilde{y}%
_{2}(z)$ has only a simple pole at $z=0$, a contradiction. This proves the
proposition.
\end{proof}

Now we can apply Proposition \ref{propp1} to answer the question proposed
earlier.

\begin{proposition}
\label{propp2} Let $y_1(z)$ be a common eigenfunction of the monodromy
representation of GLE$(\mathbf{n},p,A,\tau)$. Then GLE$(\mathbf{n},p,A,\tau)$
is completely reducible if and only if $y_1(z)$ and $y_2(z):=y_1(-z)$ are
linearly independent.
\end{proposition}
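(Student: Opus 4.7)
My plan is to prove the two implications separately. The backward implication is immediate: both $y_1(z)$ and $y_2(z) = y_1(-z)$ are common eigenfunctions by \eqref{304-111} and \eqref{304-12} respectively, so their linear independence directly provides the two linearly independent common eigenfunctions demanded by complete reducibility.

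For the forward direction, suppose GLE$(\mathbf{n},p,A,\tau)$ is completely reducible and, aiming for a contradiction, suppose further that $y_1(-z) = c\,y_1(z)$ for some constant $c$. Applying $z\mapsto -z$ forces $c^2 = 1$, and comparing the relations \eqref{304-111} and \eqref{304-12} under this proportionality yields $\varepsilon_i = \varepsilon_i^{-1}$, so $\varepsilon_i \in \{+1,-1\}$ for $i=1,2$. Since the monodromy lies in $SL(2,\mathbb{C})$ and complete reducibility furnishes a second common eigenfunction $\tilde{y}$ linearly independent from $y_1$, its eigenvalue $\tilde\varepsilon_i$ must satisfy $\varepsilon_i\tilde\varepsilon_i = \det M_i = 1$, giving $\tilde\varepsilon_i = \varepsilon_i$. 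Consequently $M_i = \varepsilon_i I_2$ is scalar, so \emph{every} solution of GLE is a common eigenfunction.

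The next step is to exploit Proposition \ref{propp1} to derive a contradiction. Because $I_{\mathbf{n}}(z;p,A,\tau)$ is even in $z$, the map $y(z)\mapsto y(-z)$ preserves the solution space, so the even and odd parts of any solution are themselves solutions. The Frobenius exponents at $z=0$ are $-n_0$ and $n_0+1$, which differ by an odd integer, so the two local Frobenius solutions have opposite parities and GLE admits, up to scalar, a unique even solution $y_{\mathrm{ev}}$ and a unique odd solution $y_{\mathrm{od}}$. Then $y_{\mathrm{ev}}^2$ and $y_{\mathrm{od}}^2$ both solve the symmetric product equation \eqref{303-1}, both are even, both are elliptic (since $\varepsilon_i^2 = 1$), and they are linearly independent (any proportionality would force $y_{\mathrm{ev}} \propto y_{\mathrm{od}}$). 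This exhibits a $2$-dimensional space of even elliptic solutions of \eqref{303-1}, contradicting Proposition \ref{propp1}. The hard part is precisely this degenerate scenario where $M_i$ collapses to $\pm I_2$; absent the sharp one-dimensionality supplied by Proposition \ref{propp1}, the proportionality $y_1(-z) = c\,y_1(z)$ could not be ruled out.
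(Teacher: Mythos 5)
Your proof is correct, but the forward direction takes a genuinely different route from the paper's. The paper argues directly: complete reducibility gives a second common eigenfunction $y_3$ linearly independent from $y_1$, the product $y_3(z)y_3(-z)$ is an even elliptic solution of \eqref{303-1}, so Proposition \ref{propp1} forces $y_1(z)y_1(-z)=c\,y_3(z)y_3(-z)$, whence $y_3(z)=c'\,y_1(-z)$ and the linear independence of $y_1(z)$ and $y_1(-z)$ follows immediately. You instead argue by contradiction: dependence of $y_1(z)$ and $y_1(-z)$ forces $\varepsilon_i=\varepsilon_i^{-1}$, the determinant condition then makes $M_1,M_2$ scalar, and the parity splitting of the local solutions at $z=0$ (the potential is even and the exponents $-n_0$, $n_0+1$ differ by an odd integer) produces two linearly independent even elliptic solutions $y_{\mathrm{ev}}^2$, $y_{\mathrm{od}}^2$ of \eqref{303-1}, contradicting Proposition \ref{propp1}. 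Both arguments hinge on Proposition \ref{propp1}; the paper's is shorter but leaves implicit the step from $y_1(z)y_1(-z)=c\,y_3(z)y_3(-z)$ to $y_3\propto y_1(-\cdot)$ (a zero-comparison or Wronskian argument), which your route avoids entirely. The price is that you redo local analysis already contained in the proof of Proposition \ref{propp1} (the even power-series Frobenius forms at $z=0$, which also justify the absence of logarithms that your parity claim tacitly uses), and you need $\rho_\tau(\gamma_\pm)=-I_2$ from \eqref{89-2} so that the squares are genuinely single-valued elliptic functions. A small bonus of your version is that it exhibits the degenerate scenario as having monodromy $\pm I_2$, consistent with Corollary \ref{coro11}.
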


\begin{proof}
Clearly $y_2(z)$ is also a common eigenfunction. So the sufficient part is
trivial. For the necessary part, since the monodromy is completely
reducible, there exists another common eigenfunction $y_3(z)$ which is
linearly independent with $y_1(z)$. Then $y_3(z)y_3(-z)$ is also an even
elliptic solution of \eqref{303-1}. So Proposition \ref{propp1} implies $%
y_1(z)y_1(-z)=y_3(z)y_3(-z)$ up to a constant, which implies $y_3(z)=cy_1(-z)
$ for some constant $c\neq 0$. This proves that $y_1(z)$ and $y_2(z)=y_1(-z)$
are linearly independent.
\end{proof}

Recall \eqref{304-111} and \eqref{304-12} that $\varepsilon_j,
\varepsilon_j^{-1}$ are the eigenvalues of $M_j=\rho_{\tau}(\ell_j)$. Then
Propositions \ref{propp1} and \ref{propp2} have the following consequence.

\begin{corollary}
\label{coro11}If GLE$(\mathbf{n},p,A,\tau)$ is completely reducible then $%
(\varepsilon_1,\varepsilon_2)\notin\{\pm(1,1),\pm(1,-1)\}$.
\end{corollary}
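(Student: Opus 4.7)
The plan is to argue by contradiction. Suppose GLE$(\mathbf{n},p,A,\tau)$ is completely reducible while $(\varepsilon_1,\varepsilon_2)\in\{\pm(1,1),\pm(1,-1)\}$, so that $\varepsilon_j\in\{\pm 1\}$ and $\varepsilon_j=\varepsilon_j^{-1}$ for $j=1,2$. By Proposition \ref{propp2}, the two common eigenfunctions $y_1(z)$ and $y_2(z):=y_1(-z)$ are then linearly independent.

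The key observation is that in this sign regime every pairwise product $y_i(z)y_j(z)$ with $i,j\in\{1,2\}$ is elliptic: for each $k\in\{1,2\}$,
\begin{equation*}
y_1(z+\omega_k)^2=\varepsilon_k^2\,y_1(z)^2=y_1(z)^2,\qquad y_1(z+\omega_k)y_2(z+\omega_k)=\varepsilon_k\varepsilon_k^{-1}\,y_1(z)y_2(z)=y_1(z)y_2(z),
\end{equation*}
and similarly for $y_2(z)^2$. Because each such product is a solution of the symmetric-square equation \eqref{303-1}, I would form the two \emph{even} elliptic solutions $F_1(z):=y_1(z)^2+y_2(z)^2$ and $F_2(z):=y_1(z)y_2(z)$; evenness follows from $y_2(z)=y_1(-z)$ upon swapping $z\to -z$.

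Next I would rule out the degenerate cases: $F_2\equiv 0$ would force $y_1\equiv 0$, and $F_1\equiv 0$ would give $y_1=\pm i\,y_2$, each contradicting the linear independence of $y_1,y_2$. Thus $F_1$ and $F_2$ are both nontrivial. Proposition \ref{propp1} then supplies the decisive input, asserting that the space of even elliptic solutions to \eqref{303-1} is one-dimensional; hence there exists $c\in\mathbb{C}$ with $F_1=cF_2$. This identity rewrites as $y_1(z)^2-c\,y_1(z)y_2(z)+y_2(z)^2=0$, which, viewed as a quadratic in the meromorphic function $y_1/y_2$, shows that $y_1/y_2$ is constant, contradicting the independence of $y_1$ and $y_2$.

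The whole argument reduces to Proposition \ref{propp1}, so there is no substantive obstacle. The only conceptual point is the recognition that $\varepsilon_j=\pm 1$ on both periods is exactly what makes the three products $y_1^2$, $y_1y_2$, $y_2^2$ simultaneously elliptic, producing \emph{two} candidate even elliptic solutions of \eqref{303-1} and thereby activating the one-dimensionality provided by Proposition \ref{propp1}.
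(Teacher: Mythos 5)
Your argument is correct and is essentially the derivation the paper intends: Corollary \ref{coro11} is stated there without a written proof, as a direct consequence of Propositions \ref{propp1} and \ref{propp2}, and your proof realizes exactly that implication — complete reducibility gives the linear independence of $y_1$ and $y_2(z)=y_1(-z)$ via Proposition \ref{propp2}, while $\varepsilon_1,\varepsilon_2\in\{\pm1\}$ makes $y_1^2+y_2^2$ a second nontrivial even elliptic solution of \eqref{303-1}, which collides with the one-dimensionality in Proposition \ref{propp1}. Your closing step (the proportionality $y_1^2+y_2^2=c\,y_1y_2$ forcing the meromorphic function $y_1/y_2$ to take finitely many values, hence be constant) is a clean way to finish, and the ellipticity/meromorphy of the squares across $\pm p$ is justified at the same level of detail the paper uses for $\Phi=y_1y_2$.
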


Suppose GLE$(\mathbf{n},p,A,\tau)$ is completely reducible and $y_j(z),j=1,2$%
, are its linearly independent common eigenfunctions in Proposition \ref%
{propp2}. then the monodromy matrice $M_j$ can be written as
\begin{equation}  \label{eq216}
M_1=\left(%
\begin{matrix}
e^{-2\pi is} & 0 \\
0 & e^{2\pi is}%
\end{matrix}%
\right)\text{ and }M_2=\left(%
\begin{matrix}
e^{2\pi ir} & 0 \\
0 & e^{-2\pi ir}%
\end{matrix}%
\right),
\end{equation}
where $(r,s)\in\mathbb{C}^2$ satisfies $(\varepsilon_1,\varepsilon_2)=(e^{-2\pi is},e^{2\pi i r})$. We call that $(r,s)$ are the monodromy data of
GLE$(\mathbf{n},p,A,\tau)$ if it is completely reducible, and Corollary \ref%
{coro11} implies
\begin{equation}  \label{217}
(r,s)\notin\tfrac{1}{2}\mathbb{Z}^2.
\end{equation}
Later we will show how to compute $(r,s)$ from the zero set of the common
eigenfunction $y_1(z)$.

The above argument shows that \eqref{303-1} has a unique even elliptic
solution $\Phi(z)$ up to a constant, which is given by $\Phi(z)=y_1(z)y_2(z)$%
, where $y_1(z)$ is a common eigenfunction and $y_2(z)=y_1(-z)$.
Furthermore, GLE$(\mathbf{n},p,A,\tau)$ is completely reducible if and only
if $y_1(z)$ and $y_2(z)$ are linearly independent. This is where the even
elliptic solution $\Phi$ plays the role which is of fundamental importance
for GLE$(\mathbf{n},p,A,\tau)$. To determine the linear independence, we
should consider the Wronskian of $y_1(z)$ and $y_2(z)$: $W:=y_1(z)y_2^{%
\prime }(z)-y_1^{\prime }(z)y_2(z)$, which is a constant independent of $z$.

\begin{lemma}
\label{lemmm} Let $\Phi(z)=y_1(z)y_2(z)$ be the even elliptic solution of %
\eqref{303-1} and $W$ is the Wronkian of $y_1$ and $y_2$. Then
\begin{equation}  \label{eq218}
W^2=\Phi'(z)^{ 2}-2\Phi^{\prime \prime }(z)\Phi(z)+4I_{\mathbf{n}}(z;
p,A,\tau)\Phi(z)^2,
\end{equation}
\begin{equation}  \label{eq219}
y_1(z)=\sqrt{\Phi(z)}\exp\int^z\frac{W}{2\Phi(\xi)}d\xi
\end{equation}
\end{lemma}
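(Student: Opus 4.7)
The plan is to prove both identities by direct computation, exploiting the fact that $y_1$ and $y_2$ each satisfy $y'' = I_{\mathbf{n}}(z;p,A,\tau)\, y$, together with $\Phi = y_1 y_2$.

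For the first identity, I would first differentiate $\Phi = y_1 y_2$ twice and use the ODE to replace the second derivatives:
\begin{equation*}
\Phi' = y_1' y_2 + y_1 y_2', \qquad \Phi'' = y_1'' y_2 + 2 y_1' y_2' + y_1 y_2'' = 2 I_{\mathbf{n}}\Phi + 2 y_1' y_2'.
\end{equation*}
Next I would form the squares
\begin{equation*}
(\Phi')^2 = (y_1' y_2)^2 + 2 y_1 y_1' y_2 y_2' + (y_1 y_2')^2, \qquad W^2 = (y_1 y_2' - y_1' y_2)^2 = (y_1' y_2)^2 - 2 y_1 y_1' y_2 y_2' + (y_1 y_2')^2,
\end{equation*}
so that $(\Phi')^2 - W^2 = 4 y_1 y_1' y_2 y_2' = 4\Phi \cdot (y_1' y_2')$. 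Substituting $2 y_1' y_2' = \Phi'' - 2 I_{\mathbf{n}}\Phi$ gives $(\Phi')^2 - W^2 = 2\Phi\Phi'' - 4 I_{\mathbf{n}}\Phi^2$, which rearranges to \eqref{eq218}. As a sanity check, this also recovers the fact (already noted after \eqref{eq24}) that the right-hand side of \eqref{eq218} is independent of $z$, since $W$ is a constant.

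For the second identity, since $y_1 y_2 = \Phi$ I would write $y_2 = \Phi/y_1$ and compute
\begin{equation*}
W = y_1 y_2' - y_1' y_2 = y_1 \left(\frac{\Phi'}{y_1} - \frac{\Phi y_1'}{y_1^2}\right) - \frac{y_1' \Phi}{y_1} = \Phi' - 2\Phi \frac{y_1'}{y_1}.
\end{equation*}
Solving for the logarithmic derivative gives $(\log y_1)' = \Phi'/(2\Phi) - W/(2\Phi)$ (with the opposite sign if one adopts the Wronskian convention $W = y_1'y_2 - y_1 y_2'$, which reconciles the sign in the formula as stated). Integrating and exponentiating then yields $y_1(z) = \sqrt{\Phi(z)}\,\exp\!\int^{z} \tfrac{W}{2\Phi(\xi)}\, d\xi$ up to a multiplicative constant absorbed into the choice of base point, which is \eqref{eq219}.

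There is no genuine obstacle here; the proof is a two-line algebraic manipulation plus an elementary integration of a first-order linear ODE. The only subtle point worth flagging is the sign convention for $W$ (which of $y_1 y_2' - y_1' y_2$ versus $y_1' y_2 - y_1 y_2'$ one takes), and the fact that the integral in \eqref{eq219} should be understood with a suitable choice of branch of $\sqrt{\Phi}$ and base point, consistent with the normalization of the common eigenfunction $y_1(z)$ appearing in \eqref{304-111}.
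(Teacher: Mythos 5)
Your proof is correct and takes essentially the same route as the paper: the paper likewise writes $y_1'/y_1=(\Phi'\pm W)/(2\Phi)$ and integrates to obtain \eqref{eq219}, and it gets \eqref{eq218} by adding the two Riccati identities $\left(y_j'/y_j\right)'+\left(y_j'/y_j\right)^2=I_{\mathbf{n}}$, which is just a repackaging of your computation $(\Phi')^2-W^2=4\Phi\,y_1'y_2'=2\Phi\Phi''-4I_{\mathbf{n}}\Phi^2$. Your remark on the sign convention is apt---the paper states $W=y_1y_2'-y_1'y_2$ before the lemma but uses $W=y_1'y_2-y_1y_2'$ inside its proof; since only $W^2$ enters elsewhere and $y_1,y_2$ are later renamed by fixing the Wronskian (Remark \ref{remarkkk}), this discrepancy is harmless.
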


\begin{proof}
Since $\Phi(z)=y_1(z)y_2(z)$ and $W=y_1^{\prime }y_2-y_1y_2^{\prime }$, we
have
\begin{equation*}
\frac{y_1^{\prime }}{y_1}=\frac{\Phi^{\prime }+W}{2\Phi},\quad\frac{%
y_2^{\prime }}{y_2}=\frac{\Phi^{\prime }-W}{2\Phi}
\end{equation*}
which implies \eqref{eq219} and
\begin{equation*}
\frac{\Phi^{\prime \prime }}{2\Phi}-\frac{\Phi^{\prime }+W}{2\Phi^2}%
\Phi^{\prime }=\left(\frac{y_1^{\prime }}{y_1}\right)^{\prime }=\frac{%
y_1^{\prime \prime }}{y_1}-\left(\frac{y_1^{\prime }}{y_1}\right)^2=I_{\mathbf{n}}-\left(%
\frac{\Phi^{\prime }+W}{2\Phi}\right)^2,
\end{equation*}
\begin{equation*}
\frac{\Phi^{\prime \prime }}{2\Phi}-\frac{\Phi^{\prime }-W}{2\Phi^2}%
\Phi^{\prime }=I_{\mathbf{n}}-\left(\frac{\Phi^{\prime }-W}{2\Phi}\right)^2.
\end{equation*}
Adding these two formulas together, we easily obtain (\ref{eq218}).
\end{proof}

To normalize the even elliptic solution $\Phi(z)$, we apply the following result due
to Takemura \cite{Takemura}.\medskip

\noindent \textbf{Theorem 2.A.} (\cite{Takemura}) \emph{Fix $\tau \in
\mathbb{H}$ and $p\not \in E_{\tau}[2]$. Then equation (\ref{303-1}) has a
unique even elliptic solution $\Phi_{e}(z;A)$ of the form
\begin{equation}
\Phi_{e}(z;A)=C_{0}(A)
+\sum_{k=0}^{3}\sum_{j=0}^{n_{k}-1}b_{j}^{(k)}(A)\wp(z+\tfrac{\omega_{k}}{2}%
)^{n_{k}-j}+\frac{d(A)}{\wp(z)-\wp (p)},  \label{5}
\end{equation}
such that the coefficients $C_{0}(A)$, $b_{j}^{(k)}(A)$ and $d(A)$ are all
polynomials in $A,$ and they do not have common zeros, and the leading
coefficient of $C_{0}(A)$ is $\frac{1}{2}$. Moreover, }%
\begin{equation*}
\deg_{A}C_{0}(A)>\max \left( \deg_{A}b_{j}^{(k)}(A),\deg_{A}d(A)\right) .
\end{equation*}

The proof of Theorem 2.A is not difficult but a little tedious in
computation. The expression of \eqref{5} is due to the fact that $\frac{%
\omega_k}{2}$ and $\pm p$ might be poles of $\Phi(z)$ with order $-2n_k$ and
$-1$ respectively (depending on $A$). We substitute \eqref{5} into GLE$(%
\mathbf{n},p,A,\tau)$ and compare $A$ of the two sides at each singularity.
Then we could obtain that all the coefficients are polynomials in $A$ after normalization. For
details, we refer the readers to \cite{Takemura,Takemura1}.

Recalling Proposition \ref{propp1}, we apply Lemma \ref{lemmm} to the normalized $\Phi_e(z;A)$ of Theorem
2.A. Then we have the main result of this section.

\begin{theorem}\label{thm2-7}
There is a monic polynomial $Q_{\mathbf{n,}p}(A)=Q_{\mathbf{n,}p}(A;\tau)$
in $A$ such that the Wronskian of $y_1(z;A)$, $y_2(z;A)=y_1(-z;A)$, where $%
\Phi_e(z;A)=y_1(z;A)y_2(z;A)$, satisfies
\begin{equation}
W^2=Q_{\mathbf{n,}p}(A).
\end{equation}
Moreover, $Q_{\mathbf{n,}p}(A)\neq 0$ if and only if GLE$(\mathbf{n}%
,p,A,\tau)$ is completely reducible.
\end{theorem}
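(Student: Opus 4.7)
The plan is to substitute the normalized solution $\Phi_e(z;A)$ from Theorem~2.A into the identity of Lemma~\ref{lemmm}, and then read off polynomiality, monicness, and the non-vanishing criterion for $Q_{\mathbf{n},p}$ directly from the $A$-degree information supplied by Theorem~2.A together with Proposition~\ref{propp2}.

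First I would write, via Lemma~\ref{lemmm},
$$W^2 = \Phi_e'(z;A)^2 - 2\Phi_e''(z;A)\Phi_e(z;A) + 4 I_{\mathbf{n}}(z;p,A,\tau)\,\Phi_e(z;A)^2.$$
The left-hand side is the squared Wronskian of two solutions of GLE$(\mathbf{n},p,A,\tau)$ and is therefore independent of $z$, so the right-hand side is likewise $z$-independent and defines a function of $A$ alone; call it $Q_{\mathbf{n},p}(A)$. By Theorem~2.A the coefficients $C_0(A)$, $b_j^{(k)}(A)$ and $d(A)$ of $\Phi_e$ are polynomials in $A$, while $I_{\mathbf{n}}(z;p,A,\tau)$ is at most quadratic in $A$ (the $A^2$ term coming from $B$ in \eqref{101}). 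Hence the right-hand side is a polynomial in $A$ whose coefficients are, a priori, meromorphic functions of $z$; the $z$-independence forces each such coefficient to be a constant, so $Q_{\mathbf{n},p}(A)\in\mathbb{C}[A]$.

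To verify that $Q_{\mathbf{n},p}(A)$ is monic, I would compute the leading term in $A$. Set $m:=\deg_A C_0(A)$. Theorem~2.A gives $C_0(A)=\tfrac12 A^m+O(A^{m-1})$ together with $\deg_A b_j^{(k)}<m$ and $\deg_A d<m$, so $\Phi_e(z;A)=\tfrac12 A^m+O(A^{m-1})$ uniformly in $z$ (away from its poles). The crucial observation is that the $A^m$-coefficient of $\Phi_e$ is \emph{constant in $z$}, so it is killed by differentiation; hence $\Phi_e'(z;A)$ and $\Phi_e''(z;A)$ have $A$-degree at most $m-1$. Combined with $I_{\mathbf{n}}(z;p,A,\tau)=A^2+O(A)$, the dominant contribution is
$$4I_{\mathbf{n}}(z;p,A,\tau)\,\Phi_e(z;A)^2 = A^{2m+2}+O(A^{2m+1}),$$
while the other two terms are $O(A^{2m-1})$. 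This yields $Q_{\mathbf{n},p}(A)=A^{2m+2}+O(A^{2m+1})$, i.e. monic of degree $2\deg_A C_0(A)+2$.

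For the final equivalence, I would simply note that $W\neq 0$ if and only if $y_1(z;A)$ and $y_2(z;A)=y_1(-z;A)$ are linearly independent, which by Proposition~\ref{propp2} is equivalent to complete reducibility of GLE$(\mathbf{n},p,A,\tau)$; thus $Q_{\mathbf{n},p}(A)=W^2\neq 0$ is the desired criterion. I do not foresee any serious obstacle: the whole argument is essentially bookkeeping on top of Theorem~2.A, and the only delicate point is the leading-term computation, in particular the fact that the top $A$-coefficient of $\Phi_e$ is $z$-independent, which is exactly what makes the two derivative terms strictly subleading and pins down the degree and leading coefficient of $Q_{\mathbf{n},p}$.
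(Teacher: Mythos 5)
Your proposal is correct and follows essentially the same route as the paper: insert the normalized $\Phi_e$ of Theorem 2.A into the identity of Lemma \ref{lemmm}, use the $z$-independence of $W$ to conclude the coefficients in $A$ are constants, read off monicness of degree $2\deg_A C_0(A)+2$ from the leading term of $4I_{\mathbf{n}}\Phi_e^2$, and invoke Proposition \ref{propp2} for the complete-reducibility criterion. Your explicit remark that the top $A$-coefficient of $\Phi_e$ is $z$-independent (so the derivative terms are strictly subleading) is just a more detailed version of the paper's "we easily obtain" step.
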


\begin{proof}
By (\ref{5031}) and (\ref{101}), we could write $I(z;\mathbf{n},p,A,\tau)$
as
\begin{equation*}
I_{\mathbf{n}}(z;p,A,\tau) =A^{2}+I_{1}(z) A+I_{2}(z).
\end{equation*}
By Theorem 2.A, we have%
\begin{equation*}
\Phi_{e}(z)=\frac{1}{2}A^g+\sum_{j=0}^{g-1}\varphi_{j}(z)A^{j},\quad \text{%
where}\;\;g:=\deg C_0(A),
\end{equation*}
Inserting these into \eqref{eq218}, we easily obtain
\begin{align}
W^{2} =A^{2g+2}+\sum_{j=0}^{2g+1}q_{j}(z) A^{j},
\end{align}
where $q_{j}(z)\equiv c_{j}$ are independent of $z$ for all $j$ because $W$
is independent of $z$. Thus, $W^2=Q_{\mathbf{n},p}(A)$ is a monic polynomial
in $A$ of degree $2g+2$.
\end{proof}

In view of Theorem \ref{thm2-7}, we define the hyperelliptic curve $\Gamma_{\mathbf{n,%
}p}=\Gamma_{\mathbf{n,}p}(\tau)$ by
\begin{equation}
\Gamma_{\mathbf{n,}p}(\tau):=\{(A,W)|W^2=Q_{\mathbf{n,}p}(A;\tau)\}.
\end{equation}
We remark that the polynomials $Q_{\mathbf{n,}p}(A;\tau)$ might have
multiple zeros. But we could prove that for each $\tau$, $Q_{\mathbf{n,}%
p}(A;\tau)$ has distinct roots except for finitely many $p\in
E_\tau\setminus E_\tau[2]$. See e.g. our subsequent work \cite{CKL-pre}.

Since $\deg_{A}Q_{\mathbf{n,}p}(A;\tau)  $ is even, the curve
$\Gamma_{\mathbf{n,}p}(\tau)  $
has two points at infinity which are denoted by $\infty_{\pm}$. Indeed, for a curve in
$\mathbb{C}^{2}$ defined by $y^{2}=\Pi_{i=1}^{2g+2}(x-x_{i})  $,
to study its points at infinity, we let
$x=1/x^{\prime}$ and $y=y^{\prime}/x^{\prime g+1}$. Then the equation becomes
$
y^{\prime2}=\prod_{i=1}^{2g+2}(  1-x^{\prime}x_{i})  .
$
Thus $(  x^{\prime},y^{\prime})=(  0,\pm1)  $
represents the two points at infinity and they are unramified. Hence
\begin{equation}\label{eqgamma}\overline{\Gamma_{\mathbf{n,}%
p}(  \tau )  }=\Gamma_{\mathbf{n,}p}(  \tau )
\cup \{  \infty_{\pm} \}  \;\,\text{is smooth at}\;\,\infty_{\pm}.\end{equation}

\begin{remark}\label{remarkkk}
Given $(A,W)\in\Gamma_{n,p}$, the unique $\Phi_e(z;A)$ is the product of $%
y_1(z;A)$ and $y_2(z;A)=y_1(-z;A)$. If $W\neq 0$, then $y_1(z;A)$ and $%
y_2(z;A)$ are linearly independent. We rename $y_1(z;A), y_2(z;A)$ by
requiring that the Wronskian of $y_1$ and $y_2$ is equal to $W$ (i.e. the
Wronskian of $y_2$ and $y_1$ is $-W$). Then $y_1(z;A)$ is unique up to a
sign, and will be denoted by $y_1(z;A,W)$. In particular, the zero set of $%
y_1(z;A,W)$ is unique.
\end{remark}

\section{The embedding of $\Gamma_{\mathbf{n,}p}$ in ${Sym}^NE_{\protect\tau}
$}

\label{embedding}

The main purpose of this section is to define the addition map from $\Gamma_{%
{\mathbf{n}, p}}$ to $E_\tau$. First we discuss the embedding of $\Gamma_{%
\mathbf{n,}p}$ into Sym$^NE_\tau$, the symmetric $N$-th copy product of $%
E_\tau$, where $N=\sum_{k=0}^{3}n_{k}+1$ in the sequel.

For any $\mathbf{a}=( a_{1},\cdot \cdot \cdot ,a_{N})\in
\mathbb{C}^{N}$, we define $y_{{\bf a},c}( z)=y_{{\bf a},c}( z;p)$ by
\begin{equation}
y_{{\bf a},c}( z;p) :=\frac{e^{cz}\prod_{i=1}^{N}\sigma (
z-a_{i}) }{\sqrt{\sigma ( z-p) \sigma ( z+p) }%
\prod_{k=0}^{3}\sigma (z-\frac{\omega _{k}}{2})^{n_{k}}}\text{ for }%
c\in \mathbb{C}.  \label{exp1}
\end{equation}

\begin{proposition}\label{propp3}
Fix $p \in E_{\tau }\setminus E_{\tau}[2]$ and $A\in \mathbb{C}.$ Let $y_1(z;A,W)$ be the common eigenfunction determined in Remark \ref{remarkkk}. Then there
always exists ${\mathbf a}\in \mathbb{C}^{N}$ and $c\in \mathbb{C}$ such that $%
y_1(z;A,W)=y_{{\mathbf a},c}(z;p)$ up to a constant.
\end{proposition}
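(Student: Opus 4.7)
The plan is to multiply $y_1(z;A,W)$ by an auxiliary function that cancels its branch points at $\pm p$ and its possible poles at the half-periods $\omega_k/2$, so the product becomes an \emph{entire theta function} of order $N$ on $\mathbb{C}$, which then admits a standard factorization via $\sigma$-functions. Concretely, I would set
\[
g(z):=y_1(z;A,W)\cdot\sqrt{\sigma(z-p)\sigma(z+p)}\cdot\prod_{k=0}^{3}\sigma\bigl(z-\tfrac{\omega_k}{2}\bigr)^{n_k},
\]
where the branch of the square root is fixed so that $\sigma(z)/\sqrt{\sigma(z-p)\sigma(z+p)}=\Psi_p(z)$ as in \eqref{61-3}. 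The first step is to verify that $g$ is entire: at $\omega_k/2$ the local exponent of $y_1$ is $-n_k$ or $n_k+1$, so the factor $\sigma(z-\omega_k/2)^{n_k}$ absorbs any pole; at $\pm p$, $y_1\sim(z\mp p)^{\mp 1/2}(\text{analytic})$ while the square root contributes $(z\mp p)^{1/2}(\text{analytic})$, and since both factors change sign under analytic continuation around $\pm p$, the product is single-valued and analytic there.

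Next I would combine \eqref{123123}, \eqref{304-111}, and Lemma \ref{lem2-6} to compute the quasi-periodicity of $g$. A direct calculation gives
\[
g(z+\omega_j)=C_j\,e^{N\eta_j z}g(z),\quad j=1,2,
\]
with $N=\sum_{k=0}^{3}n_k+1$ and explicit constants $C_j$ depending on $\varepsilon_j$, $\eta_j$, and the half-periods. Thus $g$ is an entire theta function of order $N$. By the standard theory of theta functions on $E_\tau$, $g$ has exactly $N$ zeros $a_1,\ldots,a_N$ in a fundamental domain (counted with multiplicity); the quotient $g(z)/\prod_{i=1}^N\sigma(z-a_i)$ is entire, nowhere zero, and has quasi-periods of pure exponential type, hence equals $Ke^{cz}$ for constants $K$ and $c$, where $c$ and $\sum_i a_i$ are uniquely determined modulo $\Lambda_\tau$ by Legendre's relation applied to the two quasi-periodicity equations. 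Solving back for $y_1$ from the definition of $g$ yields $y_1(z;A,W)=K\cdot y_{\mathbf{a},c}(z;p)$, which is the claim.

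The main technical point will be in the first step, namely tracking the branch of $\sqrt{\sigma(z-p)\sigma(z+p)}$ both across the cut $\ell_p+\Lambda_\tau$ (to ensure single-valuedness of $g$ as a function on $\mathbb{C}$) and under translation by $\omega_j$ (to obtain the correct sign in the multiplier $C_j$); this is precisely the place where Lemma \ref{lem2-6} enters essentially. Once these branch issues are settled, both the statement that $g$ is an entire theta function of order $N$ and its $\sigma$-function factorization are routine, and the proposition follows by inspection of the resulting formula.
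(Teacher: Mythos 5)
Your proposal is correct and is essentially the paper's argument: the paper removes the branch points by dividing $y_1$ by $\Psi_p(z)=\sigma(z)/\sqrt{\sigma(z-p)\sigma(z+p)}$ (using Lemma \ref{lem2-6} and \eqref{304-111}), notes that the quotient is an elliptic function of the second kind whose poles lie only at $\Lambda_\tau$ and $\frac{\omega_k}{2}+\Lambda_\tau$ with orders $n_0+1$ and $n_k$, and then quotes the classical $\sigma$-function representation of such functions, which is exactly the form $y_{\mathbf{a},c}$. Your variant multiplies by the square-root factor and $\prod_k\sigma(z-\frac{\omega_k}{2})^{n_k}$ instead of dividing by $\Psi_p$, and rederives the classical factorization by hand (counting the $N$ zeros of the entire quasi-periodic function $g$ via the argument principle and Legendre's relation); this is the same mechanism, just with the standard representation theorem proved inline rather than cited.
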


\begin{proof}
Since $y_1(z;A,W)$ is an common eigenfunction, we have
\begin{equation}\label{exp11}
\ell_i^*y_1(z;A,W)=\varepsilon_{i}y_1(z;A,W)\text{ for some }\varepsilon_{i}\neq 0, i=1,2.
\end{equation}%
As we discussed in Section \ref{monodromy}, $y_1(z;A,W)$ has branch points at $\pm p$. Set $%
\tilde{y}(z):=y_1(z;A,W)/\Psi_p(z)$, where $\Psi_p(z)$ is defined in (\ref{61-3}).
Then $\tilde{y}(z)$ is a
meromorphic function, and it follows from Lemma \ref{lem2-6} and \eqref{exp11} that
\begin{equation}\label{exp111}
\tilde{y}(z+\omega_i)=\varepsilon_{i}\tilde{y}(z),\quad i=1,2.
\end{equation}%
Conventionally, a meromorphic function satisfying %
\eqref{exp111} is called \emph{an elliptic function of second kind} with periods $1$ and $\tau$. Then a classic theorem says that up to a constant, $\tilde{y}(z)$ can be written as
\begin{equation}\label{ytilde}
\tilde{y}(z)=\frac{e^{cz}\prod_{i=1}^{N}\sigma (z-a_{i})}{\sigma
(z)\prod_{k=0}^{3}\sigma (z-\frac{\omega _{k}}{2})^{n_{k}}},
\end{equation}
because $\tilde{y}(z)$ have poles at most at $0$ with order $-n_0-1$ and at $\omega _{k}/2$ with order $-n_k$, $k=1,2,3$. The proof is complete.
\end{proof}

\begin{remark}\label{remarkk1} A consequence of Remark \ref{remarkkk} and Proposition \ref{propp3} is that, up to a constant, the unique even elliptic solution $\Phi_{e}(z;A)$ in Theorem 2.A can be written as
\begin{equation}\label{phi-a}
\Phi_{e}(z;A)=\frac{\prod_{i=1}^{N}\sigma (
z-a_{i})\sigma (
z+a_{i}) }{\sigma ( z-p) \sigma ( z+p)
\prod_{k=0}^{3}\sigma (z-\frac{\omega _{k}}{2})^{n_{k}}\sigma (z+\frac{\omega _{k}}{2})^{n_{k}}}.
\end{equation}
On the other hand, if $y_{\mathbf{a},c}(z)$ is a solution of GLE$({\bf n},p,A,\tau)$, it is easy to see that (i) $y_{-\mathbf{a},-c}(z)=d_1y_{\mathbf{a},c}(-z)$ (for some constant $d_1\neq 0$) is also a solution of GLE$({\bf n},p,A,\tau)$, (ii) $y_{{\bf a},c}(z)$ is a common eigenfunction of $M_j$'s. Together with Proposition \ref{propp3}, we have that (iii) $y(z)$ is a common eigenfunction if and only if  $y(z)$ is of the form $y_{\mathbf{a},c}(z)$ up to a constant, and (iv) $y_2(z):=y_1(-z;A,W)=y_{-{\mathbf a},-c}(z;p)$ up to a constant. In particular, Proposition \ref{propp2} implies that GLE$(\mathbf{n},p,A,\tau)$
is completely reducible if and only if $y_{\mathbf{a},c}(z)$ and $y_{-\mathbf{a},-c}(z)=d_1y_{\mathbf{a},c}(-z)$ are
linearly independent.
\end{remark}

Proposition \ref{propp3} says that $y_1(z;A,W)=dy_{\mathbf{a},c}(z)$ for some $%
\mathbf{a}=(a_1,\cdots,a_N)$ $\in\mathbb{C}^N$, $c\in\mathbb{C}$ and $d\neq 0$. Obviously $%
\{a_1,\cdots,a_N\}\setminus (E_{\tau}[2]\cup\{\pm [p]\})$ is the zero set of $y_1(z;A,W)$ and we expect that $c$ can be
uniquely determined by $\mathbf{a}$. On the other hand, we could apply the
transformation law \eqref{123123} of $\sigma$ to obtain the monodromy
data $(r,s)$ in \eqref{eq216} if GLE$({\bf n},p,A,\tau)$ is completely reducible. These
are proved in the following result. Denote $\eta_3:=\eta_1+\eta_2$.

\begin{theorem}
\label{thm6.1}Let $p\notin E_{\tau}[2]$ and $A\in \mathbb{C}$. Suppose GLE$({\bf n},p,A,\tau)$ is completely reducible with solution $y_{\mathbf{a},c}(z)$ given in Proposition \ref{propp3}. Then the monodromy data $(r,s)$ in \eqref{eq216} and $c=c({\bf a})$ can be determined by $\mathbf{a}$ as follows:
\begin{equation}
\sum_{i=1}^{N}a_{i}=r+s\tau+\sum_{k=1}^{3}\frac{n_{k}\omega_{k}}{2}%
,   \label{61-37}
\end{equation}%
\begin{equation}
c({\bf a})=r\eta_{1}+s\eta_{2}=\frac{1}{2}\sum_{i=1}^{N}(\zeta
(a_{i}+p)+\zeta(a_{i}-p))-\sum_{k=1}^{3}\frac{n_{k}\eta_{k}}{2}, \label{61-38}
\end{equation}%
\begin{equation}
c({\bf a})=\frac{1}{2}\sum_{i=1}^{N}(\zeta
(a_{i}+\tfrac{\omega_k}{2})+\zeta(a_{i}-\tfrac{\omega_k}{2}))-\sum_{i=1}^{3}\frac{n_{i}\eta
_{i}}{2}\text{ if }n_k\not =0.  \label{kkkl}
\end{equation}
Furthermore, $a_i\notin E_\tau[2]$ for all $i$ and $\mathbf{a}=\{a_{1},\cdot \cdot
\cdot,a_{N}\}$ must satisfy one of the following three alternatives:

\begin{itemize}
\item[(a-i)] $a_{i}\not =\pm p$ and $a_{i}\not =\pm a_{j}$ for any $i\not =j$%
, i.e. $\pm p$ are simple poles of $\Phi_{e}(z;A)$.

\item[(a-ii)] $a_{N-1}=a_{N}=p$, $a_{i}\not =\pm p$ and $a_{i}\not =\pm a_{j}
$ for any $i\not =j\leq N-2$, i.e. $\pm p$ are simple zeros of $\Phi_{e}(z;A)$.

\item[(a-iii)] $a_{N-1}=a_{N}=-p$, $a_{i}\not =\pm p$ and $a_{i}\not =\pm
a_{j}$ for any $i\not =j\leq N-2$, i.e. $\pm p$ are simple zeros of $\Phi
_{e}(z;A)$.
\end{itemize}
\end{theorem}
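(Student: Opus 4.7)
The plan is to decompose Theorem \ref{thm6.1} into three pieces: (i) the addition identity \eqref{61-37} together with the first half of \eqref{61-38}; (ii) the two residue-type expressions for $c$, namely the second half of \eqref{61-38} and \eqref{kkkl}; and (iii) the trichotomy (a-i)--(a-iii) on the configuration of $\mathbf{a}$.

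For (i), I would substitute the explicit product form \eqref{exp1} of $y_{\mathbf{a},c}$ into the eigen-relations $y_{\mathbf{a},c}(z+\omega_j)=\varepsilon_j y_{\mathbf{a},c}(z)$, applying the quasi-periodicity $\sigma(z+\omega_j)=-e^{\eta_j(z+\omega_j/2)}\sigma(z)$ to every sigma factor. Lemma~\ref{lem2-6} fixes the correct sign of the square-root factor $\sqrt{\sigma(z-p)\sigma(z+p)}$ under translation. The critical combinatorial identity $N=\sum_k n_k+1$ forces the $z$-dependent exponents and all minus signs to cancel, leaving $\varepsilon_j=\exp\bigl[c\omega_j-\eta_j\bigl(\sum_i a_i-\sum_{k=1}^3 n_k\omega_k/2\bigr)\bigr]$ for $j=1,2$. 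Comparing with $\varepsilon_1=e^{-2\pi is}$ and $\varepsilon_2=e^{2\pi ir}$ gives two complex equations; eliminating $c$ via Legendre's relation $\eta_1\tau-\eta_2=2\pi i$ yields \eqref{61-37} modulo $\Lambda_\tau$, and after choosing a lift $(r,s)$ that makes \eqref{61-37} exact, back-substitution produces $c=r\eta_1+s\eta_2$.

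For (ii), I would analyze the logarithmic derivative $v(z):=y'_{\mathbf{a},c}/y_{\mathbf{a},c}=c+\sum_i\zeta(z-a_i)-\tfrac12[\zeta(z-p)+\zeta(z+p)]-\sum_k n_k\zeta(z-\omega_k/2)$, which is elliptic on $E_\tau$ precisely because $N-1-\sum_k n_k=0$. For the $\pm p$ identity, a Frobenius expansion $y_{\mathbf{a},c}=w^{-1/2}(A_0+A_1 w+\cdots)$ in $w=z-p$, substituted into $y''=I_{\mathbf{n}}y$ using the apparency-forced Laurent expansion $I_{\mathbf{n}}=\frac{3}{4w^{2}}-\frac{A}{w}+A^{2}+O(w)$ (which comes directly from \eqref{101}), yields $A_1/A_0=A$, so the constant term of $v$ at $p$ equals $A$. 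Reading the same constant from the explicit form of $v$ gives one linear equation in $c$ and $A$; repeating the computation at $-p$ (where the $1/w$-residue of $I_{\mathbf{n}}$ flips sign and the corresponding constant becomes $-A$) and \emph{adding} the two equations eliminates $A$ and, after applying $\zeta(p+\omega_k/2)-\zeta(p-\omega_k/2)=\eta_k$, delivers the second half of \eqref{61-38}. For \eqref{kkkl}, the Frobenius recursion at $\omega_k/2$ forces the next-to-leading coefficient $b_1$ of $y=w^{-n_k}(1+b_1 w+\cdots)$ to vanish whenever $n_k\neq 0$, so the constant term of $v$ at $\omega_k/2$ equals $0$; equating this with the explicit constant and using the identities $\zeta(\omega_k/2-\omega_{k'}/2)=(\eta_k-\eta_{k'})/2$ (with $\eta_0:=0$) together with $\tfrac12[\zeta(\omega_k/2-p)+\zeta(\omega_k/2+p)]=\eta_k/2$ produces \eqref{kkkl}.

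For (iii), I would combine the explicit sigma-product form of $\Phi_e$ from Remark~\ref{remarkk1} with Takemura's normal form in Theorem~2.A: at $\pm p$, $\Phi_e$ has either a simple pole (precisely when $d(A)\neq 0$, case (a-i)) or a simple zero (when $d(A)=0$, cases (a-ii)/(a-iii)); in the zero case, matching multiplicities in the numerator $\prod\sigma(z-a_i)\sigma(z+a_i)$ against the denominator $\sigma(z-p)\sigma(z+p)$ forces exactly two of the $a_i$ to coincide with $p$ (or with $-p$, but not both, by evenness). The pairwise-distinctness assertions ($a_i\notin E_\tau[2]$ and $a_i\neq\pm a_j$ among the remaining indices) then follow because any further coincidence would give $\Phi_e$ an extra zero of even order at an interior point, incompatible with the pole/zero balance dictated by Theorem~2.A. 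The main obstacle will be (iii): coordinating the sigma-product representation with Takemura's normal form and carefully ruling out degenerate configurations, using only the complete-reducibility hypothesis $W\neq 0$ from Theorem~\ref{thm2-7} rather than any stronger generic condition on $(\mathbf{a},c)$.
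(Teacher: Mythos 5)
Your parts (i) and (ii) track the paper's own route: the quasi-periodicity computation (with Lemma~\ref{lem2-6} fixing the sign of the square-root factor) gives $\varepsilon_j=\exp\bigl[c\omega_j-\eta_j\bigl(\sum_i a_i-\sum_{k=1}^3 n_k\omega_k/2\bigr)\bigr]$, and Legendre's relation then yields \eqref{61-37} together with $c=r\eta_1+s\eta_2$, exactly as in the paper; the local computations you sketch for the second equality of \eqref{61-38} and for \eqref{kkkl} (constant term of $y'/y$ equal to $A$ at $p$, to $-A$ at $-p$, and to $0$ at $\frac{\omega_k}{2}$ when $n_k\neq0$) are the ones the paper performs but omits, and they check out in the generic case (a-i).

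The genuine gap is in part (iii). Theorem~2.A cannot deliver the trichotomy or the distinctness claims: the normal form \eqref{5} constrains only where $\Phi_e$ may have poles and their maximal orders, and says nothing about its zeros. In particular it does not exclude (a) a zero of order $3$ of $\Phi_e$ at $\pm p$ when $d(A)=0$ (the local exponents of $\Phi_e$ at $\pm p$ are $-1,1,3$, and $d(A)=0$ only rules out $-1$); (b) some $a_i\in E_\tau[2]$, which by \eqref{phi-a} would merely change the order of $\Phi_e$ at $\frac{\omega_k}{2}$ by an even amount, a configuration fully compatible with \eqref{5} (just with some $b_j^{(k)}(A)=0$); or (c) $a_i=\pm a_j$, i.e.\ a double zero of $\Phi_e$ at an interior point, which \eqref{5} also permits. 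All three exclusions need the complete-reducibility hypothesis, and this is precisely how the paper argues: $W\neq0$ means $y_{\mathbf{a},c}(z)$ and $y_{\mathbf{a},c}(-z)$ are linearly independent (Theorem~\ref{thm2-7}, Proposition~\ref{propp2}, Remarks~\ref{remarkkk}--\ref{remarkk1}); two independent solutions cannot both carry the larger indicial exponent at an apparent singularity (this kills the order-$3$ case at $\pm p$, and kills $a_i\equiv\frac{\omega_k}{2}$ because the divisor of $y_{\mathbf{a},c}$ is lattice-invariant, so $y_{\mathbf{a},c}(z)$ and $y_{\mathbf{a},c}(-z)$ have the same exponent at $\frac{\omega_k}{2}$); they cannot share a zero since their Wronskian is the nonzero constant $W$ (this gives $a_i\neq-a_j$); and a nontrivial solution has only simple zeros at regular points (this gives $a_i\neq a_j$). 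Likewise your "not both, by evenness" is not the operative mechanism in (a-ii)/(a-iii): $\Phi_e$ is even by construction, and what forces the number of $a_i$'s equal to $p$ (resp.\ $-p$) to be $0$ or $2$ is that the exponent $-\tfrac12+\#\{i:a_i\equiv p\}$ of $y_{\mathbf{a},c}$ must be $-\tfrac12$ or $\tfrac32$, with linear independence forbidding exponent $\tfrac32$ at both $p$ and $-p$. You do flag at the end that only $W\neq0$ should be used, so the repair is to replace the "pole/zero balance from Theorem~2.A" mechanism by these standard ODE facts — which is exactly the paper's proof.
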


\begin{proof}
Since GLE$({\bf n},p,A,\tau)$ is completely reducible, Remark \ref{remarkk1} says that $y_{\mathbf{a},c}(z)$ and $y_{\mathbf{a},c}(-z)$ are
linearly independent, which implies
\begin{equation*}
\begin{array}{l}
y_{\mathbf{a},c}(z)\text{ \textit{and} }y_{\mathbf{a},c}(-z) \text{ \textit{has a pole
of order} }-n_{k}\text{ \textit{at} }z=\frac{\omega_{k}}{2}\text{ \textit{if}
}n_{k}>0 \\
\text{\textit{and non-zero at} }z=\frac{\omega_{k}}{2}\text{ \textit{if} }%
n_{k}=0.%
\end{array}
\end{equation*}
Thus $a_i\notin E_{\tau}[2]$ for all $i$.

Recalling $\tilde{y}(z)$ in (\ref{ytilde}) and $\Psi_{p}(z)$ in (\ref{61-3}), we have
\begin{equation}
y_{\mathbf{a},c}(z)=\tilde{y}(z)\Psi_{p}(z).   \label{y-}
\end{equation}
By applying the
transformation law \eqref{123123} of $\sigma$ to $\tilde{y}(z)$, we can determine $\varepsilon_i$ in (\ref{exp11})-(\ref{exp111}) by
\[\varepsilon_i=\exp\bigg(c\omega_i-\eta_i\bigg(\sum_{i=1}^{N}a_{i}-\sum_{k=1}^{3}\frac{n_{k}\omega_{k}}{2}\bigg)\bigg),\quad i=1,2.\]
Define $(r,s)\in \mathbb{C}^2$ by
\begin{equation}\label{61-37-1}-2\pi i s:=c-\eta_1\bigg(\sum_{i=1}^{N}a_{i}-\sum_{k=1}^{3}\frac{n_{k}\omega_{k}}{2}\bigg)\end{equation}
\begin{equation}\label{61-37-2}2\pi i r:=c\tau-\eta_2\bigg(\sum_{i=1}^{N}a_{i}-\sum_{k=1}^{3}\frac{n_{k}\omega_{k}}{2}\bigg).\end{equation}
Then $(\varepsilon_1, \varepsilon_2)=(e^{-2\pi is},e^{2\pi ir})$, and it follows from (\ref{exp11}) that this $(r,s)$ is precisely the monodromy data in \eqref{eq216}. By the Legendre relation $\tau\eta_1-\eta_2=2\pi i$, we see that (\ref{61-37-1})-(\ref{61-37-2}) are equivalent to (\ref{61-37}) and $c=r\eta_1+s\eta_2$.

On the other hand, the second equality of (\ref{61-38}) can by proved by inserting the expression
(\ref{exp1}) of $y_{\bf{a}, c}(z)$ into GLE$({\bf n},p,A,\tau)$ and computing the leading terms at
singularities $\pm p$. If $n_{k}>0$, then $\frac{\omega_k}{2}$ is also
a singularity. Then (\ref{kkkl}) follows by inserting (\ref{exp1}) into GLE$({\bf n},p,A,\tau)$ and computing the leading terms at singularities $\pm p$ and $\frac{\omega_k}{2}$. We omit the details here.

To prove (i)-(iii), we recall Remark \ref{remarkkk} that $\Phi_e(z;A)=y_{\mathbf{a},c}(z)y_{\mathbf{a},c}(-z)$. Since $\Phi_e(z;A)$ is even, it has the same local exponent $\alpha$ at $p$ and $-p$, and $\alpha\in\{-1,1,3\}$.

If $\alpha=3$, then the leading term of both $y_{\mathbf{a},c}(z)$
and $y_{\mathbf{a},c}(-z)$ near $p$ is $( z-p) ^{\frac{3}{2}}$,
which implies that $y_{\mathbf{a},c}(z)$
and $y_{\mathbf{a},c}(-z)$ are
linearly dependent, a contradiction.

If $\alpha=-1$, i.e. $\pm p$ are both simple poles of $\Phi_{e}(z;A)$, then by (\ref{phi-a}) we have $a_{j}\not =\pm p$ in $E_\tau$
for all $j$. Consequently, $a_1,\cdots, a_N$ are all zeros of $y_{\mathbf{a},c}(z)$ and so are all simple, i.e. $a_i\neq a_j$ for $i\neq j$. Since $y_{\mathbf{a},c}(z)$
and $y_{\mathbf{a},c}(-z)$ can not have common zeros, we
also have $a_{i}\not =- a_{j}$ for all $i,j$. This proves (a-i).

If $\alpha=1$, i.e. $\pm p$ are both simple zeros of $\Phi_{e}(z;A)$,
one possibility is that the local exponent of $y_{\mathbf{a},c}(z)$ at $p$ is $\frac{3}{2}$ and at $-p$ is $%
\frac{-1}{2}$. Then (\ref{exp1}) implies that $-p\not \in\{a_{1},\cdot
\cdot \cdot,a_{N}\}$ and there are exactly two elements in $\left \{
a_{1},\cdot \cdot \cdot,a_{N}\right \} $ equal to $p$ in $E_{\tau}$. By reordering $%
a_{1},\cdot \cdot \cdot,a_{N}$, (a-ii) is proved. Similarly, for the other possibility that the
local exponent of $y_{\mathbf{a},c}(z)$ at $p$ is $\frac{-1}{2}$ and at $-p$ is $\frac{3%
}{2}$, (a-iii) holds.

The proof is complete.
\end{proof}

By the Legendre relation, the matrix $%
\begin{pmatrix}
1, & \tau \\
\eta_1(\tau) & \eta_2(\tau)%
\end{pmatrix}%
$ is always invertible. Hence \eqref{61-37}-\eqref{61-38} imply the
monodromy data $(r,s)$ can be uniquely determined by $a_1,\cdots,a_N$, which contain all the
zeros of $y_1(z;A,W)$. The next result is to show that $A$ of GLE$({\bf n}, p,A,\tau)$ can
be expressed in terms of $\{a_1,\cdots,a_N\}$ if Case (a-i) happens, i.e. $\pm p\notin\{a_1,\cdots,a_N\}$.

\begin{proposition}
\label{thmpole}Let $p\notin E_{\tau}[2]$ and $A\in \mathbb{C}$. Suppose GLE$({\bf n}, p,A,\tau)$ is completely reducible and Case (a-i) in
Theorem \ref{thm6.1} occurs, then
{\allowdisplaybreaks
\begin{align}
A= & \frac{1}{2}\sum_{i=1}^{N}( \zeta(a_{i}+p)-\zeta(a_{i}-p)) -%
\frac{1}{2}\zeta(2p)  \label{AAA} \\
& -\frac{1}{2}\sum_{k=0}^{3}n_{k}\left( \zeta(p+\tfrac{\omega_{k}}{2}%
)+\zeta(p-\tfrac{\omega_{k}}{2})\right).  \notag
\end{align}
}%
\end{proposition}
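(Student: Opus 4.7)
The plan is to substitute the explicit form $y_{\mathbf{a},c}(z)$ from Proposition~\ref{propp3} into GLE$(\mathbf{n},p,A,\tau)$ and read off $A$ from the Laurent expansion at $z=p$. In Case (a-i) we have $\pm p\notin\{a_1,\dots,a_N\}$ and all $a_i\neq \pm a_j$, so $y_{\mathbf{a},c}$ has a pure square-root branch at $z=p$ and the expansion is clean.

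First I would compute the logarithmic derivative
\[
\frac{y'_{\mathbf{a},c}(z)}{y_{\mathbf{a},c}(z)}=c+\sum_{i=1}^{N}\zeta(z-a_i)-\tfrac{1}{2}\zeta(z-p)-\tfrac{1}{2}\zeta(z+p)-\sum_{k=0}^{3}n_k\,\zeta(z-\tfrac{\omega_k}{2}),
\]
and expand it near $z=p$ using $\zeta(z-p)=\tfrac{1}{z-p}+O((z-p)^3)$. Only the $\zeta(z-p)$ term is singular there, so
\[
\frac{y'_{\mathbf{a},c}}{y_{\mathbf{a},c}}=-\frac{1}{2(z-p)}+\alpha+O(z-p),
\]
where $\alpha:=c+\sum_i\zeta(p-a_i)-\tfrac12\zeta(2p)-\sum_k n_k\zeta(p-\tfrac{\omega_k}{2})$. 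Then $\tfrac{y''}{y}=(\tfrac{y'}{y})'+(\tfrac{y'}{y})^2$ gives
\[
\frac{y''_{\mathbf{a},c}}{y_{\mathbf{a},c}}=\frac{3}{4(z-p)^2}-\frac{\alpha}{z-p}+O(1).
\]

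Next I would expand $I_{\mathbf{n}}(z;p,A,\tau)$ near $z=p$. Only $\tfrac34\wp(z-p)$ and $-A\zeta(z-p)$ contribute singular terms, giving
\[
I_{\mathbf{n}}(z;p,A,\tau)=\frac{3}{4(z-p)^2}-\frac{A}{z-p}+\Big[\tfrac{3}{4}\wp(2p)+A\zeta(2p)+\sum_{k=0}^{3}n_k(n_k+1)\wp(p+\tfrac{\omega_k}{2})+B\Big]+O(z-p).
\]
By the apparency condition \eqref{101} the bracket equals $A^2$, confirming the $(z-p)^{-2}$ and $(z-p)^{0}$ matches. The new information is the residue: equating the $(z-p)^{-1}$ coefficients forces $\alpha=A$, i.e.
\[
A=c+\sum_{i=1}^{N}\zeta(p-a_i)-\tfrac12\zeta(2p)-\sum_{k=0}^{3}n_k\zeta(p-\tfrac{\omega_k}{2}).
\]

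Finally I would use \eqref{61-38} of Theorem~\ref{thm6.1} to substitute $c=\tfrac12\sum_i(\zeta(a_i+p)+\zeta(a_i-p))-\sum_{k=1}^{3}\tfrac{n_k\eta_k}{2}$. The identity $\zeta(p-a_i)=-\zeta(a_i-p)$ converts the $a_i$ sum into $\tfrac12\sum_i(\zeta(a_i+p)-\zeta(a_i-p))$. For the $\frac{\omega_k}{2}$ terms with $k\geq1$, the quasi-periodicity $\zeta(z+\omega_k)=\zeta(z)+\eta_k$ gives $\zeta(p+\tfrac{\omega_k}{2})=\zeta(p-\tfrac{\omega_k}{2})+\eta_k$, hence
\[
n_k\zeta(p-\tfrac{\omega_k}{2})+\tfrac{n_k\eta_k}{2}=\tfrac{n_k}{2}\bigl(\zeta(p+\tfrac{\omega_k}{2})+\zeta(p-\tfrac{\omega_k}{2})\bigr),
\]
and for $k=0$ the term $n_0\zeta(p)$ already equals $\tfrac{n_0}{2}(\zeta(p)+\zeta(p))$. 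Assembling everything yields \eqref{AAA}.

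The only nontrivial input is the apparency identity \eqref{101}, which is needed to ensure that no log term appears in $y_{\mathbf{a},c}$ at $\pm p$; without it, the $O(1)$ coefficients would not match and the residue argument would be ill-posed. The rest of the proof is a bookkeeping exercise with Laurent expansions and the quasi-periods of $\zeta$, so the main thing to be careful about is the sign accounting coming from the oddness of $\zeta$ and the correct form \eqref{61-38} for $c=c(\mathbf{a})$.
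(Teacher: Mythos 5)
Your proof is correct and follows exactly the route the paper indicates (it only sketches this argument and omits the details): insert $y_{\mathbf{a},c}$ into GLE$(\mathbf{n},p,A,\tau)$ and compare the Laurent expansions at the singularity $p$, the residue comparison giving $A$ and the identity \eqref{61-38} together with the oddness and quasi-periodicity of $\zeta$ yielding \eqref{AAA}. The computations check out, so nothing further is needed.
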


Again, (\ref{AAA}) can be obtained directly by inserting the expression
(\ref{exp1}) of $y_{\bf{a}, c}(z)$ into GLE$({\bf n},p,A,\tau)$ and computing the leading terms at
singularities $\pm p$. We omit the
proof here.

\begin{remark}
Theorem 2.A implies that for fixed $\tau$ and $p\notin E_{\tau}[2]$, there are \emph{only finite many} $A$'s (i.e. zeros of the polynomial $d(A)$) such that Case (a-ii) or (a-iii) occurs.
Therefore, if $\mathbf{a}$ is in Case (a-ii) or (a-iii), we could use
a sequence of $\mathbf{a}^{k}$ of Case (a-i) to approximate $\mathbf{a}$. In
particular, $a_{N-1}^{k}$ and $a_{N}^{k}$ converge to $p$ for Case (a-ii).
The fact that $c(\mathbf{a}^{k}) $ converges and (\ref{61-38}) implies the sum $
\zeta( a_{N-1}^{k}-p) +\zeta( a_{N}^{k}-p) $ also
converges, namely $( a_{N-1}^{k}-p) ^{-1}+(
a_{N}^{k}-p) ^{-1}$ tends to a finite limit as $\mathbf{a}^{k}\to \mathbf{a}$.
Then $c( \mathbf{a}) $ and $A$ can be also expressed in terms of $\{a_{1},\cdot \cdot \cdot,a_{N-2}\}$ in Case (a-ii) and (a-iii).\end{remark}

Now we consider that GLE$(\mathbf{n},p,A_{0},\tau)$
is \textit{not completely reducible}. Then $y_{{\bf a},c}(z;p)$ in Proposition \ref{propp3}
is the only common eigenfunction up to a constant.

\begin{theorem}
\label{thm3}Let $p\not \in E_{\tau}[2] $ and $A\in \mathbb{C}$.
Suppose that GLE$(\mathbf{n},p,A,\tau)$ is not completely
reducible with solution $y_{\mathbf{a},c}(z)$ given in Proposition \ref{propp3}. Then There exists $( m_{1},m_{2}) \in \frac{1}{2}
\mathbb{Z}^{2}$ such that
\begin{equation}
\{ a_{1},\cdot \cdot \cdot,a_{N}\} \equiv\{ -a_{1},\cdot \cdot
\cdot,-a_{N}\} \text{ mod }\Lambda_\tau,   \label{a}
\end{equation}%
\begin{equation}
\sum_{j=1}^{N}a_{j}=m_{1}+m_{2}\tau+\sum_{k=1}^{3}\frac{%
n_{k}\omega_{k}}{2}\in E_{\tau}[2] ,   \label{b}
\end{equation}%
\begin{equation}
c=m_{1}\eta_{1}+m_{2}\eta_{2}.   \label{c}
\end{equation}
\end{theorem}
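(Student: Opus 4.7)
The plan is to combine Proposition \ref{propp2} with the eigenvalue computation from the proof of Theorem \ref{thm6.1}, exploiting the observation that failure of complete reducibility forces the eigenvalues $\varepsilon_j$ of $y_{\mathbf{a},c}$ to satisfy $\varepsilon_j^2=1$, which then pins down $\mathbf{a}$ and $c$ up to half-lattice data.

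First I would use Proposition \ref{propp2} together with Remark \ref{remarkk1}: since GLE$(\mathbf{n},p,A,\tau)$ is not completely reducible, $y_{\mathbf{a},c}(z)$ and $y_{\mathbf{a},c}(-z)$ are linearly dependent, so there exists $\lambda\neq 0$ with $y_{\mathbf{a},c}(-z)=\lambda\,y_{\mathbf{a},c}(z)$. Matching the zero sets of the two sides as divisors on $E_\tau$ yields (\ref{a}) immediately. Squaring this identity and using Theorem~2.A (which implies $\Phi_e(z;A)=y_1(z;A)y_1(-z;A)$ is elliptic), one obtains that $y_{\mathbf{a},c}(z)^2$ itself is elliptic, hence invariant under $z\mapsto z+\omega_j$. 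Combined with $\ell_j^{\ast}y_{\mathbf{a},c}=\varepsilon_j y_{\mathbf{a},c}$, this forces $\varepsilon_j\in\{\pm 1\}$ for $j=1,2$.

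Next I would invoke the $\sigma$-transformation computation carried out in the proof of Theorem \ref{thm6.1}: formulas (\ref{61-37-1})--(\ref{61-37-2}) are obtained by applying the quasi-periodicity \eqref{123123} to the explicit form \eqref{ytilde} of $\tilde{y}(z)$, and do \emph{not} actually require complete reducibility; they need only that $y_{\mathbf{a},c}$ be a common eigenfunction, which is guaranteed by Proposition \ref{propp3}. Hence the same calculation gives
\[ \varepsilon_j=\exp\Bigl(c\,\omega_j-\eta_j\Bigl(\sum_{i=1}^N a_i-\sum_{k=1}^{3}\tfrac{n_k\omega_k}{2}\Bigr)\Bigr),\quad j=1,2. \]
Writing $\varepsilon_1=e^{-2\pi i m_2}$ and $\varepsilon_2=e^{2\pi i m_1}$, the restriction $\varepsilon_j^2=1$ is precisely $(m_1,m_2)\in\tfrac12\mathbb{Z}^2$. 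Inverting the resulting linear system via the Legendre relation $\tau\eta_1-\eta_2=2\pi i$ produces $\sum_{j=1}^{N} a_j=m_1+m_2\tau+\sum_{k=1}^{3}n_k\omega_k/2$ together with $c=m_1\eta_1+m_2\eta_2$, giving (\ref{c}); the membership in $E_\tau[2]$ in (\ref{b}) follows because both $m_1+m_2\tau$ and $\sum_{k=1}^{3}n_k\omega_k/2$ lie in $\tfrac12\Lambda_\tau$.

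The only real subtlety I anticipate is justifying that the eigenvalue formula from Theorem \ref{thm6.1} remains available here, since that theorem is phrased under the completely reducible hypothesis. The remedy is simply to inspect its proof and note that the derivation of $\varepsilon_j$ is a direct $\sigma$-quasi-periodicity computation on \eqref{ytilde} which uses nothing more than the existence of a common eigenfunction $y_{\mathbf{a},c}$; no step requires linear independence of $y_{\mathbf{a},c}(z)$ and $y_{\mathbf{a},c}(-z)$. Aside from this bookkeeping, I do not expect any genuinely hard step.
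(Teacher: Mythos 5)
Your proposal is correct and follows essentially the same route as the paper: linear dependence of $y_{\mathbf{a},c}(z)$ and $y_{\mathbf{a},c}(-z)$ gives (\ref{a}) directly from (\ref{exp1}), forces $\varepsilon_j\in\{\pm1\}$, and then the quasi-periodicity computation from the proof of Theorem \ref{thm6.1} (which indeed only needs a common eigenfunction) yields (\ref{b})--(\ref{c}) with $(m_1,m_2)\in\tfrac12\mathbb{Z}^2$. Your derivation of $\varepsilon_j^2=1$ via ellipticity of $y_{\mathbf{a},c}^2$ is a harmless variant of the paper's observation that dependence of the eigenfunctions with eigenvalues $\varepsilon_j$ and $\varepsilon_j^{-1}$ forces $\varepsilon_j=\varepsilon_j^{-1}$.
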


\begin{proof}
Remark \ref{remarkk1} shows that $y_{{\bf a},c}(-z)$ and $y_{{\bf a},c}(z)$ are linearly dependent, so
\eqref{a} follows trivially from the expression (\ref{exp1}). Besides, (\ref{exp11}) gives
\[
\ell_i^*y_{{\bf a},c}(z)=\varepsilon_{i}y_{{\bf a},c}(z),\quad \varepsilon_{i}\in \{\pm 1\},\; i=1,2,
\]
because the linear dependence of $y_{{\bf a},c}(-z)$ and $y_{{\bf a},c}(z)$ imply $\varepsilon_{i}=\varepsilon_{i}^{-1}$. Consequently,
\eqref{b}-\eqref{c} can be proved by the same way as (\ref{61-37})-(\ref{61-38}), where $( m_{1},m_{2}) \in \frac{1}{2}
\mathbb{Z}^{2}$ follows from $\varepsilon_{i}\in \{\pm 1\}$ for $i=1,2$.
\end{proof}

Now we want to define the embedding of $\Gamma_{n,p}$ into Sym$^NE_\tau$,
where $N=\sum_{i=0}^3n_i+1$ and Sym$^NE_\tau$ is the $N$-th symmetric product of $E_\tau $. For any $(A,W)\in\Gamma_{n,p}$, it follows from Remark \ref{remarkkk} that
the solution $y_1(z;A,W)$ is unique up to a sign. By (\ref{exp1}) and Proposition \ref{propp3}, there is  ${\bf a}=\{a_1,\cdots,a_N\}$ (unique mod $\Lambda_{\tau}$) such that $y_1(z;A,W)=y_{{\bf a},c}(z)$, i.e. {\bf a} gives
all the zeros of $y_1(z;A,W)$. Then we define a map $i_{n,p}:\Gamma_{n,p}\rightarrow \,\text{Sym}^NE_\tau$
by
\begin{equation}
i_{\mathbf{n},p}(A,W) :=\{[a_{1}],\cdot \cdot \cdot,[a_{N}]\}\in \text{Sym}^NE_\tau,
\label{i1}
\end{equation}
where as introduced in Section 1, $[a_i]:=a_i \ (\text{mod}\ \Lambda_{\tau}) \in E_{\tau}$. The above argument shows that $i_{{\bf n},p}$ is well-defined. Furthermore, if $W\neq0$, then we see from Remark \ref{remarkkk} that
\begin{equation}
i_{\mathbf{n},p}( A,-W) =\{-[a_{1}],\cdot \cdot \cdot,-[a_{N}]\}.
\label{i2}
\end{equation}

\begin{proposition}
$i_{{\bf n},p}$ is an embedding from $\Gamma_{n,p}$ into Sym$^NE_\tau$.
\end{proposition}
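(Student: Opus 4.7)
The map $i_{\mathbf{n},p}$ is already a well-defined set-theoretic map by Remark \ref{remarkkk} together with Proposition \ref{propp3}. To prove it is an embedding I need to verify (i) it is a morphism of algebraic varieties and (ii) it is injective. For (i), Theorem 2.A shows the coefficients of $\Phi_{e}(z;A)$ are polynomials in $A$, and Lemma \ref{lemmm} gives the explicit formula
\[
y_{1}(z;A,W)=\sqrt{\Phi_{e}(z;A)}\exp\int^{z}\frac{W}{2\Phi_{e}(\xi;A)}\,d\xi,
\]
so its zero multiset, which is the image point in $\text{Sym}^{N}E_{\tau}$, depends algebraically on $(A,W)\in\Gamma_{\mathbf{n},p}$.

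For injectivity, assume $i_{\mathbf{n},p}(A_{1},W_{1})=i_{\mathbf{n},p}(A_{2},W_{2})$ with common image $\mathbf{a}=\{[a_{1}],\ldots,[a_{N}]\}\in\text{Sym}^{N}E_{\tau}$. The first step is to show $A_{1}=A_{2}$. In Case (a-i) of Theorem \ref{thm6.1}---i.e.\ when $\pm[p]\notin\mathbf{a}$---formula \eqref{AAA} of Proposition \ref{thmpole} recovers $A$ directly as an explicit function of $\mathbf{a}$. In the degenerate Cases (a-ii) and (a-iii), the Remark following Proposition \ref{thmpole} supplies the corresponding limiting expression for $A$, which exists because $(a_{N-1}-p)^{-1}+(a_{N}-p)^{-1}$ tends to a finite value. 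Hence $A_{1}=A_{2}=:A$, and then $W_{i}^{2}=Q_{\mathbf{n},p}(A)$ yields $W_{1}=\pm W_{2}$.

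To rule out the remaining possibility $W_{1}=-W_{2}\neq 0$, apply \eqref{i2}: this would force the identity of multisets $\{[a_{1}],\ldots,[a_{N}]\}=\{-[a_{1}],\ldots,-[a_{N}]\}$ in $E_{\tau}$. By Theorem \ref{thm3}, such a $\pm$-symmetric multiset arises only in the not-completely-reducible case, which by Theorem \ref{thm2-7} forces $Q_{\mathbf{n},p}(A)=0$, i.e.\ $W=0$---a contradiction. Therefore $(A_{1},W_{1})=(A_{2},W_{2})$, and $i_{\mathbf{n},p}$ is injective. The main obstacle I anticipate is ensuring that $A$ is a well-defined (continuous, in fact algebraic) function of $\mathbf{a}\in\text{Sym}^{N}E_{\tau}$ across the strata where some $a_i$ collides with $\pm p$; this is where the Case (a-i) formula \eqref{AAA} must be matched to its Case (a-ii)--(a-iii) degenerate versions via the limit indicated in the Remark after Proposition \ref{thmpole}, and this bookkeeping is the only non-formal part of the argument.
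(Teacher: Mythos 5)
Your overall strategy (recover $A$ from the common image $\mathbf{a}$, then pin down the sign of $W$) is a genuinely different route from the paper's, but as written it has a real gap at the points of $\Gamma_{\mathbf{n},p}$ with $W=0$. Theorem \ref{thm6.1} (the trichotomy (a-i)--(a-iii)), Proposition \ref{thmpole}, and the Remark following it are all stated under the hypothesis that GLE$(\mathbf{n},p,A,\tau)$ is \emph{completely reducible}, i.e.\ $W\neq 0$. If $i_{\mathbf{n},p}(A_1,0)=i_{\mathbf{n},p}(A_2,0)=\mathbf{a}$, none of the tools you cite recovers $A$ from $\mathbf{a}$: in the non-completely-reducible case the trichotomy need not hold (the local exponent of $\Phi_e$ at $\pm p$ can then be $3$, so both $[p]$ and $[-p]$ may appear twice in $\mathbf{a}$, a configuration outside (a-i)--(a-iii)), and even when $\pm[p]\notin\mathbf{a}$ you are invoking \eqref{AAA} outside its stated hypotheses. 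Mixed pairs ($W_1\neq 0$, $W_2=0$) can indeed be excluded by the symmetry of the multiset, but for two $W=0$ points your argument does not yield $A_1=A_2$. The paper sidesteps this uniformly: the multiset $\mathbf{a}$ determines $\Phi_e$ up to a constant by \eqref{phi-a} (valid with no reducibility assumption), the third-order equation \eqref{303-1} then forces $A_1=A_2$, and finally Theorem \ref{thm2-7} together with \eqref{i1}--\eqref{i2} settles $W$. If you want to keep your residue-formula route, you must either prove that \eqref{AAA} (and its degenerate versions) holds without the complete-reducibility hypothesis and covers the case $\pm[p]\in\mathbf{a}$, or treat the $W_1=W_2=0$ case by a separate argument such as the paper's.

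Two smaller points. First, in ruling out $W_1=-W_2\neq 0$ you cite Theorem \ref{thm3} in the converse direction: that theorem says ``not completely reducible $\Rightarrow$ $\mathbf{a}=-\mathbf{a}$'', whereas you need ``$\mathbf{a}=-\mathbf{a}$ $\Rightarrow$ not completely reducible''. The latter is true, but it follows from Theorem \ref{thm6.1} (in each of (a-i)--(a-iii) one checks $\mathbf{a}\neq-\mathbf{a}$ using $a_i\notin E_\tau[2]$, $a_i\neq\pm a_j$, and $-p\notin\mathbf{a}$ resp.\ $p\notin\mathbf{a}$), or from the fact that two linearly independent solutions cannot share zeros; it is not what Theorem \ref{thm3} states. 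Second, the proposition only asserts injectivity into Sym$^N E_\tau$, so the preliminary ``morphism of algebraic varieties'' discussion via Lemma \ref{lemmm} is unnecessary, and in any case ``depends algebraically'' there is asserted rather than proved.
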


\begin{proof}
Suppose $i_{{\bf n},p}(A,W)=i_{{\bf n},p}(\tilde{A},\tilde{W})=\{[a_1],\cdots,[a_N]\}$.
 Then (\ref{phi-a}) implies $
\Phi_e(z;A)=\Phi_e(z;\tilde{A})$ up to a constant and so $A=\tilde{A}$ by (\ref{303-1}). Together with Theorem
\ref{thm2-7}, we have $W^2=\tilde{W}^2$.
If $W=0$ then $\tilde{W}=0.$ If $W\neq 0$ then by \eqref{i1} and \eqref{i2}
we also have $W=\tilde{W}$. This proves the embedding of $i_{{\bf n},p}$.
\end{proof}

By applying $i_{{\bf n},p}$, we could define $\sigma_{{\bf n},p}:\Gamma_{n,p}\rightarrow
E_\tau$ by
\begin{equation}
\sigma_{{\bf n},p}(A,W):=\sum_{i=1}^N[a_i]-\sum_{k=1}^3 [\tfrac{n_k\omega_k}{2}],
\end{equation}
which is the composition of $i_{{\bf n},p}$ and the addition map $\{[a_1],\cdots,[a_N]\}\mapsto \sum_{i=1}^N[a_i]-\sum_{k=1}^3 [\tfrac{n_k\omega_k}{2}]$. Thus $i_{{\bf n},p}$ is also called the \emph{addition map}. Clearly
\[\sigma_{{\bf n},p}(A,-W)=-\sum_{i=1}^N[a_i]-\sum_{k=1}^3 [\tfrac{n_k\omega_k}{2}]=-\sigma_{{\bf n},p}(A,W).\]
The map $\sigma_{{\bf n},p}$ is a holomorphic map (or a finite morphism) from $\Gamma_{{\bf n},p}$
to $E_\tau$ if the irreducible curve $\Gamma_{{\bf n},p}$ is smooth (or if $\Gamma_{n,p}$ is
singular). Hence in both cases, the degree $\deg \sigma_{{\bf n},p}= \#\sigma_{{\bf n}, p}^{-1}(z),z\in E_\tau$,
is well-defined. How to calculate it is the main purpose of the paper.

\begin{definition} We let $Y_{{\bf n},p}(\tau)$ be the image of $\Gamma_{{\bf n},p}(\tau)$ in Sym$^NE_\tau$
under $i_{{\bf n},p}$, and $X_{{\bf n},p}(\tau)$ be the image of $\{(A,W)\in\Gamma_{{\bf n},p}|W\neq
0\}$ under $i_{{\bf n},p}$.
\end{definition}

The next two section will discuss of the limiting of $Y_{{\bf n},p}(\tau)$ for two case (i) $%
A\rightarrow\infty$ and $p\notin E_\tau[2]$ is fixed; (ii) $%
p\rightarrow\omega_k/2$.

\section{The closure of $Y_{\mathbf{n},p}\left( \protect\tau \right) $}

\label{closure}


The purpose of this section is to prove the following result.

\begin{theorem}
\label{Hyper}Let $\mathbf{n}=$ $\left( n_{0},n_{1},n_{2},n_{3}\right) $
where $n_{k}\in \mathbb{Z}_{\geq0}$. Then the followings hold.

\begin{itemize}
\item[(i)]
\begin{equation*}
\overline{X_{\mathbf{n},p}( \tau) }=\overline{Y_{\mathbf{n}%
,p}(\tau) }=Y_{\mathbf{n},p}( \tau ) \cup  \{
\infty_{+}(p),  \infty_{-}(p)\},
\end{equation*}
where
\begin{equation}
\infty_{\pm}( p) :=\bigg( \overset{n_{0}}{\overbrace{0,\cdot \cdot
\cdot,0}},\overset{n_{1}}{\overbrace{\frac{\omega_{1}}{2},\cdot \cdot \cdot,%
\frac{\omega_{1}}{2}}},\overset{n_{2}}{\overbrace{\frac{\omega_{2}}{2},\cdot
\cdot \cdot,\frac{\omega_{2}}{2}}},\overset{n_{3}}{\overbrace {\frac{%
\omega_{3}}{2},\cdot \cdot \cdot,\frac{\omega_{3}}{2}}},\pm p\bigg).   \label{+}
\end{equation}%
Furthermore, $\overline{X_{\mathbf{n},p}(\tau) }$
is smooth at $\infty_{\pm}(p) $.

\item[(ii)] The map $i_{\mathbf{n},p}:\Gamma_{\mathbf{n,}%
p}\rightarrow Y_{\mathbf{n},p}$ has a natural extension to $\bar{%
\imath}_{\mathbf{n},p}:\overline{\Gamma_{\mathbf{n,}p}}\rightarrow
\overline {Y_{\mathbf{n},p}}$ by $\bar{\imath}_{%
\mathbf{n,}p}( \infty_{\pm} ): =\infty_{\pm}(
p)  $, where $\{ \infty_{\pm}\}:=\overline{\Gamma_{\mathbf{n,}p}}\backslash \Gamma_{%
\mathbf{n},p}$ is given in (\ref{eqgamma}).
\end{itemize}

\end{theorem}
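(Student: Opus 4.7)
My plan is to exploit two different representations of the even elliptic solution $\Phi_e(z;A)$: the polynomial-in-$A$ normal form from Theorem 2.A, and the $\sigma$-product factorization \eqref{phi-a} in Remark \ref{remarkk1}; comparing them as $A\to\infty$ along $\Gamma_{\mathbf{n},p}$ will pin down the limit point in $\mathrm{Sym}^N E_\tau$.

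First, from the normal form \eqref{5} together with the degree inequality $\deg_A C_0(A)>\max(\deg_A b_j^{(k)}, \deg_A d)$ and the leading coefficient $\tfrac12$ of $C_0$, I get
$$A^{-g}\,\Phi_e(z;A)\;\longrightarrow\;\tfrac{1}{2}\qquad (A\to\infty)$$
uniformly on compact subsets of $E_\tau\setminus(E_\tau[2]\cup\{\pm[p]\})$. On the other hand, \eqref{phi-a} presents $\Phi_e(z;A)$ (up to a nonzero constant $c(A)$) as a meromorphic function on $E_\tau$ with prescribed poles of order $2n_k$ at each $\omega_k/2$ and simple poles at $\pm p$, and with zeros at $\pm[a_i(A)]$. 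Since a function that converges to a nonzero constant on compacta away from its pole set cannot retain any zero outside a shrinking neighborhood of that pole set, every $\pm[a_i(A)]$ must converge, as $A\to\infty$, to some point of $E_\tau[2]\cup\{\pm[p]\}$; moreover, the multiplicity of zeros collapsing to each such point must match the pole order there. This forces, after relabeling, exactly $n_k$ of the $a_i$'s to cluster at $\omega_k/2$ for each $k$ and exactly one $a_i$ to tend to either $p$ or $-p$, so $i_{\mathbf{n},p}(A,W)\to\infty_+(p)$ or $\infty_-(p)$.

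Next, the hyperelliptic involution identity \eqref{i2} together with the fact (from \eqref{eqgamma}) that $\overline{\Gamma_{\mathbf{n},p}}$ has exactly two unramified points at infinity, $\infty_\pm$, shows that the two sheets $W=\pm A^{g+1}(1+O(1/A))$ give the two distinct limits $\infty_\pm(p)$, proving (ii) and the set-theoretic part of (i); the equality $\overline{X_{\mathbf{n},p}}=\overline{Y_{\mathbf{n},p}}$ is then automatic because $\{W=0\}\cap Y_{\mathbf{n},p}$ consists of the finitely many points lying over the zero set of the polynomial $Q_{\mathbf{n},p}$, hence $X_{\mathbf{n},p}$ is dense in the irreducible $Y_{\mathbf{n},p}$ and $i_{\mathbf{n},p}$ is continuous (even algebraic).

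The main technical step --- and the one I expect to be the actual obstacle --- is the smoothness of $\overline{X_{\mathbf{n},p}}$ at $\infty_\pm(p)$. Since $\overline{\Gamma_{\mathbf{n},p}}$ is smooth at $\infty_\pm$ with local coordinate $u:=1/A$, it suffices to show that $\bar{\imath}_{\mathbf{n},p}$ is a local embedding at $\infty_\pm$; then $\overline{X_{\mathbf{n},p}}$ inherits smoothness. To do this, I will use the elementary symmetric functions of the local deviations $\{a_i-\tfrac{\omega_k}{2}\}$ at each of the four clusters, and of $a_{i_0}-(\pm p)$ at the singleton cluster, as local analytic coordinates on $\mathrm{Sym}^N E_\tau$ near the highly degenerate point $\infty_\pm(p)$. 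The coefficients $C_0(A),b_j^{(k)}(A),d(A)$ in \eqref{5} are polynomials in $A$ with explicit leading behavior, so matching them against the Laurent expansions of \eqref{phi-a} at each pole produces a system whose solution $\mathbf{a}(u)$ is a convergent holomorphic expansion in $u$ near $u=0$ (via the implicit function theorem, whose Jacobian nondegeneracy is read off from the next-to-leading terms in \eqref{5}). Verifying this Jacobian condition and tracking the two determinations of the square root $W$ onto the two different clusters $\{a_{i_0}\to+p\}$ versus $\{a_{i_0}\to-p\}$ is the non-routine calculation; once it is done, injectivity of $d\bar{\imath}_{\mathbf{n},p}$ at $\infty_\pm$ follows immediately, completing the proof.
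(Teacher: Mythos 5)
Your identification of the boundary points is correct but takes a genuinely different route from the paper's. You work directly with the normalized even solution: Theorem 2.A gives $A^{-g}\Phi_e(z;A)\to\tfrac12$ locally uniformly away from $E_\tau[2]\cup\{\pm[p]\}$, and a Hurwitz/argument-principle count against the zero--pole pattern of \eqref{phi-a} (legitimate for $|A|$ large, where $Q_{\mathbf{n},p}(A)\neq 0$, so Theorem \ref{thm6.1} guarantees $a_i\notin E_\tau[2]$ and case (a-i)) forces exactly $n_k$ of the $a_i$ to collapse onto $\frac{\omega_k}{2}$ and exactly one onto $p$ or $-p$; the involution \eqref{i2} then separates the two sheets. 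The paper never touches $\Phi_e$ at this stage: it differentiates the two expressions \eqref{5051} and \eqref{72} of the potential to get \eqref{qq}, deduces $c(p)/A(p)\to\beta\neq 0$, and reads the limiting configuration off the elliptic-function identity \eqref{D2}, whose pole bookkeeping yields $\mathbf{a}^0=\infty_{\pm}(p_0)$ together with $\beta=\mp 1$ (Propositions \ref{prop5-0} and \ref{prop5-3}). Note that the paper's propositions are formulated for variable $p\to p_0\notin E_\tau[2]$ and are reused later (e.g.\ in Lemma \ref{lem58}), which your fixed-$p$ argument would not directly replace; but for Theorem \ref{Hyper} itself your counting argument is a valid and arguably more elementary substitute. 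You should, however, spell out the multiplicity-matching step (argument principle on small circles, using that the uniform limit $\tfrac12$ is nonvanishing there), and observe that the limit along each sheet exists because the cluster set at each puncture is connected and contained in the two-point set $\{\infty_+(p),\infty_-(p)\}$ --- a point both you and the paper pass over quickly, and which is what makes the extension $\bar{\imath}_{\mathbf{n},p}(\infty_\pm):=\infty_\pm(p)$ well defined.

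The one genuine gap is the smoothness of $\overline{X_{\mathbf{n},p}(\tau)}$ at $\infty_\pm(p)$: you explicitly leave the key verification (nondegeneracy of the Jacobian in your symmetric-function coordinates on $\mathrm{Sym}^N E_\tau$) undone, so as written this part is a plan rather than a proof. For comparison, the paper performs no such computation either: it treats the smoothness as following at once from \eqref{eqgamma}, i.e.\ from viewing $\overline{X_{\mathbf{n},p}}$ near $\infty_\pm(p)$ through the extended map $\bar{\imath}_{\mathbf{n},p}$ on $\overline{\Gamma_{\mathbf{n},p}}$, which is smooth and unramified at $\infty_\pm$. So your instinct that this is ``the actual obstacle'' overstates its role in the paper, but the difficulty you flag is real in the sense that an injective continuous (even algebraic) parametrization by a smooth curve does not by itself exclude a cusp in the image; either your Jacobian computation or an explicit local expansion of $\mathbf{a}$ in the coordinate $u=1/A$ showing $\bar{\imath}_{\mathbf{n},p}$ is an immersion at $\infty_\pm$ is needed to close this point, and your proposal stops short of doing so.
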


We will prove a slight generalization of Theorem \ref{Hyper}. Indeed, we
want to study the limiting of $Y_{\mathbf{n},p}(\tau) $ as $%
p\rightarrow p_{0}\not \in E_{\tau}[2] $. For each $p$ near $%
p_0$, we associate a $A(p)\in\mathbb{C}$ and consider GLE$(\mathbf{n},p,A(p),
\tau)$.
Letting ${\bf a}(p)=\{[a_1(p)],\cdots,[a_N(p)]\}=i_{{\bf n}, p}(A(p),W(p))$ and $c(p):=c({\bf a}(p))=c$ given by Proposition \ref{propp3}, we study the limits of ${\bf a}(p)$ and $c(p)$ as $p\rightarrow p_0$. Since ${\bf a}(p)\in \text{Sym}^N{E_{\tau}}$, up to a subsequence, we can always assume that ${\bf a}(p)\to {\bf a}^{0}=\{[a_1^0],\cdots,[a_N^0]\}$ and
\begin{equation}\label{ajconverge}
a_j(p)\to a_j^{0}\quad\text{as}\;\,p\to p_0,\quad \forall\, j.
\end{equation}

\begin{proposition}
\label{prop5-0} Let $p\rightarrow p_{0}\not \in E_{\tau}[2]$ and $
S:=E_\tau[2]\cup\{\pm [p_0]\}$. Then $I_{\mathbf{n}}( z;p,A(p),\tau)$ converges in $E_\tau\setminus S$ if and only if $A(p)$
converges if and only if the corresponding $B(p)$ converges if and only if $c(p)$ converges.
\end{proposition}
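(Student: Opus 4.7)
The plan is to prove the four-way equivalence by establishing the cycle
\[
A(p)\text{ conv.}\;\Longrightarrow\; B(p)\text{ conv.}\;\Longrightarrow\; I_{\mathbf{n}}(z;p,A(p),\tau)\text{ conv.}\;\Longrightarrow\; A(p)\text{ conv.},
\]
together with the separate equivalence $A(p)$ conv.\ $\Leftrightarrow$ $c(p)$ conv. The three implications in the cycle are algebraic in nature. The step $A\Rightarrow B$ is immediate from (\ref{101}), since $p_{0}\notin E_{\tau}[2]$ ensures that each of $\zeta(2p)$, $\wp(2p)$, and $\wp(p+\omega_{k}/2)$ extends continuously to $p_{0}$, so $B(p)=A(p)^{2}-\zeta(2p)A(p)+O(1)$ converges whenever $A(p)$ does. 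For $B\Rightarrow I_{\mathbf{n}}$, the same identity viewed as a quadratic in $A$ shows that $A(p)$ remains bounded; then (\ref{5031}) exhibits $I_{\mathbf{n}}(z;p,A(p),\tau)$ as a sum of terms continuous in $(z,p)$ on any compact $K\Subset E_{\tau}\setminus S$ plus $A(p)$-linear and $B(p)$-linear terms, yielding uniform convergence on $K$ after passing to a subsequence in $A(p)$, the full convergence being recovered by the next implication.

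For $I_{\mathbf{n}}\Rightarrow A$, I would choose two base points $z_{1},z_{2}\in E_{\tau}\setminus S$ with
\[
\det\begin{pmatrix}\zeta(z_{1}+p_{0})-\zeta(z_{1}-p_{0}) & 1\\ \zeta(z_{2}+p_{0})-\zeta(z_{2}-p_{0}) & 1\end{pmatrix}\neq 0,
\]
which exist because $\zeta(z+p_{0})-\zeta(z-p_{0})$ is non-constant in $z$. Evaluating (\ref{5031}) at $z_{1},z_{2}$ and moving the $(A,B)$-independent terms to the other side yields a linear system for $(A(p),B(p))$ whose coefficient matrix converges to the above non-singular one; convergence of $I_{\mathbf{n}}$ at $z_{1},z_{2}$ then forces convergence of both $A(p)$ and $B(p)$, closing the cycle.

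For the equivalence $A(p)\Leftrightarrow c(p)$, the forward direction follows from the cycle combined with continuous dependence of ODE solutions on parameters: once $I_{\mathbf{n}}(z;p,A(p),\tau)$ converges on compacts of $E_{\tau}\setminus S$, the common eigenfunction $y_{1}(z;A(p),W(p))$ converges under a suitable normalization, and from the canonical representation (\ref{exp1}) one reads off both $\mathbf{a}(p)\to\mathbf{a}^{0}$ and $c(p)\to c_{0}$. For the reverse direction $c\Rightarrow A$, compactness of $\mathrm{Sym}^{N}E_{\tau}$ lets me assume (\ref{ajconverge}); if the limit $\mathbf{a}^{0}$ falls in Case (a-i) of Theorem \ref{thm6.1} (no component equal to $\pm p_{0}$), every $\zeta$-term in (\ref{AAA}) converges, so $A(p)$ converges directly.

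The main obstacle is the boundary case where some $a_{j}^{0}\in\{\pm p_{0}\}$, i.e.\ the limit lies in Case (a-ii) or (a-iii). Then individual summands $\zeta(a_{j}(p)\mp p)$ blow up like simple poles at $p_{0}$ and (\ref{AAA}) does not literally apply. Following the approximation sketched in the remark after Proposition \ref{thmpole}, I would approach $\mathbf{a}^{0}$ through Case (a-i) data and compare (\ref{AAA}) with (\ref{61-38}): both $A(p)$ and $c(p)$ are linear combinations of $\sum_{i}\zeta(a_{i}(p)+p)$ and $\sum_{i}\zeta(a_{i}(p)-p)$ plus $p$-terms continuous at $p_{0}$, and the constraint (\ref{61-37}) that $\sum_{i}a_{i}(p)$ stays bounded forces the singular contributions from the colliding pair $a_{N-1}(p),a_{N}(p)\to\pm p_{0}$ to cancel in the same pattern in both quantities. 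Hence convergence of $c(p)$ transfers to convergence of $A(p)$. The analogous argument using (\ref{c}) of Theorem \ref{thm3} handles the case when GLE$(\mathbf{n},p,A,\tau)$ is not completely reducible.
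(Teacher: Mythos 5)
Your linear-system step ($I_{\mathbf{n}}\Rightarrow A,B$ by evaluating \eqref{5031} at two points $z_{1},z_{2}$ with $\zeta(z_{1}+p_{0})-\zeta(z_{1}-p_{0})\neq\zeta(z_{2}+p_{0})-\zeta(z_{2}-p_{0})$) is correct, and $A\Rightarrow B$ via \eqref{101} is immediate. The genuine gaps are in the two halves of the $c(p)$-equivalence, which is the only nontrivial content of the proposition and which the paper settles with a single identity you never use: taking the logarithmic derivative of $y_{\mathbf{a}(p),c(p)}$ gives \eqref{72}, namely $I_{\mathbf{n}}(z;p,A(p),\tau)=c(p)^{2}+2c(p)D(z;\mathbf{a}(p),p)+D(z;\mathbf{a}(p),p)^{2}+E(z;\mathbf{a}(p),p)$, where $D,E$ from \eqref{78}--\eqref{79} converge locally uniformly off the limiting singular set because of the standing subsequence assumption \eqref{ajconverge}; evaluating at two points with distinct limits of $D$ then yields ``$I_{\mathbf{n}}$ converges iff $c(p)$ converges'' in both directions at once, with no case distinctions. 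Your forward direction instead asserts that ``continuous dependence of ODE solutions on parameters'' makes the common eigenfunction $y_{1}(z;A(p),W(p))$ converge under a suitable normalization. That principle applies to initial-value problems, not to a solution singled out by a global monodromy condition; to make it rigorous you would need the polynomial dependence of $\Phi_{e}(z;A)$ from Theorem 2.A together with \eqref{eq219}, an argument you do not supply. Moreover, convergence of $\mathbf{a}(p)$ is precisely the subsequence hypothesis \eqref{ajconverge}; it is not a free consequence of eigenfunction convergence.

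The reverse direction $c\Rightarrow A$ is also incomplete. Formulas \eqref{61-38} and \eqref{AAA} are proved only for completely reducible equations, and \eqref{AAA} only in Case (a-i); along the sequence $p\to p_{0}$ infinitely many $p$ may fall in Case (a-ii)/(a-iii) or have non-completely-reducible monodromy, which you only gesture at. More seriously, your cancellation argument compares $c\sim\tfrac12(\Sigma_{+}+\Sigma_{-})$ with $A\sim\tfrac12(\Sigma_{+}-\Sigma_{-})$, where $\Sigma_{\pm}:=\sum_{i}\zeta(a_{i}(p)\pm p)$, and nothing you say excludes the scenario in which some $a_{i}(p)\to p_{0}$ while simultaneously some $a_{j}(p)\to -p_{0}$, so that $\Sigma_{+}$ and $\Sigma_{-}$ each diverge with the divergences cancelling in the sum (hence $c$ converges) but not in the difference; the appeal to \eqref{61-37} does not help, since $\sum_{i}a_{i}(p)$ lies in the compact torus and is ``bounded'' vacuously. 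Finally, your step $B\Rightarrow I_{\mathbf{n}}$ is circular as written: from \eqref{101} you only get that $A(p)$ is bounded with subsequential limits among the two roots of a limiting quadratic, and you cannot invoke $I_{\mathbf{n}}\Rightarrow A$ to upgrade to full convergence before knowing that $I_{\mathbf{n}}$ converges along the whole sequence (the paper also treats this step as immediate, but your explicit patch does not repair it). All of these difficulties disappear if you argue through \eqref{72} as the paper does.
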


\begin{proof}
From \eqref{5031}, it is easy to see that $I_{\mathbf{n}}(z;p,A(p),\tau)$ converges for $z\in E_\tau\setminus S$ if and only if $A(p)$
converges if and only if $B(p)$ converges. To study its relation with $c(p)$, we
recall the solution $y_{{\bf a}(p),c(p)}(z;p)$ of GLE$(\mathbf{n},p,A(p),
\tau)$ given in Proposition \ref{propp3}. Then
\begin{align*}
\frac{y_{{\bf a}(p),c(p)}'(z;p)}{y_{{\bf a}(p),c(p)}(z;p)}
=&c(p) +\sum_{j=1}^{N}\zeta( z-a_{j}(p))
-\sum_{k=0}^{3}n_{k}\zeta ( z-\tfrac{\omega_{k}}{2}) \\
& -\tfrac{1}{2}( \zeta (z+p) +\zeta(z-p))
\end{align*}
and so
\begin{align}
&I_{\mathbf{n}}(z;p,A(p),\tau)=\frac{y_{{\bf a}(p),c(p)}''(z;p)}{y_{{\bf a}(p),c(p)}(z;p)} =\left( \frac{y_{{\bf a}(p),c(p)}'}{y_{{\bf a}(p),c(p)}}\right)
^{\prime}+\left( \frac{y_{{\bf a}(p),c(p)}'}{y_{{\bf a}(p),c(p)}}\right) ^{2}  \label{72} \\
&=c(p)^2 +2c(p) D( z;\mathbf{a}(p), p) +D( z;\mathbf{a}(p), p)^2+E( z;\mathbf{a}(p), p) ,  \notag
\end{align}
where%
\begin{equation}
D( z;\mathbf{a}(p),p) :=\left[
\begin{array}{c}
\sum_{j=1}^{N}\zeta( z-a_{j}(p))
-\sum_{k=0}^{3}n_{k}\zeta ( z-\tfrac{\omega_{k}}{2}) \\
-\tfrac{1}{2}( \zeta (z+p) +\zeta(z-p))%
\end{array}
\right]   \label{78}
\end{equation}
and $E( z;\mathbf{a}(p),p)=D'( z;\mathbf{a}(p),p)$ is given by
\begin{equation}
E( z;\mathbf{a}(p),p): =\left[
\begin{array}{c}
\sum_{k=0}^{3}n_{k}\wp( z-\frac{\omega_{k}}{2})
-\sum_{j=1}^{N}\wp( z-a_{j}(p)) \\
+\frac{1}{2}( \wp(z+p)+\wp(z-p))%
\end{array}
\right].   \label{79}
\end{equation}
By (\ref{ajconverge}), both $D( z;\mathbf{a}(p),p)$ and $E( z;\mathbf{a}(p),p)$ converge uniformly for $z\in E_{\tau}\backslash (\{[a_{j}^{0}]\}_{j=1}^{N}\cup\{\pm [p_0]\}\cup E_{\tau}[2]) $. Thus, $I_{\mathbf{n}}( z;p,A(p),\tau)$ converges if and only if $c(p)$ converges.
\end{proof}

Now we discuss the limit $\mathbf{a}^{0}$ of $\mathbf{a}(p) $
as $p\rightarrow p_{0}\not \in E_{\tau}[ 2] $. There are two
cases: (i) the corresponding $B(p) \rightarrow B_{0}\in\mathbb{C}$; (ii) $%
B(p)\to\infty$. Case (i) is easy, because $B(p)\rightarrow B_0\in\mathbb{C}$ implies $A(p)$ converges to some $A_0\in \mathbb{C}$, i.e. GLE$({\bf n},p,A(p),\tau)$ converges to GLE$({\bf n},p_0,A_0,\tau)$, and so $\lim_{p\to p_0}\mathbf{a}(p)=\mathbf{a}%
^0\in Y_{{\bf n},p_0}(\tau)$. We discuss case (ii)
in the following proposition.

\begin{proposition}
\label{prop5-3}Suppose $\mathbf{a}( p) \to \mathbf{a}%
^{0}$ and $B( p) \rightarrow \infty$ as $p\rightarrow p_{0}\not
\in E_{\tau}[ 2] $. Then $\mathbf{a}^0\in\{\infty_{\pm}(p_0)\}$, where $\infty_{\pm}(p)$ is defined in (\ref{+}).
\end{proposition}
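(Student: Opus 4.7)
The plan is to exploit two different descriptions of $I_{\mathbf{n}}(z;p,A,\tau)$: on one hand, \eqref{5031}--\eqref{101} expresses it as an explicit quadratic polynomial in $A$; on the other, identity \eqref{72} writes it as $c^{2}+2cD+D^{2}+E$, whose $z$-singular structure is encoded in the tuple $\mathbf{a}(p)$. Matching leading and subleading orders as $A\to\infty$ will force $\mathbf{a}^{0}$ into the desired form.

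From \eqref{101} the assumption $B(p)\to\infty$ gives $A(p)\to\infty$, and Proposition \ref{prop5-0} then yields $c(p)\to\infty$. Expanding \eqref{5031} as a polynomial in $A$ produces
\begin{equation*}
I_{\mathbf{n}}(z;p,A,\tau)=A^{2}+A\,f(z;p)+g(z;p),\qquad f(z;p):=\zeta(z+p)-\zeta(z-p)-\zeta(2p),
\end{equation*}
where $g(z;p)$ is independent of $A$. Fix any $z_{0}\in E_{\tau}$ away from $E_{\tau}[2]\cup\{\pm[p_{0}]\}\cup\{[a_{j}^{0}]\}_{j}$, so that $D(z_{0};\mathbf{a}(p),p)$ and $E(z_{0};\mathbf{a}(p),p)$ remain bounded. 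Equating the two expressions for $I_{\mathbf{n}}(z_{0};p,A(p),\tau)$ and comparing the leading order in $A$ forces $c(p)/A(p)$ to be bounded, and then, after passing to a subsequence compatible with $\mathbf{a}(p)\to\mathbf{a}^{0}$, to converge to some $\varepsilon\in\{\pm 1\}$.

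Next I would write $c=\varepsilon A+r$ with $r=o(A)$, substitute into \eqref{72}, cancel the $A^{2}$ contribution, and divide by $A$. In the limit $p\to p_{0}$, after possibly extracting a further subsequence, this yields
\begin{equation*}
f(z;p_{0})-2\varepsilon\,D(z;\mathbf{a}^{0},p_{0})=L
\end{equation*}
for every generic $z$, where $L:=\lim_{p\to p_{0}}\bigl(2\varepsilon\,r(p)+r(p)^{2}/A(p)\bigr)$ is a constant independent of $z$. Using the definition \eqref{78} of $D$, this is equivalent to
\begin{equation*}
\sum_{j=1}^{N}\zeta(z-a_{j}^{0})=\sum_{k=0}^{3}n_{k}\zeta\bigl(z-\tfrac{\omega_{k}}{2}\bigr)+\tfrac{1+\varepsilon}{2}\zeta(z+p_{0})+\tfrac{1-\varepsilon}{2}\zeta(z-p_{0})+\mathrm{const.}
\end{equation*}

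A residue calculation then closes the argument: each $\zeta(z-a)$ has a simple pole at $z\equiv a\ (\mathrm{mod}\ \Lambda_{\tau})$ with residue $1$ and is holomorphic elsewhere, so matching poles on the two sides forces $\{[a_{j}^{0}]\}$ to consist of $n_{k}$ copies of $\tfrac{\omega_{k}}{2}$ for each $k\in\{0,1,2,3\}$, plus exactly one extra point equal to $[-p_{0}]$ if $\varepsilon=1$ and to $[p_{0}]$ if $\varepsilon=-1$. In either case $\mathbf{a}^{0}\in\{\infty_{+}(p_{0}),\infty_{-}(p_{0})\}$ as defined in \eqref{+}. The main obstacle is the asymptotic matching — specifically, showing that the $O(1)$ error produced after dividing by $A$ is genuinely $z$-independent in the limit — while the concluding residue step is then routine bookkeeping.
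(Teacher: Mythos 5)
Your argument is correct and is essentially the paper's: both equate the explicit $A$-quadratic form of $I_{\mathbf{n}}(z;p,A,\tau)$ coming from \eqref{5031}--\eqref{101} with the expression \eqref{72} in terms of $c$, $D$, $E$, show that $c(p)/A(p)$ tends to a finite nonzero limit, pass to the limit $p\to p_{0}$, and identify $\mathbf{a}^{0}$ by matching poles. The only difference is technical rather than conceptual: the paper first differentiates in $z$, so the matching is performed on the $\wp$-identity \eqref{D2} and the value $\beta=\mp1$ emerges at the end, whereas you keep the undifferentiated identity, pin down $c/A\to\pm1$ at the outset (which also disposes of your worry about the constant $L$, since $2\varepsilon r+r^{2}/A$ is manifestly $z$-independent and converges because every other term does), and match residues of a $\zeta$-identity.
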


\begin{proof}
Since $B(p)\to\infty$, by Proposition 4.2, we have both $A(p)$ and $%
c(p)\rightarrow\infty$ as $p\rightarrow p_0.$ We claim that $%
c(p)/A(p)\rightarrow\beta$ for some $\beta\in \mathbb{C}\setminus\{0\}$ as $p\to p_0$.
In fact,
by differentiating both expressions \eqref{5051} and \eqref{72} of $I_{\bf n}(z; p, A(p), \tau)$, we obtain
\begin{equation}  \label{qq}
2c(p)E(z;\mathbf{a}(p),p)=A(p)(\wp(z-p)-\wp(z+p))+\text{ other
terms,}
\end{equation}
where other terms remain bounded outside the singularities $\{[a_{j}^{0}]\}_{j=1}^{N}\cup\{\pm [p_0]\}\cup E_{\tau}[2]$ as $p\rightarrow p_0$. Therefore, $
c(p)/A(p)\to\beta\neq 0$. Since $\mathbf{a}^0=\lim_{p\rightarrow p_0}\mathbf{a%
}(p)$, \eqref{qq} yields $2\beta E(z;\mathbf{a}^0,p_0)=\wp(z-p_0)-\wp(z+p_0)$, and it follows from (\ref{79}) that
\begin{align}
& 2\beta \left[
\begin{array}{c}
\sum_{k=0}^{3}n_{k}\wp( z-\frac{\omega_{k}}{2})
-\sum_{j=1}^{N}\wp( z-a_{j}^0) \\
+\frac{1}{2}( \wp(z+p_0)+\wp(z-p_0))%
\end{array}
\right]  \label{D2} \\
& =\wp(z-p_0)-\wp(z+p_0) .  \notag
\end{align}
From (\ref{D2}), we have  two possibilities: One is
$\mathbf{a}^0 =\infty_{+}( p_{0})$ and $\beta=-1$;
the other one is
$\mathbf{a}^{0}=\infty_{-}( p_{0})$ and $\beta=1$.
This completes the proof.
\end{proof}

\begin{proof}[{Proof of Theorem \protect\ref{Hyper} }]
Obviously, Theorem 4.1-(i) follows from Propositions \ref{prop5-0}-\ref{prop5-3} easily.

(ii). The above argument shows that $\lim_{A\to\infty}i_{{\bf n}, p}(A,\pm W)=\infty_{\pm}(p)$ (by renaming $W,-W$ if necessary). As pointed out in (\ref{eqgamma}), $\overline{\Gamma_{\mathbf{n,}p}}\backslash
\Gamma_{\mathbf{n,}p}$ consists of two points $\infty_{\pm}:=\lim_{A\to\infty}(A,\pm W)$. Thus the embedding $i_{\mathbf{n,}p}$ has a natural extension to
$\bar{\imath}_{\mathbf{n,}p}:\overline{\Gamma_{\mathbf{n,}p}}$ $\rightarrow
\overline{Y_{\mathbf{n},p}}$ by defining $\bar{\imath
}_{\mathbf{n,}p}(\infty_{\pm}) : =\infty_{\pm}(p)$.
\end{proof}

We have the following corollary.

\begin{corollary}\label{coro4-4} The map
$\sigma_{\mathbf{n,}p}$ can be extended to be a finite morphism (still denoted by $\sigma_{\mathbf{n,}p}$) from the irreducible curve $\overline{%
\Gamma_{\mathbf{n,}p}}$ to $E_{\tau}$ such that%
\begin{equation*}
\underset{(A,\pm W) \to\infty_{\pm}}{\lim}\sigma _{\mathbf{%
n,}p}( A,\pm W) =\pm [p].
\end{equation*}
\end{corollary}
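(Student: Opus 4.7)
The plan is to obtain the extension in two pieces: first extend the embedding $i_{\mathbf{n},p}$ of $\Gamma_{\mathbf{n},p}$ into $\mathrm{Sym}^NE_\tau$, and then compose with the addition map $\{a_1,\dots,a_N\}\mapsto \sum_{i=1}^N[a_i]-\sum_{k=1}^3[\tfrac{n_k\omega_k}{2}]$, which is already a holomorphic morphism from all of $\mathrm{Sym}^NE_\tau$ to $E_\tau$. The first piece is already given by Theorem \ref{Hyper}(ii): we have $\bar\imath_{\mathbf{n},p}(\infty_\pm)=\infty_\pm(p)$. Thus define
\[
\bar\sigma_{\mathbf{n},p}(\infty_\pm) \;:=\; \Bigl(\textstyle\sum_{k=1}^3 n_k\tfrac{\omega_k}{2}\;\pm\;p\Bigr)\;-\;\textstyle\sum_{k=1}^3[\tfrac{n_k\omega_k}{2}] \;=\; \pm[p],
\]
which is exactly the required limiting value, since by the formula \eqref{+} for $\infty_\pm(p)$ the multiset sum consists of $n_0$ copies of $0$, $n_k$ copies of $\omega_k/2$ ($k=1,2,3$), and one extra $\pm p$.

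Next I would verify that this set-theoretic extension is actually a morphism, not merely a continuous map. For any $(A(t),W(t))\to\infty_\pm$ on $\overline{\Gamma_{\mathbf{n},p}}$ as $t\to 0$, Propositions \ref{prop5-0}--\ref{prop5-3} (together with Theorem \ref{Hyper}(i)) say that $\mathbf{a}(t)\to\infty_\pm(p)$ in $\mathrm{Sym}^NE_\tau$. Applying the (holomorphic) addition map to this convergent family and using the compactness of $\mathrm{Sym}^NE_\tau$, we conclude $\sigma_{\mathbf{n},p}(A(t),W(t))\to\pm[p]$ in $E_\tau$. Since $\overline{\Gamma_{\mathbf{n},p}}$ is smooth at $\infty_\pm$ by \eqref{eqgamma} and $\bar\sigma_{\mathbf{n},p}$ is holomorphic on the complement of these two points and continuous everywhere, Riemann's removable singularity theorem upgrades $\bar\sigma_{\mathbf{n},p}$ to a holomorphic map in a neighborhood of each $\infty_\pm$. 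Here the main (minor) technical point is that $\overline{\Gamma_{\mathbf{n},p}}$ may be singular elsewhere (when $Q_{\mathbf{n},p}(A;\tau)$ has multiple roots), but this does not affect the analysis at the two points $\infty_\pm$, and on the nodal/normalization side one simply pulls the map back to the normalization to obtain a well-defined morphism between complete curves.

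Finally, finiteness of $\bar\sigma_{\mathbf{n},p}$ is automatic: $\overline{\Gamma_{\mathbf{n},p}}$ is a complete irreducible curve and $E_\tau$ is a complete curve, so any nonconstant morphism between them is finite (fibers are the zero sets of a nonzero holomorphic section of an effective divisor class). Nonconstancy is visible already at infinity: since $p\notin E_\tau[2]$ we have $[p]\neq[-p]$ in $E_\tau$, hence $\bar\sigma_{\mathbf{n},p}(\infty_+)=[p]\neq[-p]=\bar\sigma_{\mathbf{n},p}(\infty_-)$, and the map is not constant. This gives a well-defined finite morphism $\bar\sigma_{\mathbf{n},p}:\overline{\Gamma_{\mathbf{n},p}}\to E_\tau$ with the asserted limiting behavior, and we rename it $\sigma_{\mathbf{n},p}$. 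The only genuinely content-bearing step was the convergence $\mathbf{a}(t)\to\infty_\pm(p)$ along fibers going to infinity, and that has already been done in the previous section.
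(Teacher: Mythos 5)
Your proposal is correct and follows essentially the same route as the paper: the paper deduces this corollary directly from Theorem \ref{Hyper} (the extension $\bar{\imath}_{\mathbf{n},p}(\infty_{\pm})=\infty_{\pm}(p)$ combined with the holomorphic addition map on Sym$^N E_\tau$, whose value at $\infty_{\pm}(p)$ is $\pm[p]$). Your added details on removable singularities, normalization, and nonconstancy via $[p]\neq[-p]$ are consistent with the paper's framework and do not change the argument.
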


\section{The limiting of $Y_{\mathbf{n},p}(\tau) $ as $%
p\rightarrow \frac{\omega_{k}}{2}$}

\label{limiting}

\subsection{The counterpart of the above theory for $H({\bf n},B,\tau)$}
Before stating the main result of this section, we recall the following generalized Lam\'{e} equation (denoted by $H({\bf n},B,\tau)$)
\begin{equation}  \label{eq21-5}
y^{\prime \prime }(z)=I_{\mathbf{n}}(z;B,\tau)y(z),\quad z\in\mathbb{C}.
\end{equation}
where
\begin{equation}\label{eq21-6}
I_{\mathbf{n}}(z;B,\tau):=\sum_{k=0}^3n_k(n_k+1)\wp(z+\tfrac{\omega_k}{2}|\tau)+B,
\end{equation}
and $n_k\in\mathbb{Z}_{\geq 0}$ for all $k$.

There is a counterpart of the theory about GLE$({\bf n},p,A,\tau)$ established in previous sections for $H({\bf n},B,\tau)$, and the proof is simpler due to the absence of singularities $\pm p\notin E_{\tau}[2]$. Therefore, we only write down the conclusions without any details of the proofs. Remark that part of the statements listed below can be found in \cite{GW1,Takemura1,Takemura3} and references therein. In this section, we denote $\hat{N}:=\sum_{k}n_k$.

(i) Any solution of $H({\bf n},B,\tau)$ is meromorphic in $\mathbb{C}$. The corresponding second symmetric product equation (\ref{eq23}) has a unique even elliptic solution $\hat{\Phi}_{e}(z;B)$ expressed by
\begin{equation}
\hat{\Phi}_{e}(z;B)=\hat{C}_{0}(B)  +\sum_{k=0}^{3}\sum
_{j=0}^{n_{k}-1}\hat{b}_{j}^{(k)}(B)\wp(z+\tfrac{\omega_{k}}{2})^{n_{k}-j}
\label{3rd1}%
\end{equation}
where $\hat{C}_{0}(B), \hat{b}_{j}^{(k)}(B)$ are all polynomials in $B$ with $\deg \hat{C}_{0}>\max_{j,k} \deg\hat{b}_{j}^{(k)}$ and the leading coefficient of $\hat{C}_{0}(B)$
being $\frac{1}{2}$. Moreover, $\hat{\Phi}_{e}(z;B)=\hat{y}_1(z;B)$ $\hat{y}_1(-z;B)$, where $\hat{y}_1(z;B)$ is a common eigenfunction of the monodormy matrices of $H({\bf n},B,\tau)$ and up to a constant, can be written as
\begin{equation}\label{yby}\hat{y}_1(z;B)=y_{\bf a}(z):=\frac{e^{c({\bf a})z}\prod_{i=1}^{\hat{N}}\sigma(z-a_i)}{\prod_{k=0}^3\sigma(z-\frac{\omega_k}{2})^{n_k}}.\end{equation}

(ii) Let $\hat{W}$ be the Wroskian of $\hat{y}_1(z;B)$ and $\hat{y}_1(-z;B)$ (and denote $\hat{y}_1(z;B)$ by $\hat{y}_1(z;B,\hat{W})$), then $\hat{W}^2=Q_{\bf n}(B;\tau)$, where
$$Q_{\bf n}(B;\tau):=\hat{\Phi}_{e}'(z;B)^2-2\hat{\Phi}_{e}(z;B)\hat{\Phi}_{e}^{\prime \prime }(z;B)+4I_{\mathbf{n}%
}(z;B,\tau)\hat{\Phi}_{e}(z;B)^2$$
is a polynomial in $B$ with \emph{odd degree} and independent of $z$. Define the hyperelliptic curve $\Gamma_{\bf n}(\tau)$ by
\[\Gamma_{\bf n}(\tau):=\{(B, \hat{W})\,|\,\hat{W}^2=Q_{\bf n}(B;\tau)\}.\]
Then the map $i_{\bf n}: \Gamma_{\bf n}(\tau)\to \text{Sym}^{\hat{N}} E_{\tau}$ defined by
\[i_{\bf n}(B, \hat{W}):=\{[a_1],\cdots, [a_{\hat{N}}]\}\]
is an embedding, where $\{[a_1],\cdots, [a_{\hat{N}}]\}$ is uniquely determined by $\hat{y}_1(z;B,\hat{W})$ via (\ref{yby}). Let $Y_{{\bf n}}(\tau)$ be the image of $\Gamma_{{\bf n}}(\tau)$ in Sym$^{\hat{N}}E_\tau$
under $i_{{\bf n}}$, and $X_{{\bf n}}(\tau)$ be the image of $\{(B,\hat{W})\in\Gamma_{{\bf n}}|\hat{W}\neq
0\}$ under $i_{{\bf n}}$.

(iii) $\overline{X_{\mathbf{n}}(  \tau )  }=\overline
{Y_{\mathbf{n}}(  \tau )  }=Y_{\mathbf{n}}(\tau)
\cup\{\infty_0\}  $ where
\begin{equation}\label{infty0}
\infty_0:=\bigg(  \overset{n_{0}}{\overbrace{0,\cdot \cdot \cdot,0}}%
,\overset{n_{1}}{\overbrace{\frac{\omega_{1}}{2},\cdot \cdot \cdot,\frac
{\omega_{1}}{2}}},\overset{n_{2}}{\overbrace{\frac{\omega_{2}}{2},\cdot
\cdot \cdot,\frac{\omega_{2}}{2}}},\overset{n_{3}}{\overbrace{\frac{\omega_{3}%
}{2},\cdot \cdot \cdot,\frac{\omega_{3}}{2}}}\bigg)  .
\end{equation}
Furthermore, the map $i_{\mathbf{n}}:\Gamma_{\mathbf{n}}\rightarrow
Y_{\mathbf{n}}$ has a natural extension to $\bar{\imath}_{\mathbf{n}%
}:\overline{\Gamma_{\mathbf{n}}}\rightarrow \overline{Y_{\mathbf{n}}\left(
\tau \right)  }$ by $\bar{\imath}_{\mathbf{n}}(
\infty):=\infty_0$ where $\{\infty\}:=\overline{\Gamma_{\mathbf{n,}p}}%
\backslash \Gamma_{\mathbf{n,}p}$.

(iv)
The embedding $i_{\mathbf{n}}$ also induces the addition map $\sigma
_{\mathbf{n}}:\Gamma_{\mathbf{n}}\rightarrow E_{\tau}$ by
\[
\sigma_{\mathbf{n}}(  B,\hat{W}) : =\sum_{i=1}^{\hat{N}}[
a_{i}]  -\sum_{k=1}^{3}[\tfrac{n_{k}\omega_{k}}{2}].%
\]
Then $\sigma_{\mathbf{n}}$ can be extended as a finite morphism from the irreducible curve
$\overline{\Gamma_{\mathbf{n}}(\tau)}$ to $E_{\tau}$ such that%
\begin{equation}\label{bwlimit}
\underset{(  B,\hat{W})  \rightarrow \infty}{\lim}\sigma
_{\mathbf{n}}(  B,\hat{W})  =0.
\end{equation}

\subsection{The limit of $Y_{{\bf n}, p}$ as $p\to \frac{\omega_k}{2}$}
For any $\mathbf{n}=(n_0,n_1,n_2,n_3),$ we define $\mathbf{n}_{k}^{\pm}:=
( n_{k,0}^{\pm},n_{k,1}^{\pm},n_{k,2}^{\pm },n_{k,3}^{\pm}) $, where $%
n_{k,i}^{\pm}=n_{i}$ for $i\not =k$ and $n_{k,k}^{\pm}=n_{k}\pm1$. Recall $N=\sum_k n_k+1$. In the
following, we always identify a point $\mathbf{a}=\{[ a_{1}]
,\cdot \cdot \cdot,[ a_{N-2}] \}\in \overline{Y_{\mathbf{n}%
_{k}^{-}}( \tau ) }$ by $\mathbf{a}=\{[ a_{1}]
,\cdot \cdot \cdot,[ a_{N-2}] ,[\frac{\omega_{k}}{2}],[\frac{%
\omega_{k}}{2}]\}$.

The main result of this section is the following theorem:

\begin{theorem}
\label{cor7}Let $\mathbf{n}=(n_{0},n_{1},n_{2},n_{3})$ with $n_k\geq 0$ for all $k$. Then%
\begin{equation*}
\lim_{p\to \frac{\omega_{k}}{2}}\overline{Y_{\mathbf{n},p}(
\tau) }=\overline{Y_{\mathbf{n}_{k}^{+}}( \tau ) }\cup
\overline{Y_{\mathbf{n}_{k}^{-}}( \tau ) }.
\end{equation*}
\end{theorem}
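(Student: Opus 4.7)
My plan is to adapt the degeneration framework of Section~\ref{closure}, with the singular limit $p\to\omega_k/2$ playing the role that $A\to\infty$ played there, and with the apparent-singularity constraint~\eqref{101} substituting for Proposition~\ref{prop5-0}. Fix a sequence $p_j\to\omega_k/2$ with $\mathbf{a}(p_j)\to\mathbf{a}^0$ in $\mathrm{Sym}^NE_\tau$, and write $\epsilon:=p_j-\omega_k/2$. The first step is to determine the asymptotic of $A(p)$. The singular expansions $\zeta(2p)=(2\epsilon)^{-1}+\eta_k+O(\epsilon^3)$, $\wp(2p)=(4\epsilon^2)^{-1}+O(\epsilon^2)$, and $\wp(p+\omega_k/2)=\epsilon^{-2}+O(\epsilon^2)$ fed into~\eqref{101} show that demanding boundedness of $I_{\mathbf{n}}(z;p,A(p),\tau)$ at generic $z$ forces $A(p)=\alpha/\epsilon+O(\epsilon)$ (with vanishing $\epsilon^{0}$-coefficient), where $\alpha$ solves $\alpha^{2}-\alpha/2-3/16-n_{k}(n_{k}+1)=0$, i.e.\ $\alpha\in\{n_{k}+\tfrac34,-n_{k}-\tfrac14\}$, with the matched divergence $B(p)=-\alpha\eta_{k}/\epsilon+B_0+O(\epsilon)$. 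The alternative case where $B(p_j)\to\infty$ outside this controlled regime will be handled by a direct analog of Proposition~\ref{prop5-3}, using the two expressions~\eqref{5031} and~\eqref{72} for $I_{\mathbf{n}}$, to conclude $\mathbf{a}^0=\infty_0$ of~\eqref{infty0}, which lies in $\overline{Y_{\mathbf{n}_k^+}}\cap\overline{Y_{\mathbf{n}_k^-}}$.

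For the generic case I would substitute the $A$-asymptotic into~\eqref{5031}. Using $(3/4)(\wp(z+p)+\wp(z-p))=(3/2)\wp(z+\omega_k/2)+O(\epsilon^2)$ and $A(\zeta(z+p)-\zeta(z-p))=\alpha\eta_{k}/\epsilon-2\alpha\wp(z+\omega_k/2)+O(1)$, the divergent $\alpha\eta_{k}/\epsilon$ cancels the matching divergence in $B$, leaving the limit potential $\sum_{j\neq k}n_j(n_j+1)\wp(z+\tfrac{\omega_j}{2})+[n_k(n_k+1)+\tfrac32-2\alpha]\wp(z+\tfrac{\omega_k}{2})+B_0$. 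The bracketed coefficient is $(n_k+1)(n_k+2)$ when $\alpha=-n_k-\tfrac14$ and $(n_k-1)n_k$ when $\alpha=n_k+\tfrac34$, identifying the limit equation as $H(\mathbf{n}_k^+,B_0,\tau)$ or $H(\mathbf{n}_k^-,B_0,\tau)$ respectively. Passing to the eigenfunction limit via~\eqref{exp1} and using $\sigma(z+\omega_k/2)=-e^{\eta_k z}\sigma(z-\omega_k/2)$, the branched factor degenerates as $\sqrt{\sigma(z-p)\sigma(z+p)}\to(\mathrm{const})\,e^{\eta_k z/2}\sigma(z-\omega_k/2)$, so $y_{\mathbf{a}(p),c(p)}(z;p)$ converges, up to a constant, to a $\sigma$-quotient of the form~\eqref{yby} associated with $H(\mathbf{n}_k^+,B_0,\tau)$, with $\hat N_k^+=N$ numerator zeros $\mathbf{a}^0$ and denominator $\sigma(z-\omega_k/2)^{n_k+1}\prod_{j\neq k}\sigma(z-\omega_j/2)^{n_j}$. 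When $\alpha=-n_k-\tfrac14$ we are in Case~(a-i) of Theorem~\ref{thm6.1}: no $a_i^0$ equals $\omega_k/2$ and $\mathbf{a}^0\in Y_{\mathbf{n}_k^+}(\tau)$. When $\alpha=n_k+\tfrac34$ we are in Cases~(a-ii)/(a-iii): the equalities $a_{N-1}(p)=a_N(p)=\pm p\to\omega_k/2$ cancel two numerator $\sigma$-factors against two denominator factors, reducing the genuine zero count to $\hat N_k^-=N-2$ and placing $\mathbf{a}^0$ in $\overline{Y_{\mathbf{n}_k^-}(\tau)}$ under the identification fixed at the start of Section~\ref{limiting}.

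For the reverse inclusion I would start from an arbitrary common eigenfunction of $H(\mathbf{n}_k^{\pm},B_0,\tau)$ and apply the implicit function theorem in $\epsilon$, with $A(p)$ self-consistently determined by~\eqref{101}, to produce common eigenfunctions of $\mathrm{GLE}(\mathbf{n},p,A(p),\tau)$ whose zero sets converge to the prescribed limit. The main obstacle I anticipate is the rigorous control of the eigenfunction limit, specifically excluding anomalous configurations in which $\omega_k/2$ enters $\mathbf{a}^0$ with multiplicity other than $0$ or $2$. This will be controlled by demanding that $c(p)$ from~\eqref{61-38} admit a limit in $\mathbb{C}\cup\{\infty\}$: each individual $a_i(p)\to\omega_k/2$ contributes divergent pieces $(a_i(p)\pm p)^{-1}$ that cancel only in the paired configuration $\delta_{N-1}=\delta_N=\pm\epsilon$ characteristic of Cases~(a-ii)/(a-iii), which is precisely the pair absorbed into the $\sigma(z-\omega_k/2)^2$ cancellation in Step two.
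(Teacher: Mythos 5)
Your overall skeleton coincides with the paper's: the dichotomy $A(p)=\alpha\,\epsilon^{-1}+O(\epsilon)$ with $\alpha\in\{-n_k-\tfrac14,\,n_k+\tfrac34\}$ forced by boundedness of the potential, the identification of the limit equations $H(\mathbf{n}_k^{\pm},B_0,\tau)$, the divergent case giving the common point at infinity, and the converse by prescribing $A(p)$ (this is Propositions \ref{prop11}, \ref{prop3}, \ref{prop4}, written out there for $k=0$; your observation that for $k\neq0$ it is the potential and $c(p)$, not $B(p)\sim-\alpha\eta_k\epsilon^{-1}$, that converge is a correct adaptation). The genuine gap is in the step that decides how many of the limit points $a_i^0$ sit at $\omega_k/2$. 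You attribute the collapse of two zeros in the $\alpha=n_k+\tfrac34$ regime to Cases (a-ii)/(a-iii) of Theorem \ref{thm6.1}, i.e.\ to the exact equalities $a_{N-1}(p)=a_N(p)=\pm p$ at finite $p$. That is not the mechanism: by Theorem 2.A those cases occur only for the finitely many $A$ which are zeros of the polynomial $d(A)$, so a generic degenerating family is in Case (a-i) for every $p$ (no $a_i(p)$ equals $\pm p$), and yet two of the $a_i(p)$ merge into $\omega_k/2$ only in the limit; there is no cancellation of $\sigma$-factors before the limit. Your backup criterion via $c(p)$ is also false as stated: by \eqref{61-38} the divergent contributions of two zeros approaching $\omega_k/2$ cancel as soon as $(a_{N-1}(p)-p)^{-1}+(a_N(p)-p)^{-1}$ stays bounded, which permits balanced but unequal perturbations and by no means forces $\delta_{N-1}=\delta_N=\pm\epsilon$ — this is precisely the phenomenon recorded in the remark following Proposition \ref{thmpole}. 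Moreover a $c(p)$-cancellation count alone cannot exclude, say, two balanced pairs collapsing, so it does not pin the multiplicity at $0$ or $2$.

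The step is closed by a local exponent comparison at the limiting equation, which your setup already makes available. In the good regime $c(p)$ converges, so $y_{\mathbf{a}(p),c(p)}(z;p)$ converges to a nonzero common eigenfunction $y_{\mathbf{a}^0}(z)$ of $H(\mathbf{n}_k^{\pm},B_0,\tau)$ written with denominator $\sigma(z-\tfrac{\omega_k}{2})^{n_k+1}\prod_{j\neq k}\sigma(z-\tfrac{\omega_j}{2})^{n_j}$, i.e.\ with a priori pole order $n_k+1$ at $\tfrac{\omega_k}{2}$. When the limit is $H(\mathbf{n}_k^{-},B_0,\tau)$, whose local exponents at $\tfrac{\omega_k}{2}$ are $-(n_k-1)$ and $n_k$, this forces at least two of the $a_i^0$ to equal $\tfrac{\omega_k}{2}$; deleting two of them turns $y_{\mathbf{a}^0}$ into exactly the representation \eqref{yby} for $\mathbf{n}_k^{-}$, which is membership in $\overline{Y_{\mathbf{n}_k^{-}}(\tau)}$ under the stated identification (and in the $\alpha=-n_k-\tfrac14$ case no deletion, and no claim that $a_i^0\neq\tfrac{\omega_k}{2}$, is needed). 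Two smaller points: the divergent case needs the analogue of Proposition \ref{prop4}, with its two-tier case analysis on $A(p)p$ and $c(p)/(A(p)p)$, rather than Proposition \ref{prop5-3} quoted verbatim, though the method is the same; and in the converse direction \eqref{101} determines $B$ from $A$, not $A(p)$ itself — one simply sets $A(p)=\alpha\epsilon^{-1}+\alpha_1^{\pm}\epsilon$ with $\alpha_1^{\pm}$ given by \eqref{qe} and runs the forward analysis, so no implicit function theorem is needed.
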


In the following, we will give the complete proof for the case $p\rightarrow
0$. For other cases the proof is similar. Let $p$ be a sequence of points
in $E_{\tau}$ and a sequence $( A( p) ,W( p)
)\in\Gamma_{\mathbf{n},p}$ with the corresponding $%
\{[a_{1}( p) ],\cdot \cdot \cdot,[a_{N}( p) ]\}\in Y_{\mathbf{n},p}$ satisfying $%
([a_{1}( p) ],\cdot \cdot \cdot,[a_{N}( p) ])
\to ([a_{1}^{0}],\cdot \cdot \cdot,[a_{N}^{0}])$. We assume that up to a subsequence if
necessary, $A( p) \rightarrow A_{0}$ $\in \mathbb{C\cup}\{
\infty \} $ as $p\to 0$. Then the same proof as Proposition \ref{prop5-0} implies
\begin{equation}
\begin{array}{l}
I_{\mathbf{n}}( z;p,A(p),\tau) \text{ converges } \text{if and only if
}c( \mathbf{a}( p) ) \text{ converges} \\
\text{if and only if }B( p) \text{
converges as }p\rightarrow 0.%
\end{array}
\label{6-1}
\end{equation}
Indeed, we have a more precise statement. Recall $e_k:=\wp(\frac{\omega_k}{2})$ for $k=1,2,3$.

\begin{proposition}
\label{prop11}$B(p) $ is convergent as $p\rightarrow0$ if and only if
\begin{equation}\label{app}A(p)=A^\pm(p):=\alpha_{0}^{\pm}p^{-1}+\alpha_1^\pm p+o(p)\end{equation} with
\begin{equation}\label{alph}\alpha_0^{+}:=-(\tfrac{1}{4}+n_0),\quad \alpha_0^{-}:=\tfrac{3}{4}+n_0\end{equation} and some $\alpha_1^{\pm}\in \mathbb{C}$. Furthermore, if (\ref{app}) holds, then $%
I_{\bf n}(z;p,A^\pm(p),\tau)$ converges to $I_{{\bf n}_{0}^\pm}(z;B^\pm,\tau)$, where $B^\pm$ and $\alpha_1^\pm$
are related by
\begin{equation}  \label{qe}
B^\pm=\mp(2n_0+1)\alpha^{\pm}_1-\sum\limits_{k=1}^3n_k(n_k+1)e_k.
\end{equation}
\end{proposition}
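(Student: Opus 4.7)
The plan is to prove the equivalence and the convergence statement by a direct Laurent-series analysis of the defining relation \eqref{101}. The starting point is the standard expansions near $p = 0$:
\begin{equation*}
\zeta(2p) = \tfrac{1}{2p} + O(p^{3}), \qquad \wp(2p) = \tfrac{1}{4p^{2}} + O(p^{2}),
\end{equation*}
\begin{equation*}
\wp(p) = \tfrac{1}{p^{2}} + O(p^{2}), \qquad \wp\bigl(p+\tfrac{\omega_{k}}{2}\bigr) = e_{k} + O(p^{2})\;\;\text{for}\;\;k=1,2,3.
\end{equation*}
A crucial observation is that every non-$A$ term on the right side of \eqref{101} depends only on even powers of $p$, so only the powers $p^{-2}, p^{0}, p^{2},\ldots$ appear in its Laurent expansion.

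I would first substitute the ansatz $A(p) = \alpha_{0}p^{-1} + \beta + \alpha_{1}p + o(p)$ into \eqref{101}. A preliminary size check rules out the cases when $A(p)$ grows slower or faster than $p^{-1}$: in either situation $A^{2} - \zeta(2p)A$ cannot cancel the $p^{-2}$ pole coming from the remaining terms, so the leading order of $A(p)$ must indeed be $\alpha_0 p^{-1}$. Matching the $p^{-2}$ coefficient of $B(p)$ to zero gives
\begin{equation*}
\alpha_{0}^{2} - \tfrac{1}{2}\alpha_{0} - \tfrac{3}{16} - n_{0}(n_{0}+1) = 0,
\end{equation*}
i.e.\ $(\alpha_{0} - \tfrac{1}{4})^{2} = (n_{0} + \tfrac{1}{2})^{2}$, whose two roots are exactly $\alpha_{0}^{\pm}$. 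By the evenness observation, the $p^{-1}$ coefficient of $B(p)$ comes solely from $A^{2} - \zeta(2p)A$ and equals $\beta(2\alpha_{0}^{\pm} - \tfrac{1}{2}) = \mp(2n_{0}+1)\beta$; since $2n_{0}+1 \neq 0$, convergence of $B(p)$ forces $\beta = 0$. Finally, with $\beta = 0$, the constant term of $B(p)$ becomes
\begin{equation*}
(2\alpha_{0}^{\pm} - \tfrac{1}{2})\alpha_{1}^{\pm} - \sum_{k=1}^{3} n_{k}(n_{k}+1)e_{k} = \mp(2n_{0}+1)\alpha_{1}^{\pm} - \sum_{k=1}^{3} n_{k}(n_{k}+1)e_{k},
\end{equation*}
which is exactly \eqref{qe}. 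This single calculation simultaneously yields both directions of the equivalence and the formula for $B^\pm$.

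For the convergence of the potential, I would Taylor-expand
\begin{equation*}
\tfrac{3}{4}\bigl(\wp(z+p) + \wp(z-p)\bigr) = \tfrac{3}{2}\wp(z) + O(p^{2}),\qquad \zeta(z+p) - \zeta(z-p) = -2\wp(z)p + O(p^{3}),
\end{equation*}
uniformly on compact subsets of $E_{\tau} \setminus \{0\}$. Multiplying the second expansion by $A^{\pm}(p) = \alpha_{0}^{\pm}p^{-1} + O(p)$ yields $-2\alpha_{0}^{\pm}\wp(z) + O(p^{2})$. Hence the coefficient of $\wp(z)$ in the limit potential is
\begin{equation*}
n_{0}(n_{0}+1) + \tfrac{3}{2} - 2\alpha_{0}^{\pm} = (n_{0}\pm 1)(n_{0}\pm 1 + 1) = n_{0,0}^{\pm}\bigl(n_{0,0}^{\pm} + 1\bigr),
\end{equation*}
by direct arithmetic. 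The other coefficients $n_{k}(n_{k}+1)\wp(z+\tfrac{\omega_{k}}{2})$ for $k=1,2,3$ are $p$-independent, and $B(p) \to B^{\pm}$ by Step~1. This gives $I_{\mathbf{n}}(z;p,A^{\pm}(p),\tau) \to I_{\mathbf{n}_{0}^{\pm}}(z;B^{\pm},\tau)$.

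The main difficulty is purely a matter of careful bookkeeping: one must correctly pair the two branches $\pm$ of $A^{\pm}$ with the two shifts $\mathbf{n}_{0}^{\pm}$. The correspondence is pinned down by the one arithmetic identity $n_{0}(n_{0}+1) + \tfrac{3}{2} - 2\alpha_{0}^{\pm} = (n_{0}\pm 1)(n_{0}\pm 1+1)$, so no deeper analytic input is required beyond the elementary Laurent and Taylor expansions of the Weierstrass functions.
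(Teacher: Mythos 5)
Your computations follow essentially the same route as the paper: both expand the constraint \eqref{101} near $p=0$ and Taylor-expand the potential, and your forward direction (given \eqref{app}, extract $B^\pm$ via \eqref{qe} and obtain the limit $I_{\mathbf{n}_0^\pm}(z;B^\pm,\tau)$, with the branch pairing fixed by $n_0(n_0+1)+\tfrac32-2\alpha_0^\pm=n_{0,0}^\pm(n_{0,0}^\pm+1)$) is correct and coincides with the paper's. The gap is in the converse implication, namely that convergence of $B(p)$ forces $A(p)$ to have the form \eqref{app}. You establish this only for $A(p)$ which a priori admits an expansion $\alpha_0p^{-1}+\beta+\alpha_1p+o(p)$; but the whole content of \eqref{app} is that $A(p)$ has exactly such a structured expansion, so positing the ansatz assumes a substantial part of what must be proved. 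Your ``preliminary size check'' only rules out $A(p)p\to0$ and $A(p)p\to\infty$; it does not show that $A(p)p$ converges, nor that the remainder $A(p)-\alpha_0^{\pm}p^{-1}$ is of the form $\alpha_1^{\pm}p+o(p)$ rather than, say, of order $p^{1/2}$ or oscillatory (in such cases $B(p)$ indeed diverges, but your case analysis never addresses them).

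The missing step is supplied by rewriting \eqref{101} as the exact identity
\begin{equation*}
B(p)=\frac{\bigl(A(p)p-\alpha_0^{+}\bigr)\bigl(A(p)p-\alpha_0^{-}\bigr)}{p^{2}}-\sum_{k=1}^{3}n_k(n_k+1)e_k+o(1),
\end{equation*}
which is precisely how the paper argues: if $B(p)$ converges, the numerator must be $O(p^{2})$, so $A(p)p$ approaches one of the two roots $\alpha_0^{\pm}$; since $\alpha_0^{+}-\alpha_0^{-}=-(2n_0+1)\neq0$, the other factor stays bounded away from zero, hence $A(p)p-\alpha_0^{\pm}=\lambda p^{2}+o(p^{2})$ with $\lambda$ determined by $\lim_{p\to0}B(p)$. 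This yields \eqref{app} (in particular the absence of a constant term and the existence of the coefficient $\alpha_1^{\pm}$) as a conclusion rather than a hypothesis, and gives \eqref{qe} at the same time. With this factorization, or any equivalent argument that derives the expansion of $A(p)$ instead of assuming it, your proof closes; all of your remaining computations agree with the paper's proof.
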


\begin{proof}
Recall (\ref{101}) and it yields
{\allowdisplaybreaks
\begin{align}
B(p) & =A(p)^{2}-\zeta( 2p) A(p)-\frac{3}{4}\wp(
2p) -\sum_{k=0}^{3}n_{k}( n_{k}+1) \wp( p+\tfrac{%
\omega_{k}}{2})  \label{74} \\
& =\frac{1}{p^{2}}\left( ( A(p)p) ^{2}-\frac{A(p)p}{2}-\frac{3}{16}%
-n_{0}( n_{0}+1) \right)  \notag \\
&\quad -\sum_{k=1}^{3}n_{k}( n_{k}+1) e_{k}+o( 1)  \notag\\
&=\frac{(A(p)p-\alpha_0^+)(A(p)p-\alpha_0^-)}{p^{2}} -\sum_{k=1}^{3}n_{k}( n_{k}+1) e_{k}+o( 1),\nonumber
\end{align}
}%
where $\alpha_{0}^{\pm}$ is defined in (\ref{alph}).
From here, we easily see that $B(p) $ converges as $p\to 0$ if and only if $A(p)$ satisfies (\ref{app}).

Now suppose (\ref{app}) holds, then (\ref{74}) implies $B(p)\to B^{\pm}$ as $p\to 0$. Furthermore,
\begin{align*}
&n_0(n_0+1)\wp(z) +\tfrac{3}{4}%
(\wp(z+p)+\wp(z-p))+A(p)(\zeta(z+p)-\zeta(z-p))\\
&=n_0(n_0+1)\wp(z) +\tfrac{3}{2}\wp(z)-2A(p)p\wp(z)+o(1)\\
&\to [n_0(n_0+1)+\tfrac{3}{2}-2\alpha_0^{\pm}]\wp(z)=n_{0,0}^{\pm}(n_{0,0}^{\pm}+1)\wp(z),
\end{align*}
where $n_{0,0}^{\pm}=n_{0}\pm 1$. Thus $%
I_{\bf n}(z;p,A^\pm(p),\tau)$ converges to $I_{{\bf n}_{0}^\pm}(z;B^\pm,\tau)$.\end{proof}

Now, we discuss the limit $\mathbf{a}^{0}$ of $\mathbf{a}(p) $
as $p\rightarrow0$. We divide two cases to discuss: (i) $B(p) \rightarrow
B_{0}\in \mathbb{C}$, and (ii) $B(p) \rightarrow
\infty$.

\begin{proposition}
\label{prop3}Suppose $B(p) \rightarrow
B_{0}\in \mathbb{C}$ as $p\rightarrow0$. Then either

\begin{itemize}
\item[(i)] $\mathbf{a}^{0}\in Y_{\mathbf{n}^{+}}( \tau ) $, where
$\mathbf{n}^{+}=\mathbf{n}_0^{+}=( n_{0}+1,n_{1},n_{2},n_{3}) $; or

\item[(ii)] $\mathbf{a}^{0}=\{[ a_{1}^{0}] ,\cdot \cdot \cdot,%
[ a_{N-2}^{0}] ,0,0\}$ and $\{ [ a_{1}^{0}] ,\cdot
\cdot \cdot,[ a_{N-2}^{0}] \} \in Y_{\mathbf{n}^{-}}(
\tau) $, where $\mathbf{n}^{-}=\mathbf{n}_0^{-}=(
n_{0}-1,n_{1},n_{2},n_{3}) $.
\end{itemize}
\end{proposition}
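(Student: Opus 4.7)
The plan is to pass to the limit $p\to 0$ in the explicit product formula (\ref{phi-a}) for the even elliptic solution $\Phi_e(z;A(p))$, and to invoke the uniqueness statement (Proposition \ref{propp1} and its analogue for $H(\mathbf{n}^{\pm},B^{\pm},\tau)$) to identify the limit with the canonical $\hat{\Phi}_e(z;B^{\pm})$ of the limiting equation.

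First, since $B(p)\to B_0$, Proposition \ref{prop11} yields $A(p)=\alpha_0^{\pm}/p+\alpha_1^{\pm}p+o(p)$ for one of the two branches, and $I_{\mathbf{n}}(z;p,A(p),\tau)\to I_{\mathbf{n}^{\pm}}(z;B^{\pm},\tau)$ uniformly on compact subsets of $E_\tau\setminus E_\tau[2]$, with $\mathbf{n}^{\pm}=\mathbf{n}_0^{\pm}$. By (\ref{6-1}), $c(p)\to c^0\in\mathbb{C}$. Passing to a subsequence, $\mathbf{a}(p)\to\mathbf{a}^0$ in $\text{Sym}^N E_\tau$ and one of the configurations (a-i), (a-ii), (a-iii) from Theorem \ref{thm6.1} persists along the subsequence.

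Next I take the limit of (\ref{phi-a}) configuration by configuration. In case (a-i), no $a_i(p)=\pm p$, so the factor $\sigma(z-p)\sigma(z+p)$ in the denominator tends to $\sigma(z)^2$ and combines with $\sigma(z)^{2n_0}$ to give $\sigma(z)^{2(n_0+1)}$, yielding
\[
\Phi_e(z;A(p))\longrightarrow \frac{\prod_{i=1}^{N}\sigma(z-a_i^0)\sigma(z+a_i^0)}{\sigma(z)^{2(n_0+1)}\prod_{k=1}^{3}\sigma(z-\tfrac{\omega_k}{2})^{n_k}\sigma(z+\tfrac{\omega_k}{2})^{n_k}},
\]
which is the form of $\hat{\Phi}_e(z;B^+)$ for $H(\mathbf{n}^+,B^+,\tau)$ with $\hat{N}^+=N$. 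By the analogue of Proposition \ref{propp1} for $H(\mathbf{n}^+,B^+,\tau)$, this equals $\hat{\Phi}_e(z;B^+)$ up to a nonzero constant, hence $\mathbf{a}^0=\bar{\imath}_{\mathbf{n}^+}(B^+,\hat{W}^+)\in Y_{\mathbf{n}^+}(\tau)$, which is alternative (i). In cases (a-ii) and (a-iii), two of the $a_i(p)$ equal $\pm p$ and tend to $0$; they contribute an extra $[\sigma(z-p)\sigma(z+p)]^2$ to the numerator of (\ref{phi-a}), which absorbs $\sigma(z-p)\sigma(z+p)$ from the denominator and leaves one $\sigma(z-p)\sigma(z+p)\to\sigma(z)^2$ in the numerator. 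Combining with $\sigma(z)^{2n_0}$ in the denominator yields
\[
\Phi_e(z;A(p))\longrightarrow \frac{\prod_{i=1}^{N-2}\sigma(z-a_i^0)\sigma(z+a_i^0)}{\sigma(z)^{2(n_0-1)}\prod_{k=1}^{3}\sigma(z-\tfrac{\omega_k}{2})^{n_k}\sigma(z+\tfrac{\omega_k}{2})^{n_k}},
\]
the form of $\hat{\Phi}_e(z;B^-)$ for $H(\mathbf{n}^-,B^-,\tau)$ with $\hat{N}^-=N-2$. Uniqueness again gives $\{[a_1^0],\ldots,[a_{N-2}^0]\}\in Y_{\mathbf{n}^-}(\tau)$, which is alternative (ii).

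The main obstacle is normalization: since $A(p)\to\infty$, the Theorem 2.A normalization of $\Phi_e$ (with leading $A$-coefficient $\tfrac{1}{2}$) blows up, so I must rescale $\Phi_e(z;A(p))$ by a suitable power of $p$ (or, equivalently, normalize $y_{\mathbf{a}(p),c(p)}$ at a fixed regular base point) before passing to the limit, so that the limit is a finite nonzero meromorphic function on which uniqueness can be applied. Once this rescaling is in place, matching the pole order of the limit at $z=0$ to the local exponent $-(n_0\pm 1)$ of the limiting equation pins down the branch, and the dichotomy of the statement is forced.
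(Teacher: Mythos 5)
Your overall strategy---pass to the limit in an explicit product representation and identify the limit via the uniqueness of the even elliptic solution of the limiting equation---is in the same spirit as the paper's proof (the paper passes to the limit in the common eigenfunction $y_{\mathbf{a}(p),c(p)}$ of \eqref{exp1}, using that $c(p)$ converges by \eqref{6-1}, rather than in $\Phi_e$). However, there is a genuine gap in how you decide which alternative occurs: you attach alternative (i) to configuration (a-i) of Theorem \ref{thm6.1} and alternative (ii) to configurations (a-ii)/(a-iii). These are two different dichotomies. The dichotomy in the proposition is governed by the two branches of Proposition \ref{prop11}, i.e.\ whether $A(p)p\to\alpha_0^+$ or $A(p)p\to\alpha_0^-$, equivalently whether $I_{\mathbf{n}}(z;p,A(p),\tau)$ converges to $I_{\mathbf{n}^+}(z;B^+,\tau)$ or to $I_{\mathbf{n}^-}(z;B^-,\tau)$; the trichotomy (a-i)--(a-iii) concerns only whether $\pm p$ are poles or zeros of $\Phi_e(z;A(p))$ for fixed $p$, and is logically independent of the branch. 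Concretely, for each fixed $p$ the cases (a-ii)/(a-iii) occur only for the finitely many zeros of the polynomial $d(A)$ of Theorem 2.A, so one can choose a sequence on the $\alpha_0^-$ branch (e.g.\ a small $o(p)$-perturbation of $A(p)=(\tfrac34+n_0)p^{-1}+\alpha_1^-p$ as in the proof of Theorem \ref{cor7}) which is in case (a-i) for every $p$: no $a_i(p)$ equals $\pm p$, yet two of the $a_i(p)$ tend to $0$. For such a sequence your case-(a-i) argument fails twice: the displayed limit has denominator $\sigma(z)^{2(n_0+1)}$ only before the cancellation produced by the factors $\sigma(z\mp a_i^0)=\sigma(z)$ coming from $a_i^0=0$, and, more importantly, the limit is an even elliptic solution of the symmetric product equation of $H(\mathbf{n}^-,B^-,\tau)$, not of $H(\mathbf{n}^+,B^+,\tau)$, so the uniqueness statement you invoke cannot place $\mathbf{a}^0$ in $Y_{\mathbf{n}^+}(\tau)$; the correct conclusion there is alternative (ii). (Being ``of the form'' with $\sigma(z)^{2(n_0+1)}$ in the denominator does not by itself mean membership in $Y_{\mathbf{n}^+}(\tau)$; one must know which equation the limit actually solves.)

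The repair is to argue branch by branch, making no reference to (a-i)--(a-iii), which is exactly what the paper does. Since $B(p)\to B_0$, \eqref{6-1} gives $c(p)\to c_0$, so $y_{\mathbf{a}(p),c(p)}(z;p)$ converges to $y_{\mathbf{a}^0}(z)=e^{c_0z}\prod_{j=1}^N\sigma(z-a_j^0)\big/\big[\sigma(z)^{n_0+1}\prod_{k=1}^3\sigma(z-\tfrac{\omega_k}{2})^{n_k}\big]$, a solution of the limiting equation. On the $\alpha_0^+$ branch this limiting equation is $H(\mathbf{n}^+,B^+,\tau)$ and $y_{\mathbf{a}^0}$ is already of the form \eqref{yby} with $\hat N=\sum_k n_k^+=N$, so $\mathbf{a}^0\in Y_{\mathbf{n}^+}(\tau)$ (alternative (i)). On the $\alpha_0^-$ branch the limiting equation is $H(\mathbf{n}^-,B^-,\tau)$, whose local exponents at $0$ are $-(n_0-1)$ and $n_0$; comparing with the order $n_0+1$ of the denominator of $y_{\mathbf{a}^0}$ at $z=0$ forces at least two of the $a_j^0$ to be $0$, and after removing two of them the remaining $N-2$ points are the zero divisor of a common eigenfunction of $H(\mathbf{n}^-,B^-,\tau)$, i.e.\ alternative (ii). Finally, your ``normalization obstacle'' is not actually an obstacle once you work with the product forms \eqref{exp1} or \eqref{phi-a}: since $\mathbf{a}(p)\to\mathbf{a}^0$ and $c(p)$ converges, these expressions converge as they stand, and no rescaling by powers of $p$ is needed; the blow-up concerns only the Theorem 2.A polynomial normalization, which you never have to use.
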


\begin{proof}
By Proposition \ref{prop11}, we see that $I_{\bf n}(z;p,A(p),\tau)$ converges to either (i) $I_{{\bf n}^+}(z;B^+,\tau)$
or (ii) $I_{{\bf n}^-}(z;B^-,\tau)$.

For the case (i), because $c( \mathbf{a}( p))
\rightarrow c_0$ for some $c_0\in\mathbb{C}$ by (\ref{6-1}), we
see that
\begin{equation}\label{yfff}
y_{\mathbf{a}( p),c( \mathbf{a}( p)) }( z;p)
\rightarrow y_{\mathbf{a}^{0}}(z) :=\frac{e^{c_0z}\prod_{j=1}^{N}\sigma( z-a_{j}^{0}) }{%
\sigma( z)^{n_{0}+1} \prod_{k=1}^{3}\sigma
( z-\frac{\omega_{k}}{2}) ^{n_{k}}},
\end{equation}
and $y_{\mathbf{a}^{0}}(z) $ is a solution to $H(\mathbf{n}%
^{+},B^{+},\tau)$. This implies $\mathbf{a}^{0}\in$ $Y_{\mathbf{n}%
^{+}}( \tau ) $.

For the case (ii), we also have (\ref{yfff}) but
$y_{\mathbf{a}^{0}}(z) $ is a solution to $H(\mathbf{n}%
^{-},B^{-},\tau)$. It follows from (\ref{yby}) that at least two of $\{[ a_{1}^{0}] ,\cdot
\cdot \cdot,[ a_{N}^{0}]\} $ must be $0$. After a
rearrangement of the index, we have might assume
\begin{equation*}
\mathbf{a}^{0}=\{[ a_{1}^{0}] ,\cdot \cdot \cdot,[
a_{N-2}^{0}] ,0,0\}.
\end{equation*}
Then we have $\{[ a_{1}^{0}] ,\cdot \cdot \cdot,[
a_{N-2}^{0}]\}\in Y_{\mathbf{n}^{-}}( \tau) $.
\end{proof}

\begin{remark}
In the following, we always identify a point $\{[ a_{1}^{0}] ,\cdot \cdot \cdot,[
a_{N-2}^{0}]\}\in Y_{\mathbf{n}^{-}}( \tau) $ with the point $\{[ a_{1}^{0}] ,\cdot \cdot \cdot,[
a_{N-2}^{0}] ,0,0\}$ in Sym$^NE_{\tau}$.
\end{remark}

Next, we consider the case $I_{\mathbf{n}}( z;p,A( p),\tau) $
diverges, i.e. $B( p) \rightarrow \infty$ as $p\rightarrow0$.

\begin{proposition}
\label{prop4}Suppose $B( p) \rightarrow \infty
$ as $p\rightarrow0$. Then
\begin{equation*}
\mathbf{a}(p) \rightarrow \mathbf{a}^{0}=\infty(0)
\end{equation*}
where
\begin{equation}\label{innn}
\infty(0) :=\bigg( \overset{n_{0}}{\overbrace{0,\cdot \cdot
\cdot,0}},\overset{n_{1}}{\overbrace{\frac{\omega_{1}}{2},\cdot \cdot \cdot,%
\frac{\omega_{1}}{2}}},\overset{n_{2}}{\overbrace{\frac{\omega_{2}}{2},\cdot
\cdot \cdot,\frac{\omega_{2}}{2}}},\overset{n_{3}}{\overbrace {\frac{%
\omega_{3}}{2},\cdot \cdot \cdot,\frac{\omega_{3}}{2}}},0\bigg) .
\end{equation}
\end{proposition}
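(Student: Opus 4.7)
The plan is to establish $E(z; \mathbf{a}^0, 0) \equiv 0$ as a meromorphic function on $E_\tau$, and then read off $\mathbf{a}^0$ by matching pole divisors in the resulting $\wp$-identity. The key tool is the derivative of the product identity \eqref{72}, together with a growth-rate comparison between $c(p)$ and $A(p)p$.

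First I would show $c(p),A(p)\to\infty$ and $A(p)p/c(p)\to 0$. The divergence of $A(p)$ follows directly from \eqref{74} and Proposition \ref{prop11}, since $B(p)\to\infty$. For $c(p)$, evaluate \eqref{72} at a fixed generic $z$ lying outside a small neighborhood of $0$, the $\omega_k/2$'s, and any $a_j^0$: since $D$ and $E$ remain bounded there as $p\to 0$, one obtains
\[
I_{\mathbf{n}}(z;p,A(p),\tau)=c(p)^2+O(c(p))+O(1).
\]
Comparing with $I_{\mathbf{n}}(z;p,A(p),\tau)=B(p)+O(A(p)p)+O(1)$ and the asymptotic $B(p)\sim (A(p)p-\alpha_0^+)(A(p)p-\alpha_0^-)/p^2$ from the proof of Proposition \ref{prop11}, one deduces $c(p)^2\sim B(p)$. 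Writing $\alpha(p):=A(p)p$, one gets $c(p)\sim\pm\alpha(p)/p$ when $\alpha(p)\to\infty$ and $c(p)\sim\pm C/p$ when $\alpha(p)$ is bounded, so in either situation $A(p)p/c(p)\to 0$ and $A(p)p^3/c(p)\to 0$.

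Next, differentiating both expressions \eqref{5031} and \eqref{72} for $I_{\mathbf{n}}$ yields
\[
2c(p)\,E(z;\mathbf{a}(p),p)+2\,D(z;\mathbf{a}(p),p)\,E(z;\mathbf{a}(p),p)+E'(z;\mathbf{a}(p),p)=\sum_{k=0}^{3}n_k(n_k+1)\wp'(z+\tfrac{\omega_k}{2})+\tfrac{3}{4}\bigl(\wp'(z+p)+\wp'(z-p)\bigr)-A(p)\bigl(\wp(z+p)-\wp(z-p)\bigr).
\]
On a small open set $U\subset E_\tau$ away from the $\omega_k/2$'s and the $a_j^0$'s, using $\wp(z+p)-\wp(z-p)=2p\wp'(z)+O(p^3)$ and dividing by $c(p)$, the bounded terms on the left contribute $o(1)$ while the dominant term on the right becomes $-2(A(p)p/c(p))\wp'(z)+O(A(p)p^3/c(p))$, which tends to $0$ by the previous step. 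Passing to the limit gives $E(z;\mathbf{a}^0,0)=0$ uniformly on $U$, and hence $E(z;\mathbf{a}^0,0)\equiv 0$ as an elliptic function on $E_\tau$.

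Finally, unpacking \eqref{79} at $p=0$ rewrites this vanishing as
\[
\sum_{j=1}^{N}\wp(z-a_j^0)=(n_0+1)\,\wp(z)+\sum_{k=1}^{3}n_k\,\wp(z-\tfrac{\omega_k}{2}).
\]
Matching double-pole divisors on both sides immediately forces $\mathbf{a}^0$ to consist of exactly $n_0+1$ copies of $0$ and $n_k$ copies of $\omega_k/2$ for $k=1,2,3$, with no other points—that is, $\mathbf{a}^0=\infty(0)$. The main obstacle I anticipate is keeping the rate-comparison argument fully rigorous in the borderline regime $A(p)p\to\alpha_0^{\pm}$, where $B(p)\to\infty$ only because $|A(p)p-\alpha_0^{\pm}|/p\to\infty$; there one must carefully verify that the relation $c(p)^2\sim B(p)$ still suffices to push $A(p)p/c(p)$ and $A(p)p^3/c(p)$ to zero.
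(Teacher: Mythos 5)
Your proposal is correct, and it shares the paper's skeleton: the same differentiated identity \eqref{D3} obtained from the two expressions \eqref{5031} and \eqref{72} of $I_{\mathbf{n}}$, and the same endgame, namely the vanishing of $E(z;\mathbf{a}^0,0)$, i.e.\ the identity \eqref{eqqqq}, followed by matching the double poles of the resulting $\wp$-identity. Where you genuinely diverge is in how the $A(p)$-term is killed. The paper splits into the cases $A(p)p\to\beta\in\mathbb{C}$ and $A(p)p\to\infty$; in the second case it divides \eqref{D3} by $A(p)p$ and excludes a finite limit of $c(p)/(A(p)p)$ by the contradiction \eqref{D5}, where the right-hand side has a third-order pole at $z=0$ but the left-hand side does not. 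You instead compare the two expressions of $I_{\mathbf{n}}$ at a fixed generic point to obtain $c(p)^2\sim B(p)$, hence $A(p)p/c(p)\to0$ and $A(p)p^{3}/c(p)\to0$ in every regime, after which a single division of \eqref{D3} by $c(p)$ gives $E\to0$ uniformly on $U$ with no case analysis. Your approach buys a uniform, quantitative passage to the limit; the paper's buys freedom from any asymptotics of $c(p)$ beyond $|c(p)|\to\infty$. The borderline regime you flag is in fact harmless and does not need the (slightly inaccurate) intermediate claim that $c(p)\sim\pm C/p$ when $A(p)p$ is bounded: when $A(p)p\to\alpha_0^{\pm}$ with $B(p)\to\infty$ one only has $|c(p)|\sim\sqrt{|B(p)|}$, which may be $o(1/|p|)$, but all you use is that $A(p)p$ stays bounded while $|c(p)|\to\infty$ (the latter already follows from \eqref{6-1}), so $A(p)p/c(p)\to0$ and $A(p)p^{3}/c(p)\to0$ anyway. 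Two small points to make the write-up airtight: argue along subsequences of $A(p)p$ (bounded versus $\to\infty$) since a subsequence argument suffices for the fixed limit $\mathbf{a}^0$, and note that $D$, $E$, $E'$ are uniformly bounded on $U$ precisely because $U$ avoids $E_{\tau}[2]$, $0$ and the limit points $a_j^{0}$ of $\mathbf{a}(p)$.
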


\begin{proof}
Since $B(p) \rightarrow \infty$, we see from (\ref{6-1}) that $%
I_{\bf n}(z;p,A(p),\tau)$ diverges and $c( \mathbf{a}(p)) \rightarrow
\infty$ as $p\rightarrow0$. By differentiating both expressions \eqref{5051} and \eqref{72} of $I_{\bf n}(z; p, A(p), \tau)$ and considering $p\to 0$, we obtain
\begin{align}
& 2c( \mathbf{a}(p)) E( z;\mathbf{a}(p), p) +2D( z;\mathbf{a}(p), p)E( z;\mathbf{a}(p), p)+E'( z;\mathbf{a}(p), p)  \label{D3} \\
& =\sum_{k=0}^{3}n_{k}( n_{k}+1) \wp^{\prime}( z+\tfrac{\omega_{k}}{2}) +\frac{3}{4}( \wp^{\prime}( z+p)
+\wp^{\prime}( z-p))  \notag \\
&\quad +A(p) [ -2\wp^{\prime}( z) p+O(
p^{3})] .  \notag
\end{align}
Then we divide our discussion into two cases.

{\bf Case 1}. Suppose $A(p) p\rightarrow \beta \in \mathbb{C}$ up to a subsequence. Then the
RHS of (\ref{D3}) is uniformly convergent outside the singularties $E_{\tau }%
[2] $. Since $c( \mathbf{a}(p)) \rightarrow \infty$, we
have
\begin{equation*}
E( z;\mathbf{a}(p), p) \rightarrow0\text{ uniformly outside }E_{\tau }%
[2].
\end{equation*}
So we see from (\ref{79}) that
\begin{equation}\label{eqqqq}
\sum_{k=0}^{3}n_{k}\wp( z-\tfrac{\omega_{k}}{2})
-\sum_{j=1}^{N}\wp( z-a_{j}^{0}) +\wp ( z) =0.
\end{equation}
This implies that there are $( n_{0}+1) $ of $a_{j}^{0}$ equal to
$0$ and $n_{k}$ of $a_{j}^{0}$ equal to $\frac{\omega_{k}}{2}$ for $k\in \{1,2,3\}$, namely ${\bf a}^0=\infty(0)$.

(ii) Suppose $A(p)p\to \infty$. By dividing $A(
p) p$ on both sides of (\ref{D3}), we see that%
\begin{align} \label{D4}
\frac{2c( \mathbf{a}(p)) }{A( p) p} & E( z;\mathbf{a}(p), p) =\frac{-2D( z;\mathbf{a}(p), p)E( z;\mathbf{a}(p), p)-E'( z;\mathbf{a}(p), p)}{A(
p) p} \notag\\
& +\frac{1}{A( p) p}\left[
\begin{array}{c}
\sum_{k=0}^{3}n_{k}( n_{k}+1) \wp^{\prime}( z+\tfrac {%
\omega_{k}}{2}) \\
+\frac{3}{4}( \wp^{\prime}( z+p) +\wp^{\prime}(
z-p) )%
\end{array}
\right]   -2\wp^{\prime}( z) +O( p^{2}) .
\end{align}
Since the RHS of (\ref{D4}) converges uniformly to $-2\wp^{\prime}(
z) $ outside $E_{\tau}[2] $, we have either $\frac{c(
\mathbf{a}(p)) }{A( p) p}\rightarrow \beta \in \mathbb{C}$ or $%
\frac{c( \mathbf{a}(p)) }{A( p) p}$ $\rightarrow \infty$.
Suppose $\frac{c(
\mathbf{a}(p)) }{A( p) p}\rightarrow
\beta \in \mathbb{C}$. Letting $p\to 0$, it follows from (\ref{D4}) and (\ref{79}) that
\begin{equation}
\beta \bigg[\sum_{k=0}^{3}n_{k}\wp( z-\tfrac{\omega_{k}}{2})
-\sum_{j=1}^{N}\wp( z-a_{j}^{0}) +\wp ( z)\bigg]
=-\wp^{\prime}(z) .   \label{D5}
\end{equation}
But the RHS of (\ref{D5}) has a pole of order $3$ at $z=0$ while the LHS does not, a contradiction. Thus, $%
\frac{c( \mathbf{a}(p)) }{A(p) p}$ $\rightarrow \infty$
and then we obtain (\ref{eqqqq}) again, which gives $\mathbf{a}^{0}=\infty(0)$.
This completes the proof.
\end{proof}

\begin{corollary}
\label{u0}Let $( A( p) ,W( p) ) \in \Gamma_{%
\mathbf{n},p}$. Assume that $\sigma_{\mathbf{n},p}( A( p)
,W( p) )\rightarrow [u_{0}]\not =0$ as $
p\rightarrow0$. Then $I_{\mathbf{n}}( z;p,A( p),\tau) $
converges as $p\rightarrow0$.
\end{corollary}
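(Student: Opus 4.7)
The plan is to argue by contradiction. Suppose that $I_{\mathbf{n}}(z;p,A(p),\tau)$ fails to converge as $p\to 0$. By compactness of $\mathrm{Sym}^N E_\tau$, I can pass to a subsequence $p_n\to 0$ along which $\mathbf{a}(p_n)\to \mathbf{a}^0$ in $\mathrm{Sym}^N E_\tau$ and along which $I_{\mathbf{n}}(z;p_n,A(p_n),\tau)$ still diverges. The equivalence \eqref{6-1} then forces $B(p_n)\to\infty$ along this subsequence (possibly after a further extraction), since convergence of $B$ is equivalent to convergence of $I$.

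With $B(p_n)\to\infty$ in hand, Proposition \ref{prop4} applies directly and yields $\mathbf{a}^0=\infty(0)$, the multiset in \eqref{innn} consisting of $n_0+1$ copies of $0$ together with $n_k$ copies of $\frac{\omega_k}{2}$ for $k=1,2,3$. Since the addition map
\[
\{[a_1],\ldots,[a_N]\}\;\longmapsto\;\sum_{i=1}^N[a_i]-\sum_{k=1}^{3}\bigl[\tfrac{n_k\omega_k}{2}\bigr]
\]
is continuous on $\mathrm{Sym}^N E_\tau$, I can pass the limit through $\sigma_{\mathbf{n},p_n}$ to obtain
\[
\lim_{n\to\infty}\sigma_{\mathbf{n},p_n}(A(p_n),W(p_n))
=(n_0+1)\cdot 0+\sum_{k=1}^{3}n_k\bigl[\tfrac{\omega_k}{2}\bigr]-\sum_{k=1}^{3}\bigl[\tfrac{n_k\omega_k}{2}\bigr]=0
\]
in $E_\tau$, which contradicts the hypothesis $\sigma_{\mathbf{n},p}(A(p),W(p))\to [u_0]\neq 0$. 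Hence $I_{\mathbf{n}}(z;p,A(p),\tau)$ must converge.

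The substantive work is entirely done by Proposition \ref{prop4}: once divergence of $B(p)$ is shown to pin the limit multiset to $\infty(0)$, everything else reduces to a routine continuity calculation on $E_\tau$. The only mildly delicate point, and what I view as the main (though minor) obstacle, is the initial subsequence extraction: I must simultaneously secure convergence of $\mathbf{a}(p_n)$ in $\mathrm{Sym}^N E_\tau$ and divergence of $I_{\mathbf{n}}$, and then use the fact that the assumption $\sigma_{\mathbf{n},p}(A(p),W(p))\to[u_0]$ automatically transfers to any subsequence so that the contradiction with $[u_0]\neq 0$ is valid.
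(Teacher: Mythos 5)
Your argument is essentially identical to the paper's own proof: the paper likewise assumes $\mathbf{a}(p)\to\mathbf{a}^{0}$ up to a subsequence, identifies divergence of $I_{\mathbf{n}}(z;p,A(p),\tau)$ with $B(p)\to\infty$ via \eqref{6-1}, and then applies Proposition \ref{prop4} to force $\mathbf{a}^{0}=\infty(0)$, hence $\sigma_{\mathbf{n},p}(A(p),W(p))\to 0$, contradicting $[u_{0}]\neq 0$. The only difference is that you make explicit the subsequence extraction and the continuity of the addition map on $\mathrm{Sym}^{N}E_{\tau}$, which the paper leaves implicit.
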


\begin{proof}
Let $\mathbf{a}( p)=i_{{\bf n},p}(A( p) ,W( p)) $ be the corresponding point in $Y_{\mathbf{n}%
,p}( \tau) $ and we may assume $\mathbf{a}( p)\rightarrow
\mathbf{a}^{0}$. Suppose $I_{\mathbf{n}}( z;p,A( p),\tau) $ diverges as $p\rightarrow0$. Then Proposition \ref{prop4} says
that $\mathbf{a}^{0}=\infty( 0)$, which implies $[u_{0}]=0$,
a contradiction.
\end{proof}

Now we are
ready to prove Theorem \ref{cor7}.

\begin{proof}[Proof of Theorem \protect\ref{cor7}]
We prove the case for $p\rightarrow0$ and the other three cases are similar.
Let $p\rightarrow0$. By Propositions \ref{prop3} and \ref{prop4}, for any $%
\mathbf{a}( p) $ $\in \overline{Y_{\mathbf{n},p}( \tau
) }$, $\mathbf{a}( p) \rightarrow \mathbf{a}^{0}$ $\in
\overline{Y_{\mathbf{n}^{+}}( \tau ) }\cup \overline{Y_{\mathbf{n}%
^{-}}( \tau ) }$ as $p\rightarrow0$. Conversely, we want to show
that any point $\mathbf{a}\in \overline{Y_{\mathbf{n}^{+}}( \tau
) }\cup \overline {Y_{\mathbf{n}^{-}}( \tau ) }$ could be a
limit point of some $\mathbf{a}( p)\in \overline{Y_{\mathbf{n}%
,p}( \tau ) }$ as $p\rightarrow0$.

First we note that $\infty(0)$ is the limit of $\infty_\pm(p)$ as $%
p\rightarrow0$.

Given any $\mathbf{a}=\{[a_1],\cdots,[a_N]\}\in Y_{{\bf n}^+}(\tau)$ and let $i_{{\bf n}^+}(B,W)=%
\mathbf{a}$ for some $B$. For $p$ close to $0$, we set $A(p):=-(\frac{1}{4}%
+n_0)p^{-1}+\alpha_1^{+} p$, where $\alpha_1^{+}$ is given by (\ref{qe}): \[
\alpha_1^{+}:=-\frac{B+\sum_{k=1}^3n_k(n_k+1)e_k}{2n_0+1},\] and $B(p)$ by
(\ref{101}). Let $(A(p),\pm W(p))\in\Gamma_{{\bf n},p}$ and $\pm\mathbf{a}%
(p)=i_{{\bf n},p}(A(p),\pm W(p)).$ By Propositions \ref{prop11}-\ref{prop3}, $I_{\bf n}(z;p,A(p),\tau)$ converges
to $I_{{\bf n}^+}(z;B,\tau)$ and so $\{\pm\mathbf{a}(p)\}$ converges to $\{\pm\mathbf{a}\}$. By a
similar argument, we could prove that any point of $Y_{{\bf n}^-}(\tau)$ can be
approximated by $Y_{{\bf n},p}(\tau)$ as $p\to 0$. This completes the proof.
\end{proof}

\section{The degree of the addition map}

\label{degreeof}

In previous sections, we have defined the addition map $\sigma_{\mathbf{n,}p}(
\cdot|\tau ) $ and $\sigma_{\mathbf{n}}( \cdot|\tau ) $
from $\overline{\Gamma_{\mathbf{n},p}( \tau )} $ and $\overline {%
\Gamma_{\mathbf{n}}( \tau )} $ onto $E_{\tau}$ respectively.
Since $\overline{\Gamma_{\mathbf{n},p}( \tau )} $ and $\overline{%
\Gamma_{\mathbf{n}}( \tau )} $ are irreducible algebraic curves,
it is an elementary fact that the degrees of $\sigma _{\mathbf{n,}p}(
\cdot|\tau ) $ and $\sigma_{\mathbf{n}}( \cdot|\tau) $ are
well-defined. The purpose of this section is to prove the following result.

\begin{theorem}
\label{degreeformula}Let $\mathbf{n}=(n_{0},n_{1},n_{2},n_{3})$ with $%
n_{k}\in \mathbb{Z}_{\geq0}$ and $p\in E_{\tau}\backslash E_{\tau}[2]$. Then
\begin{equation}
\deg \sigma_{\mathbf{n}}=\sum_{k=0}^{3}\frac{n_{k}(n_{k}+1)}{2},
\label{degh}
\end{equation}
and%
\begin{equation}
\deg \sigma_{\mathbf{n,}p}=\sum_{k=0}^{3}n_{k}(n_{k}+1)+1.   \label{degp}
\end{equation}
\end{theorem}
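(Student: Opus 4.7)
The plan is to first establish Theorem~\ref{thm-degree-addition} (the splitting relation $\deg\sigma_{\mathbf{n},p}=\deg\sigma_{\mathbf{n}_k^+}+\deg\sigma_{\mathbf{n}_k^-}$) via a degeneration argument based on Section~\ref{limiting}, and then deduce both formulas of Theorem~\ref{degreeformula} by combining that splitting with the Lam\'e case $\deg\sigma_{(n,0,0,0)}=n(n+1)/2$ of \cite{LW2}, via induction on $N:=|\mathbf{n}|=n_0+n_1+n_2+n_3$.

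For Theorem~\ref{thm-degree-addition} I fix $k$; by symmetry I may take $k=0$ and let $p\to 0$. Theorem~\ref{cor7} says $\overline{Y_{\mathbf{n},p}(\tau)}$ degenerates to $\overline{Y_{\mathbf{n}_0^+}(\tau)}\cup\overline{Y_{\mathbf{n}_0^-}(\tau)}$. Pick a generic $[u_0]\in E_\tau\setminus\{0\}$, chosen so that all three addition maps are \'etale over $[u_0]$. For each $\mathbf{a}(p)\in\sigma_{\mathbf{n},p}^{-1}([u_0])$, take a subsequential limit $\mathbf{a}^0$. By Corollary~\ref{u0} the hypothesis $[u_0]\neq 0$ rules out $\mathbf{a}^0=\infty(0)$, so Propositions~\ref{prop3}--\ref{prop4} force $\mathbf{a}^0\in Y_{\mathbf{n}_0^+}(\tau)\cup Y_{\mathbf{n}_0^-}(\tau)$ and hence $\mathbf{a}^0\in\sigma_{\mathbf{n}_0^+}^{-1}([u_0])\cup\sigma_{\mathbf{n}_0^-}^{-1}([u_0])$. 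The reverse inclusion---that every point of these two fibers is attained as such a limit---is built into the surjectivity half of Theorem~\ref{cor7}. Fiber counts then match exactly, giving the splitting relation.

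With Theorem~\ref{thm-degree-addition} in hand, set $f(\mathbf{n}):=\deg\sigma_\mathbf{n}$ and $\tilde f(\mathbf{n}):=\sum_k n_k(n_k+1)/2$. A direct calculation yields $\tilde f(\mathbf{n}_k^+)+\tilde f(\mathbf{n}_k^-)=2\tilde f(\mathbf{n})+1$, which is crucially independent of $k$. I induct on $N$; the base $N=0$ is trivial, and the slice $\mathbf{n}=(N,0,0,0)$ is furnished by the Lam\'e result of \cite{LW2}. For the inductive step, apply Theorem~\ref{thm-degree-addition} to any $\mathbf{m}$ with $|\mathbf{m}|=N-1$ and to two distinct indices $k,k'$; the inductive hypothesis supplies $f(\mathbf{m}_j^-)=\tilde f(\mathbf{m}_j^-)$ for $j\in\{k,k'\}$ (in the case $m_j=0$ one interprets $\mathbf{m}_j^-$ and $\mathbf{m}$ as defining the same ODE, an identification consistent with $\tilde f$). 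Subtracting the two resulting expressions for $\deg\sigma_{\mathbf{m},p}$ yields $(f-\tilde f)(\mathbf{m}_k^+)=(f-\tilde f)(\mathbf{m}_{k'}^+)$, so $f-\tilde f$ is invariant under every ``unit-transfer'' move $n_k\to n_k-1$, $n_{k'}\to n_{k'}+1$ (valid whenever $n_k\geq 1$). Any two elements of the level set $\{|\mathbf{n}|=N\}$ are connected by a chain of such moves, so the Lam\'e anchor forces $f-\tilde f\equiv 0$ on level $N$. This proves the formula for $\deg\sigma_\mathbf{n}$, and then $\deg\sigma_{\mathbf{n},p}=2\tilde f(\mathbf{n})+1=\sum_k n_k(n_k+1)+1$ is immediate from Theorem~\ref{thm-degree-addition}.

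The hard part is the first step. Proving Theorem~\ref{thm-degree-addition} as an equality of morphism \emph{degrees} (and not merely of generic set-theoretic fiber cardinalities) requires verifying that multiplicities are preserved in the $p\to\omega_k/2$ degeneration, and that no fiber point of $\sigma_{\mathbf{n},p}$ over a generic $[u_0]\neq 0$ escapes to $\infty_\pm(p)$; both points rest on the precise control of $I_{\mathbf{n}}(z;p,A(p),\tau)$ supplied by Corollary~\ref{u0} together with the closure analysis of Section~\ref{closure}.
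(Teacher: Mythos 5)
Your second step is fine: deducing both \eqref{degh} and \eqref{degp} from the splitting relation together with the Lam\'e anchor $\deg\sigma_{(n,0,0,0)}=n(n+1)/2$ of \cite{LW2} (with the convention that $n_k=-1$ and $n_k=0$ give the same equation) is exactly what the paper does, and your reformulation via invariance of $f-\tilde f$ under unit transfers is a clean packaging of the paper's induction (\ref{1115}), (\ref{1117}). The problem is the first step, which is where essentially all of the paper's work in Section \ref{degreeof} lies.

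The degeneration sketch does not prove $\deg\sigma_{\mathbf{n},p}=\deg\sigma_{\mathbf{n}_k^+}+\deg\sigma_{\mathbf{n}_k^-}$. Knowing that every point of $\sigma_{\mathbf{n},p}^{-1}([u_0])$ subconverges into $\sigma_{\mathbf{n}_k^+}^{-1}([u_0])\cup\sigma_{\mathbf{n}_k^-}^{-1}([u_0])$ yields no inequality between $m=\deg\sigma_{\mathbf{n},p}$ and $m^++m^-$ unless you additionally (i) rule out collisions, i.e.\ two distinct points of the fiber of $\sigma_{\mathbf{n},p}$ over $[u_0]$ converging to the same limit, and (ii) show that every point of \emph{both} limit fibers over the \emph{same} $[u_0]$ is attained, with the correct multiplicity; you also need genericity of $[u_0]$ uniformly in $p$, since the branch locus of $\sigma_{\mathbf{n},p}$ moves with $p$. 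Corollary \ref{u0} only guarantees that along such a sequence the potential converges (so no escape to $\infty_\pm(p)$), and says nothing about injectivity or multiplicities; and the surjectivity half of Theorem \ref{cor7} produces approximating points whose $\sigma_{\mathbf{n},p}$-values merely converge to the prescribed value, not points lying in the fiber over the fixed $[u_0]$, so it does not give the reverse count either. This is precisely why the paper does not argue by fiber counting: it writes $\wp(\sigma_{\mathbf{n},p}(A,W)|\tau)$ as a rational function $P_1(A,p;\tau)/P_2(A,p;\tau)$ as in (\ref{pu}), proves $\deg\sigma_{\mathbf{n},p}=\deg_AP_2=m$ and its independence of $(p,\tau)$ (Lemmas \ref{lem56}, \ref{lem58}, \ref{main thm4}, \ref{cor ind}), and then in the key Lemma \ref{lemma5} carries out a delicate asymptotic classification of the zeros $A_i^0(p)$ and poles $A_i^\infty(p)$ of (\ref{pu5}) as $p\to 0$, showing that exactly two poles diverge in the manner (D-iii) while the remaining ones split as $(m_1-1)+(m_2-1)$ according to the $\pm$ asymptotics (\ref{app}); only after passing to the limit in (\ref{pu5}) and comparing the resulting expression (\ref{bbbb}) with (\ref{pu5-5}) and Takemura's relations (\ref{113})--(\ref{114}) does one obtain $m_1=m^+$ and $m_2=m^-$. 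That multiplicity bookkeeping is exactly the content you deferred to ``Corollary \ref{u0} and the closure analysis,'' which do not supply it; as written, your proof of Theorem \ref{thm-degree-addition}, and hence of Theorem \ref{degreeformula}, has a genuine gap.
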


To compute the degree of the map $\sigma_{\mathbf{n},p}$, we
consider $\wp(\sigma_{\mathbf{n,}p}(A,W)|\tau)$. Since $\sigma_{\mathbf{n,}%
p}(A,-W)=-\sigma_{\mathbf{n,}p}(A,W)$, $%
\wp(\sigma_{\mathbf{n,}p}(A,W)|\tau)$ depends on $A$ only and is a
meromorphic function of $A\in\mathbb{C}$. Together with the facts that (i) $%
\wp(\sigma_{\mathbf{n,}p}(\cdot,W)|\tau)$ has finitely many poles $A$'s and
(ii) $\sigma_{\mathbf{n,}p}(A,W)\rightarrow \pm [p]$ by Corollary \ref{coro4-4} and so
\begin{equation}
\wp(\sigma_{\mathbf{n,}p}(A,W)|\tau)\rightarrow \wp(p|\tau)\text{\ as }%
A\rightarrow \infty \text{ for any }\tau \text{ and }p\not \in E_{\tau}[2],
\label{p-p}
\end{equation}
we conclude that there are \emph{coprime polynomials} $P_{j}(A,p;\tau )\in
\mathbb{C}[A]$, $j=1,2$, such that%
\begin{equation}
\wp(\sigma_{\mathbf{n,}p}(A,W)|\tau)=\frac{P_{1}(A,p;\tau)}{P_{2}(A,p;\tau)}%
.   \label{pu}
\end{equation}
Indeed, a more delicate argument shows $P_{j}(A,p;\tau)\in \mathbb{Q[}%
e_{k}(\tau),\wp(p|\tau),\wp^{\prime}(p|\tau)]$ $[A]$; see e.g. \cite[%
Proposition 3.1]{Takemura}. Here $e_k(\tau):=\wp(\frac{\omega_k}{2}|\tau)$, $k=1,2,3$. Write%
\begin{equation*}
P_{1}(A,p;\tau)=\sum_{k=0}^{m_{1}}a_{k}(p;\tau)A^{k}\text{ \ and }%
P_{2}(A,p;\tau)=\sum_{k=0}^{m_{2}}b_{k}(p;\tau)A^{k},
\end{equation*}
where $a_{k}(p;\tau),b_{k}(p;\tau)\in \mathbb{Q[}e_{k}(\tau),\wp(p|\tau
),\wp^{\prime}(p|\tau)]$ with $a_{m_{1}}(p;\tau)\not \equiv 0$ and $%
b_{m_{2}}(p;\tau)\not \equiv 0$.

\begin{lemma}
\label{lem56} Let $\tau \in \mathbb{H}$ and $p\not \in E_{\tau}[2]$. Then
under the above notations, the followings hold.

\begin{itemize}
\item[(1)] $m_{1}=m_{2}=:m$ and%
\begin{equation}
\frac{a_{m}(p;\tau)}{b_{m}(p;\tau)}=\wp(p|\tau).   \label{a-b-k}
\end{equation}

\item[(2)] If $b_{m}(p;\tau)\not =0$, then%
\begin{equation}
\deg_{A}P_{1}(A,p;\tau)\leq \deg_{A}P_{2}(A,p;\tau)=\deg \sigma_{\mathbf{n,}%
p}=m.   \label{p1p2}
\end{equation}
Furthermore,%
\begin{equation}
\deg_{A}P_{1}(A,p;\tau)=\deg_{A}P_{2}(A,p;\tau)\text{ }\Leftrightarrow \text{
}\wp(p|\tau)\not =0.   \label{p1p2-1}
\end{equation}
\end{itemize}
\end{lemma}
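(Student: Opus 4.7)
The plan is to combine the asymptotic relation $P_1/P_2 \to \wp(p|\tau)$ as $A \to \infty$ (which follows from \eqref{p-p}) with the multiplicativity of degree for the finite morphism $\wp \circ \sigma_{\mathbf{n,}p} : \overline{\Gamma_{\mathbf{n,}p}(\tau)} \to \mathbb{CP}^1$.

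For Part (1), I argue by genericity. The set $U := \{p \in E_\tau \setminus E_\tau[2] : \wp(p|\tau) \neq 0,\ a_{m_1}(p;\tau) \neq 0,\ b_{m_2}(p;\tau) \neq 0\}$ is the complement of a finite subset of $E_\tau$, hence Zariski dense. For $p \in U$, taking $A \to \infty$ in \eqref{pu} yields a finite nonzero limit, which rules out both $m_1 > m_2$ (the ratio would tend to $\infty$) and $m_1 < m_2$ (it would tend to $0$). Hence $m_1 = m_2 =: m$ and the leading-coefficient ratio satisfies $a_m(p;\tau)/b_m(p;\tau) = \wp(p|\tau)$ for every $p \in U$. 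Since $a_m$, $b_m$ and $\wp(p|\tau)$ all lie in $\mathbb{Q}[e_k(\tau), \wp(p|\tau), \wp'(p|\tau)]$, the equation $a_m - \wp(p|\tau) \cdot b_m = 0$ is a polynomial identity in this ring and so holds at every $p \not\in E_\tau[2]$, proving \eqref{a-b-k}.

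For Part (2), assume $b_m(p;\tau) \neq 0$, so that $\deg_A P_2 = m$; the bound $\deg_A P_1 \leq m$ is automatic because $P_1$ has no $A^k$-term with $k > m_1 = m$. To identify $m$ with $\deg \sigma_{\mathbf{n,}p}$, I use the finite morphism $\wp \circ \sigma_{\mathbf{n,}p} : \overline{\Gamma_{\mathbf{n,}p}(\tau)} \to \mathbb{CP}^1$ from Corollary \ref{coro4-4}. Multiplicativity of degrees and $\deg \wp = 2$ give $\deg(\wp \circ \sigma_{\mathbf{n,}p}) = 2 \deg \sigma_{\mathbf{n,}p}$. On the other hand, \eqref{pu} factors this composition as $R \circ \pi$, where $\pi : \overline{\Gamma_{\mathbf{n,}p}(\tau)} \to \mathbb{CP}^1$, $(A,W) \mapsto A$, is the hyperelliptic projection of degree $2$ (using the structure at $\infty_\pm$ from \eqref{eqgamma}), and $R(A) := P_1(A,p;\tau)/P_2(A,p;\tau)$ is a rational map of degree $\max(\deg_A P_1, \deg_A P_2) = m$ (using coprimeness of $P_1, P_2$). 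Equating degrees gives $2\deg \sigma_{\mathbf{n,}p} = 2m$, proving \eqref{p1p2}. For the equivalence \eqref{p1p2-1}: if $\wp(p|\tau) \neq 0$, the finite nonzero limit $P_1/P_2 \to \wp(p|\tau)$ together with $\deg_A P_2 = m$ forces $\deg_A P_1 = m$; conversely, if $\wp(p|\tau) = 0$, then Part (1) gives $a_m(p;\tau)/b_m(p;\tau) = 0$, and since $b_m(p;\tau) \neq 0$ by hypothesis we obtain $a_m(p;\tau) = 0$ and hence $\deg_A P_1 < m$.

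The main obstacle is a clean justification of the composition-of-degrees identity. One must verify that $\pi$ has degree exactly $2$ on the compactification $\overline{\Gamma_{\mathbf{n,}p}(\tau)}$, which relies on the smoothness at $\infty_\pm$ in \eqref{eqgamma}, and that no degree collapse occurs in the factorization $\wp \circ \sigma_{\mathbf{n,}p} = R \circ \pi$; the coprimeness of $P_1$ and $P_2$ in $\mathbb{C}[A]$ is precisely what ensures that $R : \mathbb{CP}^1 \to \mathbb{CP}^1$ genuinely has degree $m$.
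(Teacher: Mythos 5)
Your Part (2) is correct but follows a genuinely different route from the paper. The paper never invokes multiplicativity of degree: it fixes a generic $\sigma_0\in E_\tau$, shows that the fiber $\sigma_{\mathbf{n,}p}^{-1}(\sigma_0)$ yields $\tilde m:=\deg\sigma_{\mathbf{n,}p}$ \emph{distinct} roots of $P_1-\wp(\sigma_0|\tau)P_2$ (using Theorem \ref{thm3} to exclude $W_i=0$ and the antisymmetry $\sigma_{\mathbf{n,}p}(A,-W)=-\sigma_{\mathbf{n,}p}(A,W)$ to exclude repeated $A_i$'s), and then rules out any extra root, so $\deg_A\bigl(P_1-\wp(\sigma_0|\tau)P_2\bigr)=\tilde m$ for generic $\sigma_0$ and hence $m=\tilde m$. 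Your factorization $\wp\circ\sigma_{\mathbf{n,}p}=R\circ\pi$, with $\deg\pi=2$ from \eqref{eqgamma} and $\deg R=\max(\deg_AP_1,\deg_AP_2)=m$, reaches the same conclusion faster, at the cost of invoking standard facts about finite morphisms of (possibly singular) irreducible projective curves: multiplicativity of degree and the agreement of the fiber-count definition of $\deg\sigma_{\mathbf{n,}p}$ with the function-field degree. Both arguments rest, in the same way, on the coprimality of the specialized $P_1(\cdot,p;\tau),P_2(\cdot,p;\tau)$ in $\mathbb{C}[A]$ asserted before the lemma (you rightly flag this as the crux for $\deg R=m$) and on the irreducibility of $\overline{\Gamma_{\mathbf{n,}p}}$; the derivation of \eqref{p1p2-1} is the same in both.

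In Part (1) there is a small gap. You fix the $\tau$ of the statement and assert that $U=\{p:\wp(p|\tau)\neq0,\ a_{m_1}(p;\tau)\neq0,\ b_{m_2}(p;\tau)\neq0\}$ is cofinite in $E_\tau$. But the setup only guarantees $a_{m_1}\not\equiv0$ and $b_{m_2}\not\equiv0$ as functions of the pair $(p,\tau)$; for a particular $\tau$ these coefficients may vanish identically in $p$ (the paper explicitly allows this in Case 2 of Lemma \ref{main thm4}), in which case $U$ is empty and neither $m_1=m_2$ nor \eqref{a-b-k} can be extracted from that $\tau$ alone. The repair is the paper's: choose $(\tau,p)$ jointly generic so that $\wp(p|\tau)\,a_{m_1}(p;\tau)\,b_{m_2}(p;\tau)\neq0$, get $m_1=m_2$ and \eqref{a-b-k} there, and propagate \eqref{a-b-k} to all $(\tau,p)$ using that both sides are meromorphic in the two variables, not just in $p$. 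Relatedly, your closing ``polynomial identity in the ring'' step is stronger than what vanishing on a cofinite set of $p$ at one fixed $\tau$ delivers; the two-variable continuation is what actually gives the statement for every $\tau$.
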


\begin{proof}
(1) Take any $\tau$ and $p\not \in E_{\tau}[2]$ such that $\wp(p|\tau)\not =0
$, $a_{m_{1}}(p;\tau)\not =0$ and $b_{m_{2}}(p;\tau)\not =0$. Then it
follows from (\ref{p-p}) and (\ref{pu}) that $m_{1}=m_{2}$ (denote it by $m$%
) and (\ref{a-b-k}) holds. Since both sides of (\ref{a-b-k}) are meromorphic
in $(\tau,p)$, we conclude that (\ref{a-b-k}) holds for all $\tau$ and $%
p\not \in E_{\tau}[2]$.

(2) Fix any $\tau$ and $p\not \in E_{\tau}[2]$ such that $b_{m}(p;\tau )\not
=0$. Then $\deg_{A}P_{2}=m\geq \deg_{A}P_{1}$ and (\ref{p1p2-1}) follows
from (\ref{a-b-k}). It suffices to prove $\deg \sigma_{\mathbf{n,}p}=m$.
Denote $\tilde{m}:=\deg \sigma_{\mathbf{n,}p}$. Take any $\sigma_{0}\not \in
E_{\tau }[2]\cup \{ \pm \lbrack p]\}$ such that
\begin{equation}
\sigma_{\mathbf{n,}p}^{-1}\left( \sigma_{0}\right) =\{ \left(
A_{1},W_{1}\right) ,\cdot \cdot \cdot,\left( A_{\tilde{m}},W_{\tilde{m}%
}\right) \}   \label{po1p1}
\end{equation}
consists of $\tilde{m}$ distinct elements in $\Gamma_{\mathbf{n},p}\left(
\tau \right) $ and the polynomial%
\begin{equation}
P_{1}(\cdot,p;\tau)-\wp(\sigma_{0}|\tau)P_{2}(\cdot,p;\tau)\text{ has only
simple zeros.}   \label{po1}
\end{equation}

If $W_{i}=0$ for some $i$, then it follows from Theorem \ref{thm3} that $%
\sigma_{0}=\sigma_{\mathbf{n,}p}(A_{i},W_{i})\in E_{\tau}[2]$, a
contradiction. Hence $W_{i}\not =0$ for all $i$. If $A_{i}=A_{j}$ for some $%
i\not =j$, then $W_{i}$ $=$ $-W_{j}$. By using%
\begin{equation*}
\sigma_{0}=\sigma_{\mathbf{n,}p}(A_{j},W_{j})=\sigma_{\mathbf{n,}%
p}(A_{i},-W_{i})=-\sigma_{\mathbf{n,}p}( A_{i},W_{i})
=-\sigma_{0}\text{ in}\; E_{\tau},
\end{equation*}
we have $\sigma_{0}\in E_{\tau}[2]$, a contradiction again. Thus these $A_{i}
$'s are $\tilde{m}$ distinct roots of (\ref{po1}), namely
\begin{equation}
\deg_{A}(P_{1}(A,p;\tau)-\wp(\sigma_{0}|\tau)P_{2}(A,p;\tau))\geq \tilde{m}.
\label{qqq}
\end{equation}

Assume by contradiction that $\deg_{A}(P_{1}-\wp(\sigma_{0}|\tau)P_{2})>%
\tilde{m}$. By (\ref{po1}), the polynomial $P_{1}-\wp(\sigma_{0}|\tau)P_{2}$
has another zero $A_{\tilde{m}+1}\not \in \{A_{1},\cdot \cdot \cdot,A_{%
\tilde {m}}\}$. Then the points $\left( A_{\tilde{m}+1},\pm W_{\tilde{m}%
+1}\right) \in \Gamma_{\mathbf{n},p}$ satisfy%
\begin{equation*}
\wp(\sigma_{\mathbf{n,}p}(A_{\tilde{m}+1},\pm W_{\tilde{m}+1})|\tau )=\frac{%
P_{1}(A_{\tilde{m}+1},p;\tau)}{P_{2}(A_{\tilde{m}+1},p;\tau)}%
=\wp(\sigma_{0}|\tau),
\end{equation*}
i.e. either $\sigma_{\mathbf{n,}p}( A_{\tilde{m}+1},W_{\tilde{m}%
+1}) =\sigma_{0}$ or $\sigma_{\mathbf{n,}p}( A_{\tilde{m}+1},-W_{%
\tilde{m}+1}) =\sigma_{0}$, which is a contradiction with (\ref{po1p1}%
) and $A_{\tilde{m}+1}\not \in \{A_{1},\cdot \cdot \cdot ,A_{\tilde{m}}\}$.
This proves%
\begin{equation*}
\deg_{A}(P_{1}(A,p;\tau)-\wp(\sigma_{0}|\tau)P_{2}(A,p;\tau))=\tilde{m}
\end{equation*}
hold for almost $\sigma_{0}\not \in E_{\tau}[ 2] \cup \{ \pm
\lbrack p]\}$, which implies%
\begin{equation*}
m=\max \{ \deg_{A}P_{1},\deg_{A}P_{2}\}=\tilde{m}=\deg \sigma_{\mathbf{n,}%
p}.
\end{equation*}
The proof is complete.
\end{proof}

\begin{lemma}
\label{lem58} Let $\tau_{0}\in \mathbb{H}$ and $p_{0}\not \in E_{\tau_{0}}[2]
$. If at least one of $\{a_{m}(p_{0};\tau_{0}),$ $\cdot \cdot \cdot,$ $%
a_{0}(p_{0};\tau_{0}),$ $b_{m}(p_{0};\tau_{0}),$ $\cdot \cdot \cdot,$ $%
b_{0}(p_{0};\tau_{0})\}$ is not zero, then $b_{m}(p_{0};\tau_{0})\not =0$.
\end{lemma}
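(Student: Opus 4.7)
The plan is to argue by contradiction: assume $b_m(p_0,\tau_0)=0$ while at least one of the $a_k(p_0,\tau_0)$ or $b_k(p_0,\tau_0)$ is nonzero. The first step is to use Lemma \ref{lem56}(1), which asserts the identity $a_m=\wp(p|\tau)\,b_m$ in the coefficient ring $\mathbb{Q}[e_k(\tau),\wp(p|\tau),\wp'(p|\tau)]$. Since $p_0\notin E_{\tau_0}[2]$ includes $p_0\notin\Lambda_{\tau_0}$, the value $\wp(p_0|\tau_0)$ is finite, and so $a_m(p_0,\tau_0)=\wp(p_0|\tau_0)\,b_m(p_0,\tau_0)=0$ as well. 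Hence the specialized polynomials $P_i^0(A):=P_i(A,p_0;\tau_0)$ both satisfy $\deg_A P_i^0<m$ for $i=1,2$, and by the contradiction hypothesis they are not both identically zero.

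Next I would promote the identity $\wp(\sigma_{\mathbf{n},p}(A,W)|\tau)=P_1(A,p,\tau)/P_2(A,p,\tau)$ to the limit $(p_0,\tau_0)$. Since $b_m\not\equiv 0$ (because $\deg_A P_2=m$ holds generically by Lemma \ref{lem56}(2)), I can choose an analytic path $(p_t,\tau_t)\to(p_0,\tau_0)$ along which $b_m(p_t,\tau_t)\neq 0$. For each $A$ away from the zeros of $P_2^0$, continuity in $(p,\tau)$ of the coefficients of $P_i$ and of the addition map $\sigma_{\mathbf{n},p}$ (cf.\ Section \ref{closure} and Corollary \ref{coro4-4}) yields
\[
\wp\bigl(\sigma_{\mathbf{n},p_0}(A,W)\bigm|\tau_0\bigr)=\lim_{t\to 0}\frac{P_1(A,p_t,\tau_t)}{P_2(A,p_t,\tau_t)}=\frac{P_1^0(A)}{P_2^0(A)}
\]
as rational functions on $\Gamma_{\mathbf{n},p_0}(\tau_0)$. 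Reducing to lowest terms, the right-hand side has degree at most $\max(\deg P_1^0,\deg P_2^0)<m$. Repeating verbatim the fiber-counting argument from the proof of Lemma \ref{lem56}(2) at $(p_0,\tau_0)$, I then obtain $\deg\sigma_{\mathbf{n},p_0}(\cdot|\tau_0)<m$. (The degenerate cases $P_1^0\equiv 0$ or $P_2^0\equiv 0$ are handled analogously, since each forces $\sigma_{\mathbf{n},p_0}$ to be constant, hence of degree $0<m$.)

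Finally I would derive a contradiction by invoking the deformation invariance of the degree of the finite morphism $\sigma_{\mathbf{n},p}(\cdot|\tau)$. The total family $\{\overline{\Gamma_{\mathbf{n},p}(\tau)}\}$ is a flat proper family of complete hyperelliptic covers over the connected base of admissible $(p,\tau)$ (because $Q_{\mathbf{n},p}(A;\tau)$ is monic of constant $A$-degree $2g+2$), and by Theorem \ref{Hyper} the addition map extends to a morphism of such families. Since the generic value of the degree is $m$, constancy forces $\deg\sigma_{\mathbf{n},p_0}(\cdot|\tau_0)=m$, contradicting the strict inequality of the previous paragraph. The main obstacle is precisely this last invocation: while constancy of the degree in a flat family of finite covers is a standard algebraic-geometric principle, a fully rigorous argument may require either passing to normalizations (to handle possible singularity of $\Gamma_{\mathbf{n},p_0}(\tau_0)$) or a direct Zariski-open/closed argument showing that $\#\sigma_{\mathbf{n},p}^{-1}(z_0)$ is constant for a sufficiently generic $z_0\in E_\tau$.
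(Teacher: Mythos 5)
Your opening step is the same as the paper's: from Lemma \ref{lem56}(1) and $b_m(p_0;\tau_0)=0$ you get $a_m(p_0;\tau_0)=\wp(p_0|\tau_0)b_m(p_0;\tau_0)=0$, so the specialized polynomials drop degree while not both vanishing, and comparison with nearby parameters $(p_\ell,\tau_\ell)$ where $b_m\neq 0$ is the right move. The genuine gap is in how you close the argument. You conclude $\deg\sigma_{\mathbf{n},p_0}(\cdot|\tau_0)<m$ and then want a contradiction from ``deformation invariance of the degree in a flat proper family.'' But that invariance is precisely the statement of Lemma \ref{main thm4}, whose proof in the paper \emph{uses} Lemma \ref{lem58}; you give no independent proof of it, and you yourself flag it as the main obstacle. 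To make it rigorous you would need, at a minimum: that $\deg_A Q_{\mathbf{n},p}(A;\tau)$ is the same for all $(p,\tau)$ (not stated in the paper), that the addition maps assemble into a morphism of families over the parameter space (joint holomorphy in $(A,W,p,\tau)$, including where fibers acquire singularities from multiple roots of $Q_{\mathbf{n},p}$), and a constancy-of-degree argument robust to singular fibers (normalization, or an open--closed argument). None of this is supplied, so as written the proof is incomplete at its decisive step.

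The paper avoids any such global principle. After fixing a generic $\sigma_0\notin E_{\tau_0}[2]\cup\{\pm[p_0]\}$ with $P_1(\cdot,p_0;\tau_0)-\wp(\sigma_0|\tau_0)P_2(\cdot,p_0;\tau_0)$ nonzero of degree $\le m-1$, it looks at the $m$ points of $\sigma_{\mathbf{n},p_\ell}^{-1}(\sigma_0)$ for $(p_\ell,\tau_\ell)\to(p_0,\tau_0)$: their $A$-coordinates are roots of $P_1-\wp(\sigma_0)P_2$, and since the limit polynomial has degree $\le m-1$, some root $A_{\ell,i}$ must escape to infinity. Then Proposition \ref{prop5-3} (the closure analysis behind Theorem \ref{Hyper} and Corollary \ref{coro4-4}) forces the corresponding divisor to converge to $\infty_{\pm}(p_0)$, hence $\sigma_0=\pm p_0$, contradicting the choice of $\sigma_0$. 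This ``escape to infinity drives the image to $\pm p$'' mechanism is the key idea missing from your proposal; if you replace your final step by it (keeping your first paragraph), you recover the paper's proof without ever invoking degree invariance.
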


\begin{proof}
Assume by contradiction that $b_{m}(p_{0};\tau_{0})=0$. By our assumption
and Lemma \ref{lem56}-(1), we can take $\sigma_{0}\not \in E_{\tau_{0}}[
2] \cup \{ \pm \lbrack p_{0}]\}$ such that
\begin{equation*}
P(A,p_{0};\tau_{0}):=P_{1}(A,p_{0};\tau_{0})-\wp(\sigma_{0}|%
\tau_{0})P_{2}(A,p_{0};\tau_{0})
\end{equation*}
is a nonzero polynomial with $\deg P(A,p_{0};\tau_{0})\leq m-1$. Let $%
\tau_{\ell},p_{\ell}\not \in E_{\tau_{\ell}}[2]$ such that $( p_{\ell
},\tau_{\ell}) $ $\rightarrow(p_{0},\tau_{0})$ as $\ell\to\infty$, $b_{m}(p_{\ell};\tau_{%
\ell})$ $\not =0$ and $\sigma_{0}\not \in E_{\tau_{\ell}}[ 2]
\cup \{ \pm \lbrack p_{\ell}]\}$ for all $\ell$. Then Lemma \ref{lem56}-(2)
gives $\deg \sigma_{\mathbf{n,}p_{\ell}}(\cdot|\tau_{\ell})=m$, so%
\begin{equation*}
\sigma_{\mathbf{n},p_{\ell}}^{-1}\left( \sigma_{0}|\tau_{\ell}\right) =\left
\{ \left( A_{\ell,1},W_{\ell,1}\right) ,\cdot \cdot \cdot,\left(
A_{\ell,m},W_{\ell,m}\right) \right \}
\end{equation*}
and $A_{\ell,i}$, $i=1,\cdot \cdot \cdot,m,$ are all the zeros of
\begin{equation*}
P(A,p_{\ell};\tau_{\ell}):=P_{1}(A,p_{\ell};\tau_{\ell})-\wp(\sigma_{0}|%
\tau_{\ell})P_{2}(A,p_{\ell};\tau_{\ell}).
\end{equation*}
Since $\deg P(A,p_{0};\tau_{0})\leq m-1$, there is some $i$ such that $%
A_{\ell,i}\rightarrow \infty$ as $\ell \rightarrow \infty$. Then Proposition \ref{prop5-3} implies that
the corresponding $(a_{1}(A_{\ell,i},p_{\ell},\tau_{\ell}),\cdot \cdot \cdot
,a_{N}(A_{\ell,i},p_{\ell},\tau_{\ell}))$ of $\left( A_{\ell,i},W_{\ell
,i}\right) $ converges to $\infty_{\pm}( p_{0}) $, which yields
\begin{equation*}
\sigma_{0}=\lim_{\ell \rightarrow \infty}\sum_{j=1}^{N}a_{j}( A_{\ell
,i},p_{\ell},\tau_{\ell}) -\sum_{k=1}^{3}\frac{n_{k}\omega_{k}}{2}=\pm
p_{0},
\end{equation*}
a contradiction to the choice of $\sigma_{0}$. This proves $%
b_{m}(p_{0};\tau_{0})$ $\not =0$.
\end{proof}

\begin{lemma}
\label{main thm4}$\deg \sigma_{\mathbf{n,}p}\left( \cdot|\tau \right) =m$ is
independent of $\tau \in \mathbb{H}$ and $p\not \in E_{\tau}[2]$.
\end{lemma}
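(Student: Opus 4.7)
The plan is to leverage Lemma \ref{lem56}(2), which already gives $\deg\sigma_{\mathbf{n,}p}(\cdot|\tau)=m$ on the nonempty Zariski-open set $U:=\{(\tau,p)\in\mathbb{H}\times(E_\tau\setminus E_\tau[2]):b_{m}(p;\tau)\neq 0\}$, and to extend this equality to every $(\tau_0,p_0)$ with $p_0\not\in E_{\tau_0}[2]$ by a one-parameter deformation argument using the compactification $\overline{\Gamma_{\mathbf{n,}p_0}(\tau_0)}$ together with Corollary \ref{coro4-4}.

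Fix such a $(\tau_0,p_0)\notin U$ and choose a holomorphic arc $t\mapsto(\tau_t,p_t)$ with $(\tau_0,p_0)$ at $t=0$ and $(\tau_t,p_t)\in U$ for $0<|t|<\varepsilon$; this is possible because $b_m\not\equiv 0$. Then pick a generic target $\sigma_0\in E_{\tau_0}\setminus(E_{\tau_0}[2]\cup\{\pm[p_0]\})$. For each $t\neq 0$ small, Lemma \ref{lem56}(2) yields $m$ distinct preimages $\{(A_{t,j},W_{t,j})\}_{j=1}^m\subset\Gamma_{\mathbf{n,}p_t}(\tau_t)$ of $\sigma_0$, and by compactness of the total space these admit subsequential limits $Q_j\in\overline{\Gamma_{\mathbf{n,}p_0}(\tau_0)}$ as $t\to 0$. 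By Corollary \ref{coro4-4}, $\sigma_{\mathbf{n,}p_0}(\infty_{\pm})=\pm[p_0]\neq\sigma_0$, so continuity of the family $\sigma_{\mathbf{n,}p_t}$ forces every $Q_j$ to lie in the affine part $\Gamma_{\mathbf{n,}p_0}(\tau_0)$ with $\sigma_{\mathbf{n,}p_0}(Q_j)=\sigma_0$. Hence $\deg\sigma_{\mathbf{n,}p_0}(\cdot|\tau_0)\geq m$.

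For the matching upper bound I would use the factorization of $\wp\circ\sigma_{\mathbf{n,}p}$ through the hyperelliptic double cover $(A,W)\mapsto A$ followed by the rational function $R(A)=P_1(A,p;\tau)/P_2(A,p;\tau)$ of degree at most $m$ in $A$; combined with $\deg\wp=2$, this forces $\deg\sigma_{\mathbf{n,}p}(\cdot|\tau)\leq m$. When $b_m(p_0;\tau_0)\neq 0$ this is immediate; when it vanishes, one cancels the common scalar factor that the coefficients $a_k(p_t;\tau_t),b_k(p_t;\tau_t)$ acquire at $t=0$ along the arc to obtain a reduced representation $\tilde P_1/\tilde P_2$ of the same rational function whose $A$-degree is still at most $m$, and then passes to the limit to conclude $\deg\sigma_{\mathbf{n,}p_0}(\cdot|\tau_0)\leq m$.

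The main obstacle is precisely the case where \emph{all} coefficients $a_k(p_0;\tau_0)$ and $b_k(p_0;\tau_0)$ vanish at $(\tau_0,p_0)$, which is the only case not already handled by Lemma \ref{lem58}: there the ratio $P_1/P_2$ becomes formally $0/0$ and one must extract the correct reduced form. Making the upper-bound step rigorous amounts to a flatness (or degree-constancy) statement for the addition morphism in the algebraic family $\{\overline{\Gamma_{\mathbf{n,}p}(\tau)}\}_{(\tau,p)}$ of irreducible hyperelliptic curves; the lower-bound deformation argument above essentially computes the degree via fiber counts in this total family and bypasses the need to evaluate $P_1/P_2$ directly at the degenerate point.
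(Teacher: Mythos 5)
Your plan shares the paper's two key ingredients (renormalizing $P_1,P_2$ at a degenerate parameter, and using the $A\to\infty$ behaviour $\sigma_{\mathbf{n,}p}\to\pm[p]$ of Corollary \ref{coro4-4} to prevent loss of degree), but both halves of the argument are incomplete exactly where they matter. For the lower bound, from the $m$ distinct preimages $(A_{t,j},W_{t,j})$ of $\sigma_0$ you only obtain subsequential limits $Q_j$ lying in the fiber $\sigma_{\mathbf{n,}p_0}^{-1}(\sigma_0)$; these limits may collide (for instance when the renormalized limit of $P_1-\wp(\sigma_0)P_2$ has multiple roots), and the degree of the limit map only dominates the number of \emph{distinct} $Q_j$. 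So ``$\deg\sigma_{\mathbf{n,}p_0}\geq m$'' does not follow as stated; contrary to your closing remark, the lower bound does not bypass the degree-constancy issue, it runs straight into it. For the upper bound, you yourself identify the case where all $a_k(p_0;\tau_0),b_k(p_0;\tau_0)$ vanish as the main obstacle and defer it to an unproved ``flatness'' statement; moreover, even after cancelling the common factor along your arc you would still need to know that the specialized denominator $\tilde P_2(\cdot,p_0;\tau_0)$ is not identically zero and that the specialized ratio still represents $\wp(\sigma_{\mathbf{n,}p_0}(\cdot)|\tau_0)$, neither of which is addressed.

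The paper closes precisely this gap, and does so without splitting into upper and lower bounds. After dividing $P_1,P_2$ by the common factor $(p-p_0)^{\ell}$ (or, if every coefficient vanishes identically in $p$ at $\tau_0$, by $(\tau-\tau_0)^{\ell}$), it applies Lemma \ref{lem58} \emph{to the renormalized coefficients}: its proof is exactly the escape-to-infinity argument you invoke (via Proposition \ref{prop5-3}, roots $A_{\ell,i}\to\infty$ would force $\sigma_0=\pm p_0$), and it yields the crucial nonvanishing $\tilde b_m(p_0;\tau_0)\neq0$. With that, the proof of Lemma \ref{lem56}-(2) is rerun verbatim at $(\tau_0,p_0)$: for generic $\sigma_0$ the fiber points give distinct roots of $\tilde P_1-\wp(\sigma_0)\tilde P_2$, a polynomial of exact degree $m$, and any additional root would produce an extra preimage of $\pm\sigma_0$, so $\deg\sigma_{\mathbf{n,}p_0}=m$ directly. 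In short, the missing ingredient in your proposal is the renormalized form of Lemma \ref{lem58}; supplying it would simultaneously repair the collision problem in your lower bound and replace the unproved flatness claim in your upper bound.
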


\begin{proof}
By Lemma \ref{lem56}-(2), it suffices to prove that $\deg \sigma_{\mathbf{n,}%
p_{0}}(\cdot|\tau_{0})=m$ for any $\tau_{0}$, $p_{0}\not \in E_{\tau_{0}}[2]$
satisfying $b_{m}(p_{0};\tau_{0})=0$.

\textbf{Case 1.} At least one of the coefficients of $P_{i}(A,p;\tau_{0})$
is not identical $0$ in $p$.

Then we may divide $P_{i}(A,p;\tau_{0})$ by a common factor $(p-p_{0})^{\ell}
$ such that all the coefficients of the new $\tilde{P}_{i}\left( A,p;\tau
_{0}\right) :=P_{i}(A,p;\tau_{0})/(p-p_{0})^{\ell}$ are holomorphic at $p_{0}
$\ and at least one of them do not vanish at $p_{0}$. Then Lemma \ref{lem58}
implies $\tilde{b}_{m}(p_{0};\tau_{0})\not =0$, where $\tilde {b}%
_{m}(p;\tau):=b_{m}(p;\tau)/(p-p_{0})^{\ell}$, and so the same proof as Lemma \ref%
{lem56}-(2) shows that $\deg \sigma_{\mathbf{n,}p_{0}}(\cdot|\tau_{0})=m$.

\textbf{Case 2}. All the coefficients of $P_{i}(A,p;\tau_{0})$ are identical
$0$ in $p$.

Then we may divide $P_{i}(A,p;\tau)$ by a factor $(\tau-\tau_{0})^{\ell}$
such that Case 1 holds, which again implies that $\deg \sigma_{\mathbf{n,}%
p_{0}}(\cdot|\tau_{0})=m$. The proof is complete.
\end{proof}

For the addition map $\sigma_{\mathbf{n}}(\cdot|\tau)$ from $H(\mathbf{n}%
,B,\tau)$, we have the similar result: There are coprime polynomials $\hat {P%
}_{j}(B;\tau)\in \mathbb{Q[}e_{k}(\tau)][B]$ such that%
\begin{equation}
\wp(\sigma_{\mathbf{n}}(B,\hat{W})|\tau)=\frac{\hat{P}_{1}(B;\tau)}{\hat {P}%
_{2}(B;\tau)}.   \label{puu}
\end{equation}
Since (\ref{bwlimit}) says $\sigma_{\mathbf{n}}(B,\hat{W})$ $\rightarrow0$ as $B\rightarrow \infty
$, we have $\deg_{B}\hat{P}_{1}( B;\tau ) >\deg_{B}\hat{P}%
_{2}( B;\tau ) $. Takemura \cite[Proposition 3.2]{Takemura4}
proved that%
\begin{equation}
\deg_{B}\hat{P}_{1}( B;\tau ) =\deg_{B}\hat{P}_{2}( B;\tau) +1\text{ for any }\tau \in \mathbb{H},   \label{113}
\end{equation}%
\begin{equation}
\wp(\sigma_{\mathbf{n}}(B,\hat{W})|\tau)=\frac{4B}{%
[\sum_{k=0}^{3}n_{k}(n_{k}+1)]^{2}}+O(1)\text{ as }B\rightarrow \infty \text{
for fixed }\tau.   \label{114}
\end{equation}
Write%
\begin{equation*}
\hat{P}_{1}(B;\tau)=\sum_{k=0}^{\hat{m}_{1}}\hat{a}_{k}(\tau)B^{k}\text{ \ and }%
\hat{P}_{2}(B;\tau)=\sum_{k=0}^{\hat{m}_{2}}\hat{b}_{k}(\tau)B^{k},
\end{equation*}
where $\hat{a}_{k}(\tau),\hat{b}_{k}(\tau)\in \mathbb{Q[}e_{k}(\tau)]$ with $%
\hat{a}_{\hat{m}_{1}}(\tau)\not \equiv 0$ and $\hat{b}_{\hat{m}%
_{2}}(\tau)\not \equiv 0$. Then we have

\begin{lemma}
\label{cor ind}Under the above notations, the followings hold.

\begin{itemize}
\item[(1)] $\hat{m}_{2}+1=\hat{m}_{1}=:\hat{m}$ and%
\begin{equation}
\frac{\hat{a}_{\hat{m}}(\tau)}{\hat{b}_{\hat{m}-1}(\tau)}=\frac{4}{[\sum
_{k=0}^{3}n_{k}(n_{k}+1)]^{2}}=:C(\mathbf{n}).   \label{ab}
\end{equation}

\item[(2)] $\deg \sigma_{\mathbf{n}}( \cdot|\tau ) =\hat{m}$ is
independent of $\tau \in \mathbb{H}$. Furthermore,%
\begin{equation*}
\deg_{B}\hat{P}_{1}( B;\tau ) =\hat{m}=\deg \sigma_{\mathbf{n}%
}( \cdot|\tau) \text{ if }\hat{a}_{\hat{m}}(\tau)\not =0.
\end{equation*}
\end{itemize}
\end{lemma}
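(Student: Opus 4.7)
For part (1), both claims fall directly out of the two Takemura identities (\ref{113})--(\ref{114}) already invoked. The equality (\ref{113}) is precisely $\hat{m}_{1}=\hat{m}_{2}+1$, so set $\hat{m}:=\hat{m}_{1}$. Expanding the ratio $\hat{P}_{1}(B;\tau)/\hat{P}_{2}(B;\tau)$ at $B=\infty$ gives
\begin{equation*}
\frac{\hat{a}_{\hat{m}}(\tau)}{\hat{b}_{\hat{m}-1}(\tau)}\,B\bigl(1+O(B^{-1})\bigr) = \frac{4B}{[\sum_{k=0}^{3}n_{k}(n_{k}+1)]^{2}}\bigl(1+O(B^{-1})\bigr),
\end{equation*}
by (\ref{114}) and (\ref{puu}). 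Comparing leading coefficients at any $\tau$ where $\hat{a}_{\hat{m}}(\tau),\hat{b}_{\hat{m}-1}(\tau)$ are both nonzero yields the identity $\hat{a}_{\hat{m}}(\tau)/\hat{b}_{\hat{m}-1}(\tau)=C(\mathbf{n})$. Since $\hat{a}_{\hat{m}},\hat{b}_{\hat{m}-1}\in\mathbb{Q}[e_{k}(\tau)]$ and both are nonzero as elements of that ring, $\hat{a}_{\hat{m}}(\tau)=C(\mathbf{n})\hat{b}_{\hat{m}-1}(\tau)$ then holds identically in $\tau$.

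For part (2), the plan is to mimic the flow Lemma \ref{lem56}(2) $\to$ Lemma \ref{lem58} $\to$ Lemma \ref{main thm4} used for $\sigma_{\mathbf{n},p}$, replacing Corollary \ref{coro4-4} by the counterpart asymptotic (\ref{bwlimit}). First suppose $\hat{a}_{\hat{m}}(\tau)\neq 0$. For a generic $\sigma_{0}\in E_{\tau}\setminus(E_{\tau}[2]\cup\{0\})$ the polynomial $Q(B):=\hat{P}_{1}(B;\tau)-\wp(\sigma_{0}|\tau)\hat{P}_{2}(B;\tau)$ has degree exactly $\hat{m}$ (because $\deg_{B}\hat{P}_{2}=\hat{m}-1<\hat{m}$) and only simple zeros. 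Writing $\sigma_{\mathbf{n}}^{-1}(\sigma_{0})=\{(B_{i},\hat{W}_{i})\}_{i=1}^{\tilde{m}}$ with $\tilde{m}=\deg\sigma_{\mathbf{n}}(\cdot|\tau)$, the $H(\mathbf{n},B,\tau)$ analog of Theorem \ref{thm3} forces each $\hat{W}_{i}\neq 0$ (else $\sigma_{0}\in E_{\tau}[2]$), and the $\hat{W}\mapsto -\hat{W}$ involution argument from Lemma \ref{lem56}(2) forces the $B_{i}$ to be pairwise distinct (else $\sigma_{0}=-\sigma_{0}$ in $E_{\tau}$). Thus each $B_{i}$ is a distinct root of $Q$, so $\tilde{m}\leq\hat{m}$; conversely every root of $Q$ produces a preimage of either $\sigma_{0}$ or $-\sigma_{0}$ (uniquely after fixing the sign of $\hat{W}$), so each root contributes one element to $\sigma_{\mathbf{n}}^{-1}(\sigma_{0})$ and $\tilde{m}=\hat{m}$.

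To remove the hypothesis $\hat{a}_{\hat{m}}(\tau)\neq 0$, take any $\tau_{0}\in\mathbb{H}$ with $\hat{a}_{\hat{m}}(\tau_{0})=0$ and run Case 1/Case 2 of Lemma \ref{main thm4}: divide $\hat{P}_{1},\hat{P}_{2}$ by a maximal local common factor $(\tau-\tau_{0})^{\ell}$ (in Case 2, first divide by a common $p$-free factor as there, but since our coefficients depend only on $\tau$ one power of $(\tau-\tau_{0})$ suffices). The result is new polynomials $\tilde{P}_{1},\tilde{P}_{2}$ with $\tilde{P}_{1}/\tilde{P}_{2}=\hat{P}_{1}/\hat{P}_{2}$, whose coefficients are holomorphic at $\tau_{0}$ and do not all vanish there. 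The remaining and main obstacle is the analog of Lemma \ref{lem58}: one must show $\tilde{a}_{\hat{m}}(\tau_{0})\neq 0$; once this is established, the first step applied to $\tilde{P}_{1},\tilde{P}_{2}$ at $\tau_{0}$ gives $\deg\sigma_{\mathbf{n}}(\cdot|\tau_{0})=\hat{m}$, independent of $\tau$. The key input for this analog is precisely (\ref{bwlimit}): if $\tilde{a}_{\hat{m}}(\tau_{0})=0$, then for generic nonzero $\sigma_{0}\in E_{\tau_{0}}$ the polynomial $\tilde{P}_{1}(B;\tau_{0})-\wp(\sigma_{0}|\tau_{0})\tilde{P}_{2}(B;\tau_{0})$ has degree strictly less than $\hat{m}$, so along any sequence $\tau_{\ell}\to\tau_{0}$ with $\tilde{a}_{\hat{m}}(\tau_{\ell})\neq 0$, at least one of the $\hat{m}$ preimage values $B_{\ell,i}$ of $\sigma_{0}$ must escape to $\infty$; but then (\ref{bwlimit}) forces $\sigma_{0}=\lim_{\ell}\sigma_{\mathbf{n}}(B_{\ell,i},\hat{W}_{\ell,i}|\tau_{\ell})=0$, contradicting the choice of $\sigma_{0}$.
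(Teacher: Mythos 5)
Your proposal is correct and follows exactly the route the paper itself indicates: the paper omits the proof of Lemma \ref{cor ind}, saying only that it is "similar to those of Lemmas \ref{lem56}--\ref{main thm4} with minor modifications," and your argument is precisely that adaptation — Takemura's identities (\ref{113})--(\ref{114}) in place of (\ref{p-p}) for part (1), the simple-roots/involution counting of Lemma \ref{lem56}(2) for the degree identity, and the escape-to-infinity argument with (\ref{bwlimit}) replacing Proposition \ref{prop5-3} for the analogs of Lemmas \ref{lem58} and \ref{main thm4}. No gaps; you have supplied the details the paper leaves to the reader.
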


\begin{proof}
The proof is similar to those of Lemmas \ref{lem56}-\ref{main thm4} with
minor modifications; we omit the details here.
\end{proof}

Recall Theorem \ref{cor7} that%
\begin{equation*}
\lim_{p\rightarrow \frac{\omega_{k}}{2}}\overline{Y_{\mathbf{n},p}(
\tau ) }=\overline{Y_{\mathbf{n}_{k}^{+}}( \tau) }\cup
\overline{Y_{\mathbf{n}_{k}^{-}}( \tau ) }.
\end{equation*}
Thus it is reasonably expected that the degree of $\sigma_{\mathbf{n,}p}$
should be the sum of the degree of $\sigma_{\mathbf{n}_{k}^{+}}$ and $%
\sigma_{\mathbf{n}_{k}^{-}}$. The following result confirms this conjecture,
which plays a crucial role in the proof of Theorem \ref{degreeformula}.

\begin{theorem}
\label{main thm}Let $\mathbf{n}=(n_{0},n_{1},n_{2},n_{3})$ where $n_{k}\in
\mathbb{Z}_{\geq0}$ and $p\in E_{\tau}\backslash E_{\tau}[2]$. Then for each
$k=0,1,2,3$, we have
\begin{equation*}
\deg \sigma_{\mathbf{n,}p}=\deg \sigma_{\mathbf{n}_{k}^{+}}+\deg \sigma _{%
\mathbf{n}_{k}^{-}}.
\end{equation*}
\end{theorem}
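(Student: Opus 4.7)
The plan is to exploit the constancy of $m := \deg\sigma_{\mathbf{n},p}$ in $p$ (Lemma \ref{main thm4}) by letting $p\to\omega_k/2$, and then to identify the generic fiber of $\sigma_{\mathbf{n},p}$ in the limit with the disjoint union of the generic fibers of $\sigma_{\mathbf{n}_k^+}$ and $\sigma_{\mathbf{n}_k^-}$. Without loss of generality take $k=0$; the other cases follow by the shift $z\mapsto z+\omega_k/2$, since all the analysis of Section \ref{limiting} carries over verbatim. Fix a generic $\sigma_0\in E_\tau\setminus (E_\tau[2]\cup\{0\})$ such that the three fibers $\sigma_{\mathbf{n},p}^{-1}(\sigma_0)$ (for all small $p\neq 0$), $\sigma_{\mathbf{n}_0^+}^{-1}(\sigma_0)$, and $\sigma_{\mathbf{n}_0^-}^{-1}(\sigma_0)$ each consist of simple points; only finitely many $\sigma_0$ fail this by Lemmas \ref{lem56} and \ref{cor ind}.

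Write $\sigma_{\mathbf{n},p}^{-1}(\sigma_0)=\{(A_i(p),W_i(p))\}_{i=1}^m$ and $\mathbf{a}_i(p):=i_{\mathbf{n},p}(A_i(p),W_i(p))$. Pass to a subsequence so that each $\mathbf{a}_i(p)$ converges in Sym$^N E_\tau$. Proposition \ref{prop4} rules out $B_i(p)\to\infty$ (otherwise $\mathbf{a}_i(p)\to\infty(0)$, forcing $\sigma_{\mathbf{n},p}(A_i(p),W_i(p))\to 0\neq\sigma_0$). Hence Proposition \ref{prop11} forces $A_i(p)=\alpha_0^{\epsilon_i}p^{-1}+\alpha_{1,i}\,p+o(p)$ with $\epsilon_i\in\{+,-\}$ and $\alpha_0^\pm$ as in \eqref{alph}, while Proposition \ref{prop3} identifies the limit as a point of $Y_{\mathbf{n}_0^{\epsilon_i}}(\tau)$ whose associated $(B_i^{\epsilon_i},\hat W_i^{\epsilon_i})\in\Gamma_{\mathbf{n}_0^{\epsilon_i}}(\tau)$ lies in $\sigma_{\mathbf{n}_0^{\epsilon_i}}^{-1}(\sigma_0)$ by continuity of the addition map. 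Setting $m^\pm:=\#\{i:\epsilon_i=\pm\}$, we have $m=m^++m^-$, so the theorem reduces to the identities $m^\pm=\hat m^\pm$, where $\hat m^\pm:=\deg\sigma_{\mathbf{n}_0^\pm}$.

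To establish $m^\pm=\hat m^\pm$ we use the rational representation $\wp(\sigma_{\mathbf{n},p}(A,W)|\tau)=P_1(A,p;\tau)/P_2(A,p;\tau)$, so that $\sigma_{\mathbf{n},p}^{-1}(\sigma_0)$ corresponds bijectively (via $A$) to the $m$ roots of $P_1-\wp(\sigma_0)P_2$. Along the curve $A=\alpha_0^+/p+\alpha_1\,p$, Propositions \ref{prop11}, \ref{prop3} combined with the continuity of the addition map and \eqref{puu} yield the limit identity
\begin{equation*}
\lim_{p\to 0}\frac{P_1(\alpha_0^+/p+\alpha_1 p,p;\tau)}{P_2(\alpha_0^+/p+\alpha_1 p,p;\tau)}=\frac{\hat P_1^+(B^+(\alpha_1);\tau)}{\hat P_2^+(B^+(\alpha_1);\tau)},
\end{equation*}
where $B^+(\alpha_1):=-(2n_0+1)\alpha_1-\sum_{k=1}^{3} n_k(n_k+1)e_k$ is read off from \eqref{qe}, and $\hat P_1^+,\hat P_2^+$ are the analogous polynomials for $\sigma_{\mathbf{n}_0^+}$. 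By Lemma \ref{cor ind}(1), $\deg_B \hat P_1^+=\hat m^+=\deg_B\hat P_2^++1$, so the restriction of the equation $P_1-\wp(\sigma_0)P_2=0$ to this curve converges, as $p\to 0$, to $\hat P_1^+(B^+(\alpha_1))-\wp(\sigma_0)\hat P_2^+(B^+(\alpha_1))=0$, a polynomial in $\alpha_1$ of degree exactly $\hat m^+$ with simple roots by the choice of $\sigma_0$. The $m^+$ values $\alpha_{1,i}$ with $\epsilon_i=+$ are exactly the roots of the family equation remaining bounded as $p\to 0$, since the $m^-$ roots with $\epsilon_i=-$ escape as $(\alpha_0^--\alpha_0^+)/p^2+O(1)$ under the substitution. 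Continuity of roots thus gives $m^+=\hat m^+$; the symmetric argument with $\alpha_0^-$ gives $m^-=\hat m^-$, finishing the proof.

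The principal obstacle is the rigorous justification of the polynomial limit identity above: the coefficients $a_k(p;\tau),b_k(p;\tau)$ of $P_1,P_2$ in $A$ lie in $\mathbb{Q}[e_k(\tau),\wp(p|\tau),\wp'(p|\tau)]$ and so have poles of order up to $3$ at $p=0$, so one must first multiply $P_1-\wp(\sigma_0)P_2$ by an appropriate power of $p$ to clear denominators, then track the joint $(\alpha_1,p)$-expansion of the resulting polynomial along $A=\alpha_0^\pm/p+\alpha_1 p$ and verify that its restriction to $p=0$ equals, up to a nonzero scalar, $\hat P_1^\pm(B^\pm(\alpha_1))-\wp(\sigma_0)\hat P_2^\pm(B^\pm(\alpha_1))$. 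The asymmetry $\deg_B\hat P_1^\pm=\deg_B\hat P_2^\pm+1$ from Lemma \ref{cor ind}(1) is crucial here: it prevents cancellation and guarantees that the limit polynomial has exactly the expected degree $\hat m^\pm$. The finitely many $\tau$ at which $\hat a_{\hat m^\pm}(\tau)=0$ or the leading coefficients of $P_1,P_2$ in $A$ degenerate can be excluded using the constancy results of Lemmas \ref{main thm4} and \ref{cor ind}(2).
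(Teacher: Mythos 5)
Your reduction of the theorem to the two identities $m^{\pm}=\hat m^{\pm}$ is fine and runs parallel to the paper's setup, but the step where you conclude $m^{+}=\hat m^{+}$ has a genuine gap. What Propositions \ref{prop11}, \ref{prop3}, continuity of the addition map and \eqref{puu} actually give is only the \emph{pointwise in $\alpha_1$} convergence of the ratio $P_1/P_2$ along $A=\alpha_0^{+}p^{-1}+\alpha_1 p$ to $\hat P_1^{+}(B^{+}(\alpha_1))/\hat P_2^{+}(B^{+}(\alpha_1))$. Pointwise convergence of a ratio does not identify the limit of the normalized numerator: if $F_p$ denotes $p^{\kappa}\left(P_1-\wp(\sigma_0)P_2\right)$ restricted to the curve and $G_p$ denotes $p^{\kappa}P_2$ so restricted, with $\kappa$ chosen so that $F_p\to F_0\not\equiv0$, $G_p\to G_0\not\equiv0$, then all one can deduce is $F_0=c\,\left(\hat P_1^{+}-\wp(\sigma_0)\hat P_2^{+}\right)(B^{+}(\alpha_1))\,H(\alpha_1)$ and $G_0=c\,\hat P_2^{+}(B^{+}(\alpha_1))\,H(\alpha_1)$ for some \emph{common polynomial factor} $H$. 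Hurwitz/continuity of roots then gives $m^{+}=\deg F_0=\hat m^{+}+\deg H$, i.e.\ only $m^{+}\geq\hat m^{+}$; a nonconstant $H$ corresponds exactly to several sheets of $\sigma_{\mathbf{n},p}^{-1}(\sigma_0)$ collapsing onto one point of $\sigma_{\mathbf{n}_0^{+}}^{-1}(\sigma_0)$ as $p\to0$, and nothing in your argument excludes this. In particular your stated reason, that the asymmetry $\deg_B\hat P_1^{\pm}=\deg_B\hat P_2^{\pm}+1$ of Lemma \ref{cor ind}(1) ``prevents cancellation,'' does not work: the degree difference is invariant under cancelling a common factor, so it cannot detect or exclude one.

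This no-collapsing statement is precisely the hard core of the paper's proof, and there it is obtained by analyzing the \emph{other} fiber $\sigma_{\mathbf{n},p}^{-1}(0)$, i.e.\ the roots of $P_2$: Lemma \ref{lemma5} shows that exactly $m_1-1$ (resp.\ $m_2-1$) of the pole families converge to equations of type $\mathbf{n}^{+}$ (resp.\ $\mathbf{n}^{-}$) while exactly two diverge with $A_i^{\infty}(p)p\to\beta\notin\{-(\tfrac14+n_0),\tfrac34+n_0\}$ (its proof uses the power counting \eqref{d+}--\eqref{xxx} and the degree relations \eqref{113}--\eqref{114} in an essential way), and Corollary \ref{cor11} together with its analogue for the pole families places the limiting zero- and pole-parameters inside the zero sets of the \emph{coprime} polynomials $\hat P_1^{\pm}$ and $\hat P_2^{\pm}$ respectively; this is what makes the limiting representation \eqref{bbbb} reduced, so that comparison with \eqref{pu5-5} yields $m_2=m^{-}$ exactly rather than $m_2\geq m^{-}$. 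Your proposal never touches the fiber over $0$ and never uses \eqref{113}--\eqref{114} substantively, so the ``principal obstacle'' you flag is not a bookkeeping issue about clearing poles of the coefficients: to close it you would need, at minimum, to run the same substitution on $P_2$, show that its bounded root families have limits lying in $\sigma_{\mathbf{n}_0^{\pm}}^{-1}(0)$ (hence at zeros of $\hat P_2^{\pm}$), and then invoke coprimality of $\hat P_1^{\pm}-\wp(\sigma_0)\hat P_2^{\pm}$ and $\hat P_2^{\pm}$ to force $H$ to be constant --- that is, reintroduce in some form the pole analysis your plan omits.
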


Before giving the proof of Theorem \ref{main thm}, we would like to apply it
to prove Theorem \ref{degreeformula}.

\begin{proof}[Proof of Theorem \protect\ref{degreeformula}]
It was proved by Wang and the third author \cite{LW2} that%
\begin{equation}
\deg \sigma_{(n_{0},0,0,0)}=\frac{n_{0}(n_{0}+1)}{2},\text{ \ \ }n_{0}\in
\mathbb{Z}_{\geq0}.   \label{degn}
\end{equation}
Note that $(0,0,0,0)$ and $(-1,0,0,0)$ gives the same GLE, so (\ref{degn})
also holds for $n_{0}=-1$. Then by Theorem \ref{main thm}, we have
\begin{align}
\deg \sigma_{(n_{0},0,0,0),p} & =\deg \sigma_{(n_{0}+1,0,0,0)}+\deg
\sigma_{(n_{0}-1,0,0,0)}  \label{1111} \\
& =\frac{( n_{0}+1)( n_{0}+2) }{2}+\frac{(
n_{0}-1) n_{0}}{2}  \notag \\
& =n_{0}( n_{0}+1) +1.  \notag
\end{align}

Suppose that
\begin{equation*}
\deg \sigma_{(n_{0},k,0,0)}=\frac{n_{0}( n_{0}+1) }{2}+\frac{%
k(k+1)}{2}
\end{equation*}
for all $0\leq k\leq n_{1}-1$. We claim that
\begin{equation}
\deg \sigma_{(n_{0},n_{1},0,0)}=\frac{n_{0}( n_{0}+1) }{2}+\frac{%
n_{1}( n_{1}+1) }{2}.   \label{1114}
\end{equation}
We note that the following identities hold
\begin{align*}
& \deg \sigma_{(n_{0},n_{1},0,0)}+\deg \sigma_{(n_{0},n_{1}-2,0,0)} \\
& =\deg \sigma_{(n_{0},n_{1}-1,0,0),p} \\
& =\deg \sigma_{(n_{0}-1,n_{1}-1,0,0)}+\deg \sigma_{(n_{0}+1,n_{1}-1,0,0)},
\end{align*}
where the first identity follows from Theorem \ref{main thm} with $k=1$ and the second one follows from the $k=0$. Hence
{\allowdisplaybreaks
\begin{align}
&\quad \deg \sigma_{(n_{0},n_{1},0,0)}  \label{1115} \\
& =\deg \sigma_{(n_{0}-1,n_{1}-1,0,0)}+\deg
\sigma_{(n_{0}+1,n_{1}-1,0,0)}-\deg \sigma_{(n_{0},n_{1}-2,0,0)}  \notag \\
& =\frac{( n_{0}-1) n_{0}}{2}+\frac{( n_{1}-1) n_{1}}{2%
}+\frac{( n_{0}+1) ( n_{0}+2) }{2}+\frac{(
n_{1}-1) n_{1}}{2}  \notag \\
&\quad -\frac{n_{0}( n_{0}+1) }{2}-\frac{( n_{1}-2) (
n_{1}-1) }{2}  \notag \\
& =\frac{n_{0}( n_{0}+1) }{2}+\frac{n_{1}( n_{1}+1) }{2%
}.  \notag
\end{align}
}%
This proves (\ref{1114}).

Now we claim that
\begin{equation}
\deg \sigma_{(n_{0},n_{1},0,0),p}=n_{0}( n_{0}+1) +n_{1}(
n_{1}+1) +1.   \label{1116}
\end{equation}
Indeed, a direct consequence of (\ref{1114}) and Theorem \ref{main thm} imply that%
\begin{align}
& \deg \sigma_{(n_{0},n_{1},0,0),p}  \label{1117} \\
& =\deg \sigma_{(n_{0}+1,n_{1},0,0)}+\deg \sigma_{(n_{0}-1,n_{1},0,0)}
\notag \\
& =\frac{( n_{0}+1) ( n_{0}+2) }{2}+\frac {n_{1}(
n_{1}+1) }{2}+\frac{( n_{0}-1) n_{0}}{2}+\frac{n_{1}(
n_{1}+1) }{2}  \notag \\
& =n_{0}( n_{0}+1) +n_{1}(n_{1}+1) +1.  \notag
\end{align}
This proves (\ref{1116}).

By the same argument as (\ref{1115}) and (\ref{1117}), we could derive the
formulas (\ref{degh})-(\ref{degp}) for any $\mathbf{n}%
=(n_{0},n_{1},n_{2},n_{3})$ via induction.
\end{proof}

The rest of this section is devoted to the proof of Theorem \ref{main thm}
for $k=0$ (the other cases $k\in \{1,2,3\}$ can be proved in an analogous
way and we omit the details). Denote $\mathbf{n}_{0}^{\pm}=\mathbf{n}^{\pm}$
and%
\begin{equation*}
\deg \sigma_{\mathbf{n,}p}=m,\text{ }\deg \sigma_{\mathbf{n}^{+}}=m^{+},%
\text{ }\deg \sigma_{\mathbf{n}^{-}}=m^{-}.
\end{equation*}
Our goal is to prove $m=m^{+}+m^{-}$. Recalling Lemmas \ref{main thm4} and %
\ref{cor ind}, in the sequel we fix $\tau$ (so we will omit the notation $%
\tau$ freely) such that%
\begin{equation*}
\hat{a}_{m^{\pm}}(\tau)\not =0\text{ and }b_{m}(\cdot;\tau)\not \equiv 0.
\end{equation*}
Then $\wp(\sigma_{\mathbf{n}^{\pm}}(B,\hat{W}))$ (see (\ref{puu}) and (\ref%
{ab})) can be rewritten as%
\begin{equation}
\wp(\sigma_{\mathbf{n}^{\pm}}(B,\hat{W}))=C(\mathbf{n}^{\pm})\frac{\prod
_{i=1}^{m^{\pm}}( B-\hat{B}_{i}^{0\pm}) }{\prod_{i=1}^{m^{\pm}-1}%
( B-\hat{B}_{i}^{\infty \pm}) },   \label{pu5-5}
\end{equation}
where $\hat{B}_{i}^{0\pm}$'s and $\hat{B}_{i}^{\infty \pm}$'s are all the
zeros of $\hat{P}_{1}( B) $ and $\hat{P}_{2}( B) $
(corresponding to $\mathbf{n}^{\pm}$) respectively. On the other hand, since
$b_{m}(\cdot;\tau)\not \equiv 0$, we take any sequence $p\rightarrow0$ such
that $b_{m}(p)\not =0$ and $\wp(p)\not =0$, then $\wp( \sigma _{%
\mathbf{n,}p}( A,W) ) $ (see (\ref{pu}) and (\ref{a-b-k})) can be rewritten as%
\begin{equation}
\wp( \sigma_{\mathbf{n,}p}( A,W) ) =\wp (
p) \frac{\prod_{i=1}^{m}(A-A_{i}^{0}(p))}{\prod_{j=1}^{m}(A-A_{j}^{%
\infty}(p))},   \label{pu5}
\end{equation}
where $A_{i}^{0}(p)$'s and $A_{j}^{\infty}(p)$'s are all the zeros of $%
P_{1}(A,p)$ and $P_{2}(A,p)$ respectively.

To prove $m=m^{+}+m^{-}$, we need to study the asymptotics of $A_{i}^{0}(p)$%
's and $A_{j}^{\infty}(p)$'s as $p\rightarrow0$. By Corollary \ref{u0} and
Proposition \ref{prop11}, $A_{i}^{0}(p)$ satisfies the asymptotic formula (%
\ref{app}) as $p\rightarrow0$. We denote $A_{i}^{0}(p)$ by $%
A_{i}^{0^{+}}(p)$ if
\begin{equation}
A_{i}^{0^{+}}(p)=-( \tfrac{1}{4}+n_{0})
p^{-1}+\alpha_{i}^{0^{+}}p+o(p)\text{ for some }\alpha_{i}^{0^{+}}\in
\mathbb{C},   \label{q1}
\end{equation}
and by $A_{i}^{0^{-}}(p)$ if
\begin{equation}
A_{i}^{0^{-}}(p)=( \tfrac{3}{4}+n_{0})
p^{-1}+\alpha_{i}^{0^{-}}p+o(p)\text{ for some }\alpha_{i}^{0^{-}}\in
\mathbb{C}.   \label{q2}
\end{equation}
We assume that there are $A_{1}^{0^{+}}(p),\cdot \cdot
\cdot,A_{m_{1}}^{0^{+}}(p)$ and $A_{1}^{0^{-}}(p),\cdot \cdot
\cdot,A_{m_{2}}^{0^{-}}(p)$ (counted with multiplicity), so $m=m_{1}+m_{2}$.
Let $B_{i}^{0^{\pm}}(p)$ be the coefficient of GLE$(\mathbf{n,}%
p,A_{i}^{0^{\pm}}(p))$. Then by (\ref{qe}), we have%
\begin{equation*}
B_{i}^{0^{\pm}}(p)\rightarrow B_{i}^{0^{\pm}}\text{ as }p\rightarrow0,
\end{equation*}
where%
\begin{equation}
B_{i}^{0^{+}}:=-(2n_{0}+1)\alpha_{i}^{0^{+}}-%
\sum_{k=1}^{3}n_{k}(n_{k}+1)e_{k},   \label{q11}
\end{equation}%
\begin{equation}
B_{i}^{0^{-}}:=(2n_{0}+1)\alpha_{i}^{0^{-}}-%
\sum_{k=1}^{3}n_{k}(n_{k}+1)e_{k}.   \label{q12}
\end{equation}
Also by Proposition \ref{prop11}, GLE$(\mathbf{n,}p,A_{i}^{0^{\pm}}(p))%
\rightarrow$H$(\mathbf{n}^{\pm},B_{i}^{0^{\pm}})$ as $p\rightarrow0$. Since (%
\ref{pu5}) says $\wp(\sigma_{\mathbf{n,}p}(A_{i}^{0^{\pm}}(p),W_{i}^{0^{%
\pm}}(p)))=0$ for each $p$, we have%
\begin{equation*}
\wp(\sigma_{\mathbf{n}^{\pm}}(B_{i}^{0^{\pm}},W_{i}^{0^{\pm}}))=\lim
_{p\rightarrow0}\wp \left( \sigma_{\mathbf{n,}p}(A_{i}^{0^{%
\pm}}(p),W_{i}^{0^{\pm}}(p))\right) =0.
\end{equation*}
Recalling $\{ \hat{B}_{1}^{0^{+}},\cdot \cdot \cdot,\hat{B}_{m^{+}}^{0^{+}}\}
$ and $\{ \hat{B}_{1}^{0^{-}},\cdot \cdot \cdot,\hat{B}_{m^{-}}^{0^{-}}\}$
in (\ref{pu5-5})\textit{, }our above argument yields the following corollary.

\begin{corollary}
\label{cor11}$B_{i}^{0^{+}}$ $\in \{ \hat{B}_{1}^{0^{+}},\cdot \cdot \cdot ,%
\hat{B}_{m^{+}}^{0^{+}}\}$ for each $i$ $=1,2,\cdot \cdot \cdot,m_{1}$ and $%
B_{j}^{0^{-}}\in \{ \hat{B}_{1}^{0^{-}},\cdot \cdot \cdot,\hat{B}%
_{m^{-}}^{0^{-}}\}$ for each $j$ $=1,2,\cdot \cdot \cdot,m_{2}$.
\end{corollary}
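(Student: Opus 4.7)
The plan is to pass the identity $\wp(\sigma_{\mathbf{n},p}(A_i^{0^\pm}(p),W_i^{0^\pm}(p)))=0$, which holds along the sequence by \eqref{pu5}, to the limit $p\to 0$ and interpret the limit in terms of the addition map for $H(\mathbf{n}^\pm,\cdot,\tau)$. The whole argument is a direct chaining of Propositions \ref{prop11} and \ref{prop3}.

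First I would apply Proposition \ref{prop11}. Expansion \eqref{q1} for $A_i^{0^+}(p)$ is exactly the form needed there with $\alpha_0^+=-(\tfrac14+n_0)$; hence $I_{\mathbf{n}}(z;p,A_i^{0^+}(p),\tau)$ converges to $I_{\mathbf{n}^+}(z;B_i^{0^+},\tau)$, the accessory parameter $B_i^{0^+}$ being given by \eqref{q11}. Analogously \eqref{q2} yields convergence to $I_{\mathbf{n}^-}(z;B_i^{0^-},\tau)$ with $B_i^{0^-}$ as in \eqref{q12}, so each $B_i^{0^\pm}$ is a legitimate accessory parameter for the corresponding limiting equation $H(\mathbf{n}^\pm,\cdot,\tau)$. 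Next, set $\mathbf{a}(p):=i_{\mathbf{n},p}(A_i^{0^+}(p),W_i^{0^+}(p))$ and pass to a subsequence for which $\mathbf{a}(p)\to\mathbf{a}^0$ in $\mathrm{Sym}^NE_\tau$. Proposition \ref{prop3} presents two alternatives, corresponding respectively to the limiting equation being $H(\mathbf{n}^+,\cdot,\tau)$ or $H(\mathbf{n}^-,\cdot,\tau)$. Since we have already identified the limit as $H(\mathbf{n}^+,B_i^{0^+},\tau)$, alternative (i) is forced, giving $\mathbf{a}^0=i_{\mathbf{n}^+}(B_i^{0^+},W_i^{0^+})$ for some Wronskian $W_i^{0^+}$.

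Because the sum $\sum[a_j]-\sum\tfrac{n_k\omega_k}{2}$ is continuous in $\mathbf{a}\in\mathrm{Sym}^NE_\tau$, the above convergence gives $\sigma_{\mathbf{n},p}(A_i^{0^+}(p),W_i^{0^+}(p))\to\sigma_{\mathbf{n}^+}(B_i^{0^+},W_i^{0^+})$ in $E_\tau$, and then continuity of $\wp$ together with the vanishing of the left-hand side of \eqref{pu5} along the sequence yields $\wp(\sigma_{\mathbf{n}^+}(B_i^{0^+},W_i^{0^+}))=0$. By the factorization \eqref{pu5-5} (and $C(\mathbf{n}^+)\neq 0$), this is equivalent to $B_i^{0^+}\in\{\hat{B}_1^{0^+},\ldots,\hat{B}_{m^+}^{0^+}\}$. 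The symmetric argument for $A_i^{0^-}(p)$ finishes the proof. The only delicate point is selecting the correct alternative in Proposition \ref{prop3}; this is exactly why Proposition \ref{prop11} must be invoked first, since the sign of the $p^{-1}$-coefficient of $A(p)$ determines which limiting potential appears and hence which family $Y_{\mathbf{n}^\pm}(\tau)$ contains $\mathbf{a}^0$.
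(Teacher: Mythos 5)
Your argument is correct and is essentially the paper's own: the paper likewise combines Proposition \ref{prop11} (the expansions \eqref{q1}--\eqref{q2} force GLE$(\mathbf{n},p,A_i^{0^\pm}(p))\to H(\mathbf{n}^\pm,B_i^{0^\pm})$ with $B_i^{0^\pm}$ as in \eqref{q11}--\eqref{q12}), passes the identity $\wp(\sigma_{\mathbf{n},p}(A_i^{0^\pm}(p),W_i^{0^\pm}(p)))=0$ from \eqref{pu5} to the limit to get $\wp(\sigma_{\mathbf{n}^\pm}(B_i^{0^\pm},W_i^{0^\pm}))=0$, and concludes via the factorization \eqref{pu5-5}. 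Your explicit appeal to Proposition \ref{prop3} to justify convergence of the zero sets (and hence of the addition map) merely spells out what the paper leaves implicit, so there is nothing to correct.
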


Later we will prove $m_{1}=m^{+}$ and $m_{2}=m^{-}$. Indeed, our proof will imply $\{ B_{1}^{0^{+}},\cdot \cdot \cdot ,%
B_{m_1}^{0^{+}}\}=\{ \hat{B}_{1}^{0^{+}},\cdot \cdot \cdot ,%
\hat{B}_{m^{+}}^{0^{+}}\}$ and $\{ B_{1}^{0^{-}},\cdot \cdot \cdot ,%
B_{m_2}^{0^{-}}\}=\{ \hat{B}_{1}^{0^{-}},\cdot \cdot \cdot ,%
\hat{B}_{m^{-}}^{0^{-}}\}$. See (\ref{bbbb}) below.

Next we the asymptotics of $A_{i}^{\infty}(p)$'s as $p\rightarrow0$. Since (%
\ref{pu5}) says
\begin{equation}
\sigma_{\mathbf{n,}p}(A_{i}^{\infty}(p),W_{i}^{\infty}(p))=0\quad\text{ for each }%
p,   \label{0im}
\end{equation}
it might happen that the corresponding GLE$(\mathbf{n},p,A_{i}^{\infty}%
\left( p\right) )$ does not converge as $p\rightarrow0$. If GLE$(\mathbf{n}%
,p,A_{i}^{\infty}\left( p\right) )$ converges, again $A_{i}^{\infty}(p)$
satisfies the asymptotic formula (\ref{app}) as $%
p\rightarrow0$, and we denote it by $A_{i}^{\infty^{+}}(p)$ if
\begin{equation}
A_{i}^{\infty^{+}}(p)=-( \tfrac{1}{4}+n_{0}) p^{-1}+\alpha
_{i}^{\infty^{+}}p+o(p)\text{ for some }\alpha_{i}^{\infty^{+}}\in \mathbb{C}%
,   \label{q1-1}
\end{equation}
and by $A_{i}^{\infty^{-}}(p)$ if
\begin{equation}
A_{i}^{\infty^{-}}(p)=( \tfrac{3}{4}+n_{0}) p^{-1}+\alpha
_{i}^{\infty^{-}}p+o(p)\text{ for some }\alpha_{i}^{\infty^{-}}\in \mathbb{C}%
.   \label{q2-1}
\end{equation}
If GLE$(\mathbf{n},p,A_{i}^{\infty}\left( p\right) )$ does not converge,
then there are three alternatives of $A_{i}^{\infty}\left( p\right) $ up to
a subsequence, according to Proposition \ref{prop11}:

\begin{itemize}
\item[\textbf{(D-i)}] $A_{i}^{\infty}(p)=A_{i}^{\infty_{1}}(p)$ where
\begin{equation*}
A_{i}^{\infty_{1}}(p)=-( \tfrac{1}{4}+n_{0}) p^{-1}+\alpha
_{i}^{\infty_{1}}+o(1) \text{, }\alpha_{i}^{\infty_{1}}\not =0,
\end{equation*}

\item[\textbf{(D-ii)}] $A_{i}^{\infty}(p)=A_{i}^{\infty_{2}}(p)$ where%
\begin{equation*}
A_{i}^{\infty_{2}}(p)=( \tfrac{3}{4}+n_{0}) p^{-1}+\alpha
_{i}^{\infty_{2}}+o(1) \text{, }\alpha_{i}^{\infty_{2}}\not =0,
\end{equation*}

\item[\textbf{(D-iii})] $A_{i}^{\infty}(p)=A_{i}^{\infty_{3}}(p)$ such that
\begin{equation*}
A_{i}^{\infty_{3}}(p)p\rightarrow \beta \in \mathbb{C\cup \{ \infty \}}%
\backslash \{-(\tfrac{1}{4}+n_{0}),(\tfrac{3}{4}+n_{0})\}.
\end{equation*}
\end{itemize}

The key step is to prove the following result.

\begin{lemma}
\label{lemma5} Recall (\ref{0im}) that $\sigma_{\mathbf{n,}p}^{-1}\left(
0\right) $ $=\{ \left( A_{i}^{\infty}(p),W_{i}^{\infty}(p)\right)
\}_{i=1}^{m}$. Then \textit{there are }$m_{1}-1$\textit{\ of }$\{$\textit{GLE%
}$\mathit{(}\mathbf{n},p,A_{i}^{\infty}(p)\mathit{)}\}_{i=1}^{m}$\textit{\
converge to H}$\mathit{(}\mathbf{n}^{+},B\mathit{)}$ for some $B$\textit{,
and }$m_{2}-1$\textit{\ of them converge to H}$\mathit{(}\mathbf{n}^{-},B%
\mathit{)}$\textit{\ for some }$B$\textit{, and\ the rest two (say }$%
A_{m-1}^{\infty}(p)$\textit{\ and }$A_{m}^{\infty}(p)$\textit{)} \textit{do
not converge. Moreover, }$A_{m-1}^{\infty}(p)$\textit{\ and }$%
A_{m}^{\infty}(p)$ \textit{belong to the case} \textbf{(D-iii)}\textit{\ with%
} $A_{i}^{\infty}(p)p\rightarrow \beta_{i}\in \mathbb{C}\backslash \{-(%
\tfrac{1}{4}+n_{0}),(\tfrac{3}{4}+n_{0})\}$, $i=m-1,m$ \textit{as} $%
p\rightarrow0$.
\end{lemma}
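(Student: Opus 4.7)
The plan is to substitute the asymptotic expansions $A = A^{\pm}(p) = \alpha_0^{\pm}p^{-1} + \alpha p + o(p)$ from Proposition \ref{prop11} (writing $\alpha := \alpha_1^{\pm}$ as the free parameter) into the rational identity (\ref{pu5}), pass to $p \to 0$, and compare the resulting rational function in $\alpha$ with the explicit limit
\[
\wp(\sigma_{\mathbf{n}^{\pm}}(B^{\pm}(\alpha), \hat{W})) = \frac{\hat{P}_1^{\pm}(B^{\pm}(\alpha))}{\hat{P}_2^{\pm}(B^{\pm}(\alpha))},
\]
where $B^{\pm}(\alpha)$ is affine in $\alpha$ by (\ref{q11})--(\ref{q12}). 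By Lemma \ref{cor ind}, this limit is in lowest terms with numerator $\alpha$-degree $m^{\pm}$ and denominator $\alpha$-degree $m^{\pm}-1$.

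First I would classify each zero $A_j^\infty(p)$ of $P_2(A,p)$ according to the alternatives of Proposition \ref{prop11}, letting $k_1^\pm$ count the types $A^\pm$ and $k_2^+, k_2^-, k_3^f, k_3^\infty$ count the cases \textbf{(D-i)}, \textbf{(D-ii)}, \textbf{(D-iii)} with $\beta \in \mathbb{C}$, and \textbf{(D-iii)} with $\beta = \infty$, respectively. A direct expansion under $A = A^+(p)$ shows that only the type $A^+$ zeros produce $\alpha$-dependent factors, namely $(\alpha - \alpha_j^{\infty^+})p + o(p)$; types $A^-$, \textbf{(D-ii)}, and \textbf{(D-iii)} with finite $\beta \neq -(\tfrac{1}{4}+n_0)$ contribute $\alpha$-independent constants of $p$-order $-1$; \textbf{(D-i)} contributes a nonzero constant; and \textbf{(D-iii)} with $\beta = \infty$, via the Puiseux expansion $A_j^\infty(p) \sim c_j p^{-t_j}$ with $t_j > 1$, contributes a factor of $p$-order $-t_j$. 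Analogous expansions apply to the $A_i^0(p)$ factors, which by Corollary \ref{u0} are only of type $A^\pm$.

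Matching $\alpha$-degrees: the RHS of (\ref{pu5}) converges to $C' \prod_{i}(\alpha - \alpha_i^{0^+}) / \prod_{j}(\alpha - \alpha_j^{\infty^+})$, of $\alpha$-degrees $(m_1, k_1^+)$. If there are $d \geq 0$ common factors, the lowest-terms degrees $(m_1 - d, k_1^+ - d) = (m^+, m^+-1)$ give $m_1 = m^+ + d$ and $k_1^+ = m_1 - 1$. Corollary \ref{cor11} provides $m_1 \leq m^+$ (via the affine bijection $\alpha \mapsto B^+(\alpha)$), so $d = 0$ and $m_1 = m^+$. The symmetric argument with $A = A^-(p)$ gives $m_2 = m^-$ and $k_1^- = m_2 - 1$. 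Hence $m_1 - 1$ of the $A_j^\infty(p)$'s are of type $A^+$ (their GLEs converge to $H(\mathbf{n}^+, B)$ with $B$ a zero of $\hat{P}_2^+$), $m_2 - 1$ are of type $A^-$, and the remaining two fall into one of \textbf{(D-i)}--\textbf{(D-iii)}.

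Finally, matching the total $p$-order ($=0$, since both sides are finite nonzero) under $A^+$ yields $k_2^- + k_3^f + S = 2$, where $S := \sum_{j \in k_3^\infty} t_j$; the analogous relation under $A^-$ is $k_2^+ + k_3^f + S = 2$. Combined with the total count $k_2^+ + k_2^- + k_3^f + k_3^\infty = 2$ (from $k_1^+ + k_1^- = m - 2$), these give $k_2^+ = k_2^- =: k_2$ and $S = k_2 + k_3^\infty$. If $k_3^\infty > 0$ then $S > k_3^\infty$ (since each $t_j > 1$), so $k_2 \geq 1$; but then $2k_2 + k_3^f + k_3^\infty \geq 3$ contradicts $2k_2 + k_3^f + k_3^\infty = 2$. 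Hence $k_3^\infty = 0$, $k_2 = 0$, and $k_3^f = 2$, which is exactly the claim. I expect the hardest part to be making the Puiseux-expansion analysis of $P_2(\cdot, p)$ at $p = 0$ fully rigorous and tracking the common-factor count $d$ precisely; the integrality of each $k_i$ as a count of GLEs is the decisive structural ingredient ruling out $\beta = \infty$.
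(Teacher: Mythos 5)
Your argument is correct and is built from the same raw materials as the paper's proof --- the test families $A^{\pm}(p)=\alpha_0^{\pm}p^{-1}+\alpha p+o(p)$ from Proposition \ref{prop11} together with the convergence $\wp(\sigma_{\mathbf{n},p}(A^{\pm}(p),W^{\pm}(p)))\rightarrow\wp(\sigma_{\mathbf{n}^{\pm}}(B^{\pm}(\alpha),\hat{W}))$ supplied by the proof of Theorem \ref{cor7}, the classification of the roots of $P_1,P_2$ via Corollary \ref{u0} and Proposition \ref{prop11}, the factorization (\ref{pu5}), and the degree relation (\ref{113}) --- but you deploy them in a genuinely different order. The paper first shows by contradiction (with a fixed generic target $\sigma_0$) that some $A_i^{\infty}(p)$ must diverge, then does $p$-power bookkeeping to restrict the divergent types to a short list $(\ell_1,\ell_2,\ell_3)$, and only afterwards uses the $B$-parametrized limit identity and (\ref{113}) to exclude the residual cases, so that $k_1=m_1-1$, $k_2=m_2-1$ emerge only at the very end; you instead run the $\alpha$-parametrized comparison first, and since the limiting rational function of $\alpha$ must agree in lowest terms with $\hat{P}_1^{\pm}(B^{\pm}(\alpha))/\hat{P}_2^{\pm}(B^{\pm}(\alpha))$, whose numerator--denominator degree gap is $1$, you obtain $k_1^{+}=m_1-1$ and $k_1^{-}=m_2-1$ at once, after which the two remaining roots are pinned down by the $p$-order balance plus integrality --- a cleaner endgame than the paper's case list. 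Two caveats: (a) Corollary \ref{cor11} is only a set-membership statement and does not control multiplicities, so it does not by itself give $m_1\leq m^{+}$; your deduction $d=0$, $m_1=m^{+}$ is therefore not justified as written, but this is harmless for the present lemma, which only needs the degree difference $m_1-k_1^{+}=1$ (the identities $m_1=m^{+}$, $m_2=m^{-}$ are exactly what the paper proves afterwards in Theorem \ref{main thm} by a separate comparison of the limiting rational functions); (b) the Puiseux expansion for the $\beta=\infty$ roots is not really needed: as in the paper's Steps 2 and 4, it suffices to use $|A^{\pm}(p)-A_j^{\infty}(p)|\gg|p|^{-1}$ together with the finiteness and nonvanishing of the two limits, which forces the exact $p$-order balance and already yields $S>k_3^{\infty}$ whenever $k_3^{\infty}>0$, so your counting argument goes through without invoking fractional power laws.
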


\begin{proof}
\textbf{Step 1}. We prove that there are some $i$'s such that GLE$(\mathbf{n}%
,p,A_{i}^{\infty}(p))$ does not converge as $p\rightarrow0$.

Suppose GLE$(\mathbf{n},p,A_{i}^{\infty}\left( p\right) )$ are convergent
for all $i$. Suppose $k_{1}$ of them converge to H$(\mathbf{n}^{+},B)$ and $%
k_{2}$ of them converge to H$(\mathbf{n}^{-},B)$. Then we rewrite (\ref{pu5}%
) as
\begin{equation}
\wp \left( \sigma_{\mathbf{n,}p}\left( A,W\right) \right) =\wp (p)\frac{%
\prod_{i=1}^{m_{1}}(A-A_{i}^{0^{+}}(p))\cdot
\prod_{i=1}^{m_{2}}(A-A_{i}^{0^{-}}(p))}{\prod_{i=1}^{k_{1}}\left(
A-A_{i}^{\infty^{+}}(p)\right) \cdot \prod_{i=1}^{k_{2}}\left(
A-A_{i}^{\infty^{-}}(p)\right) }.   \label{x}
\end{equation}
Fix $\sigma_{0}\in E_{\tau}\backslash E_{\tau}[2]$ such that $\wp(
\sigma_{0}) \not \in \{0,\infty \}$. The same proof of Theorem \protect\ref{cor7} implies that
there exist $\sigma_{p}^{\pm}\in E_{\tau}\backslash E_{\tau}[2]$ and
\begin{equation}
(A^{\pm}(p),W^{\pm}(p))\in \sigma_{\mathbf{n,}p}^{-1}(\sigma_{p}^{\pm})\;\text{with}\; \lim_{p\to 0}\sigma_p^{\pm}=\sigma_0,
\label{plus-1}
\end{equation}
such that GLE$(\mathbf{n},p,A^{\pm}(p))$ converges to H$(\mathbf{n}^{\pm
},B^{\pm})$ for some $B^{\pm}$, namely%
\begin{equation}
A^{+}(p)=-\left( \tfrac{1}{4}+n_{0}\right) p^{-1}+\alpha^{+}p+o(p)\text{ for
some }\alpha^{+}\in \mathbb{C},   \label{plus}
\end{equation}%
\begin{equation}
A^{-}(p)=\left( \tfrac{3}{4}+n_{0}\right) p^{-1}+\alpha^{-}p+o(p)\text{ for
some }\alpha^{-}\in \mathbb{C}.   \label{plus1}
\end{equation}
Recalling (\ref{q1})-(\ref{q2}) and (\ref{q1-1})-(\ref{q2-1}), we remark
that
\begin{equation}
\alpha^{+}\not \in \{ \alpha_{i}^{0^{+}}\}_{i}\cup \{
\alpha_{j}^{\infty^{+}}\}_{j},\text{ \ }\alpha^{-}\not \in \{
\alpha_{i}^{0^{-}}\}_{i}\cup \{ \alpha_{j}^{\infty^{-}}\}_{j},
\label{plus2}
\end{equation}
namely $A^{\pm}(p)-A_{i}^{0^{\pm}}(p)=(\alpha^{\pm}-\alpha_{i}^{0^{%
\pm}})p+o(p)$ with $\alpha^{\pm}-\alpha_{i}^{0^{\pm}}\not =0$ for any $i$
and so do $A^{\pm}(p)-A_{i}^{\infty^{\pm}}(p)$. For example, if $%
\alpha^{+}=\alpha_{i}^{0^{+}}$ for some $i$, then GLE$(\mathbf{n}%
,p,A_{i}^{0^{+}}\left( p\right) )$ also converges to H$(\mathbf{n}^{+},B^{+})
$, the same limit as that of GLE$(\mathbf{n},p,A^{+}\left( p\right) )$. This yields
from (\ref{plus-1}) that%
\begin{align*}
0 & =\wp \left( \sigma_{\mathbf{n,}p}\left( A_{i}^{0^{+}}( p)
,W_{i}^{0^{+}}( p) \right) \right) =\wp \left( \sigma _{\mathbf{n}%
^{+}}\left( B^{+},W^{+}\right) \right) \\
& =\lim_{p\to 0}\wp \left( \sigma_{\mathbf{n,}p}\left( A^{+}( p) ,W^{+}(
p) \right) \right)=\lim_{p\to 0}\wp(\sigma_p^{+}) =\wp(\sigma_{0})\not \in \{0,\infty \},
\end{align*}
a contradiction.

Now by inserting $A$ $=A^{+}(p)$ into (\ref{x}), we easily see from (\ref{q1}%
)-(\ref{q2}), (\ref{q1-1})-(\ref{q2-1}) and (\ref{plus}) that
\begin{equation}
\wp(\sigma_{0})+o(1)=\wp(\sigma_p^{+})=\frac{p^{m_{1}-m_{2}-2}\cdot \mathcal{O}(1)}{%
p^{k_{1}-k_{2}}\cdot \mathcal{O}(1)}\text{ as }p\rightarrow0.   \label{d+}
\end{equation}
Here different from the notation $O(1)$, we use the notation $\mathcal{O}(1)$
to denote various quantities depending on $p$ which is uniformly bounded
away from $0$ and $\infty$ as $p\rightarrow0$. However, inserting $A$ $%
=A^{-}(p)$ into (\ref{x}) leads to%
\begin{equation}
\wp(\sigma_{0})+o(1)=\wp(\sigma_p^{-}) =\frac{p^{m_{2}-m_{1}-2}\cdot \mathcal{O}(1)}{%
p^{k_{2}-k_{1}}\cdot \mathcal{O}(1)},   \label{d-}
\end{equation}
which contradicts with (\ref{d+}). This proves Step 1, namely there must
exist
\begin{equation*}
A_{i}^{\infty}( p) \in \mathbf{A}^{\infty}:= \{
A_{1}^{\infty}( p) ,\text{ }\cdot \cdot \cdot \text{ }%
,A_{m}^{\infty }( p) \}
\end{equation*}
such that GLE($\mathbf{n},p,A_{i}^{\infty}\left( p\right) $) does not
converge. Suppose there are $\ell_{1}$, $\ell_{2}$ and $\ell_{3}$ $%
A_{i}^{\infty}\left( p\right) $'s satisfying (D-i), (D-ii) and (D-iii)
respectively. Then we rewrite (\ref{pu5}) as%
\begin{align}
& \wp \left( \sigma_{\mathbf{n,}p}\left( A,W\right) \right) =  \label{xx} \\
& \frac{\wp \left( p\right) \prod_{i=1}^{m_{1}}(A-A_{i}^{0^{+}}(p))\cdot
\prod_{i=1}^{m_{2}}(A-A_{i}^{0^{-}}(p))}{\prod_{i=1}^{k_{1}}(A-A_{i}^{%
\infty^{+}}(p))\prod_{i=1}^{k_{2}}(A-A_{i}^{\infty^{-}}(p))\prod_{j=1}^{3}%
\prod_{i=1}^{\ell_{j}}(A-A_{i}^{\infty_{j}}(p))}.  \notag
\end{align}

\textbf{Step 2}. We prove that $\left( \ell_{1},\ell_{2},\ell_{3}\right) \in
\{ \left( 0,0,2\right) ,\left( 0,2,1\right) ,\left( 2,0,1\right) \}$.

By the asymptotics (\ref{plus})-(\ref{plus1}) and (D-i)-(D-iii), we have%
\begin{equation}
\left \{
\begin{array}{l}
A^{+}(p)-A_{i}^{\infty_{1}}(p)=-\alpha_{i}^{\infty_{1}}+o( 1) ,%
\text{ }\alpha_{i}^{\infty_{1}}\not =0, \\
A^{+}(p)-A_{i}^{\infty_{2}}(p)=-(1+2n_{0})p^{-1}+O(1),%
\end{array}
\right.   \label{a11}
\end{equation}%
\begin{equation}
\left \{
\begin{array}{l}
A^{-}(p)-A_{i}^{\infty_{1}}(p)=(1+2n_{0})p^{-1}+O(1), \\
A^{+}(p)-A_{i}^{\infty_{2}}(p)=-\alpha_{i}^{\infty_{2}}+o( 1) ,%
\text{ }\alpha_{i}^{\infty_{2}}\not =0,%
\end{array}
\right.   \label{a12}
\end{equation}
and%
\begin{equation}
\prod_{i=1}^{\ell_{3}}\frac{A^{+}( p) -A_{i}^{\infty_{3}}(
p) }{A^{-}( p) -A_{i}^{\infty_{3}}( p) }=%
\mathcal{O}(1).   \label{o}
\end{equation}
Again by inserting $A=A^{\pm}(p)$ into (\ref{xx}) and using (\ref{plus-1}), we have the following
identity%
\begin{align}
\wp(\sigma_{0}) & =\frac{p^{m_{1}-m_{2}-2}\cdot \mathcal{O}(1)}{%
p^{k_{1}-k_{2}-\ell_{2}}\prod_{i=1}^{\ell_{3}}(A^{+}(p)-A_{i}^{%
\infty_{3}}(p))}+o(1)  \label{xxx} \\
& =\frac{p^{m_{2}-m_{1}-2}\cdot \mathcal{O}(1)}{p^{k_{2}-k_{1}-\ell_{1}}%
\prod_{i=1}^{\ell_{3}}(A^{-}(p)-A_{i}^{\infty_{3}}(p))}+o(1).  \notag
\end{align}
Together with (\ref{o}), we obtain
\begin{equation}
2\left( m_{1}-m_{2}\right) =2\left( k_{1}-k_{2}\right) +\left( \ell
_{1}-\ell_{2}\right) ,   \label{m11}
\end{equation}
which implies $\ell_{1}-\ell_{2}$ is even. Since $\wp(\sigma_{0})\not =0$, (%
\ref{xxx}) and $|A^{+}(p)-A_{i}^{\infty_{3}}(p)|\geq|p|^{-1}\cdot \mathcal{O}%
(1)$ also yield
\begin{align}
\left \vert p\right \vert ^{-\ell_{3}} & \leq \mathcal{O}(1)\cdot\prod
\limits_{i=1}^{\ell_{3}}|A^{+}(p)-A_{i}^{\infty_{3}}(p)|  \label{m12} \\
& =\mathcal{O}(1)\cdot|p|^{m_{1}-m_{2}-2-(k_{1}-k_{2}-\ell_{2})}=\mathcal{O}%
(1)\cdot|p|^{\frac{\ell_{1}+\ell_{2}}{2}-2},  \notag
\end{align}
which implies $\frac{\ell_{1}+\ell_{2}}{2}-2\leq-\ell_{3}$, namely%
\begin{equation}
0\leq \ell_{1}+\ell_{2}+2\ell_{3}\leq4.   \label{m13}
\end{equation}
From here and that $\ell_{1}-\ell_{2}$ is even, we see that
\begin{align*}
\left( \ell_{1},\ell_{2},\ell_{3}\right) & =\left( 0,0,1\right) ,\left(
0,0,2\right) ,\left( 0,2,0\right) ,\left( 0,2,1\right) , \\
& \left( 2,0,0\right) ,\left( 2,0,1\right) ,\left( 1,1,0\right) ,\left(
1,1,1\right), (1,3,0), (3,1,0).
\end{align*}
On the other hand, we have%
\begin{equation}
m_{1}+m_{2}=m=k_{1}+k_{2}+\ell_{1}+\ell_{2}+\ell_{3}.   \label{m14}
\end{equation}
From here and (\ref{m11}), we can only have%
\begin{equation*}
\left( \ell_{1},\ell_{2},\ell_{3}\right) =\left( 0,0,2\right) ,\left(
0,2,1\right) ,\left( 2,0,1\right) ,\left( 1,1,0\right) .
\end{equation*}
The case $\left( \ell_{1},\ell_{2},\ell_{3}\right) $ $=\left( 1,1,0\right) $
is impossible by (\ref{xxx}). This proves Step 2.

\textbf{Step 3}. We prove that $(\ell_{1},\ell_{2},\ell_{3})\not
=(0,2,1),(2,0,1)$ and so $(\ell_{1},\ell_{2},\ell_{3})=(0,0,2)$.

Suppose $( \ell_{1},\ell_{2},\ell_{3}) =( 0,2,1) $.
Then it follows from (\ref{m11}) and (\ref{m14}) that $k_{1}=m_{1}-1$, $%
k_{2}=m_{2}-2$. Inserting these into (\ref{xxx}) leads to $%
A^{\pm}(p)-A_{1}^{\infty_{3}}(p)=\mathcal{O}(1)\cdot p^{-1}$, so
\begin{equation}
A_{1}^{\infty_{3}}(p)p\rightarrow \beta_{1}\in \mathbb{C}\backslash \{-(\tfrac{1}{4}%
+n_{0}),\tfrac{3}{4}+n_{0}\}.   \label{infty3}
\end{equation}

To get a contradiction, we take any $B$ and consider $A(p)$ such that GLE$(%
\mathbf{n},p,A(p))\rightarrow$ H$(\mathbf{n}^{-},B)$ as $p\rightarrow0$. The
existence of such $A(p)$ is proved in Theorem \ref{cor7}, which has the
following asymptotics%
\begin{equation*}
A(p)=\left( \tfrac{3}{4}+n_{0}\right) p^{-1}+\alpha_{1}^{-}p+o(
p)
\end{equation*}
where%
\begin{equation*}
\alpha_{1}^{-}:=\frac{B+\sum_{k=1}^{3}n_{k}(
n_{k}+1) e_{k}}{2n_{0}+1} .
\end{equation*}
Furthermore, the proof of Theorem \ref{cor7} also implies $\wp(\sigma_{\mathbf{n,}p}\left(
A(p),W(p)\right) )\rightarrow \wp(\sigma_{\mathbf{n}^{-}}(B,\hat{W}))$. By (\ref%
{xx}) and $( \ell_{1},\ell_{2},\ell_{3}) =( 0,2,1) $ we have
\begin{align*}
& \wp(\sigma_{\mathbf{n,}p}\left( A(p),W(p)\right) )= \\
& \frac{\wp(p)
\prod_{i=1}^{m_{1}}(A(p)-A_{i}^{0^{+}}(p))%
\prod_{i=1}^{m_{2}}(A(p)-A_{i}^{0^{-}}(p))(A(p)-A_{1}^{\infty_{3}}(p))^{-1}}{%
\prod_{i=1}^{m_{1}-1}(A(p)-A_{i}^{\infty^{+}}(p))%
\prod_{i=1}^{m_{2}-2}(A(p)-A_{i}^{\infty^{-}}(p))%
\prod_{i=1}^{2}(A(p)-A_{i}^{\infty_{2}}(p))}.
\end{align*}
Inserting the asymptotics of $A( p) $, (\ref{q1})-(\ref{q2}), (%
\ref{q1-1})-(\ref{q2-1}), (D-2) and (\ref{infty3}) to the above formula and
letting $p\rightarrow0$, we easily obtain%
\begin{equation}
\wp(\sigma_{\mathbf{n}^{-}}(B,\hat{W}))=\frac{\prod_{i=1}^{m_{2}}(
B-B_{i}^{0^{-}}) }{(1+2n_{0})(\frac{3}{4}+n_{0}-\beta_{1})\alpha_{1}^{%
\infty_{2}}\alpha_{2}^{\infty_{2}}\prod_{i=1}^{m_{2}-2}(
B-B_{i}^{\infty^{-}}) },   \label{***}
\end{equation}
where $B_{i}^{0^{-}}=\lim_{p\rightarrow0}B_{i}^{0^{-}}(p)$ is given in (\ref%
{q12}) and similarly for $B_{i}^{\infty^{-}}=\lim_{p\rightarrow0}B_{i}^{%
\infty^{-}}(p)$. However, (\ref{***}) contradicts with (\ref{puu}) and (\ref%
{113}). This proves $\left( \ell_{1},\ell_{2},\ell_{3}\right) \not =\left(
0,2,1\right) $. Similarly we can prove $\left(
\ell_{1},\ell_{2},\ell_{3}\right) \not =\left( 2,0,1\right) $ and so $(\ell
_{1},\ell_{2},\ell_{3})=(0,0,2)$.

\textbf{Step 4}. We complete the proof.

Since $\left( \ell_{1},\ell_{2},\ell_{3}\right) $ $=\left( 0,0,2\right) $,
then (\ref{m11}) and (\ref{m14}) imply $k_{1}=m_{1}-1$ and $k_{2}=m_{2}-1$.
Inserting these into (\ref{xxx}) leads to $\prod_{i=1}^{2}(A^{%
\pm}(p)-A_{i}^{\infty_{3}}(p))=\mathcal{O}(1)\cdot p^{-2}$ and so $%
A_{i}^{\infty_{3}}(p)p\rightarrow \beta_{i}\in \mathbb{C}\backslash \{-(%
\tfrac{1}{4}+n_{0}),(\tfrac{3}{4}+n_{0})\}$. The proof is complete.
\end{proof}

We are in the position to prove Theorem \ref{main thm}.

\begin{proof}[Proof of Theorem \protect\ref{main thm}]
We prove the case $k=0$ (the other cases $k\in \{1,2,3\}$ are similar). As
pointed out before, we only need to prove $m^{+}=m_{1}$ and $m^{-}=m_{2}$.

Since by Lemma \ref{lemma5}, we can rewrite (\ref{pu5}) as
\begin{align*}
& \wp \left( \sigma_{\mathbf{n,}p}\left( A,W\right) \right) \\
& =\frac{\wp(p)\prod_{i=1}^{m_{1}}(A-A_{i}^{0^{+}}(p))\cdot
\prod_{i=1}^{m_{2}}(A-A_{i}^{0^{-}}(p))}{\prod_{i=1}^{m_{1}-1}\left(
A-A_{i}^{\infty ^{+}}(p)\right) \prod_{i=1}^{m_{2}-1}\left(
A-A_{i}^{\infty^{-}}(p)\right) \prod_{i=m-1}^{m}\left(
A-A_{i}^{\infty}(p)\right) }.
\end{align*}
Then by repeating the argument of Step 3 in Lemma \ref{lemma5}, we obtain%
\begin{equation}\label{bbbb}
\wp(\sigma_{\mathbf{n}^{-}}(B,\hat{W}))=\hat{C}(\mathbf{n}^{-})\frac {%
\prod_{i=1}^{m_{2}}(B-B_{i}^{0^{-}})}{\prod_{i=1}^{m_{2}-1}(
B-B_{i}^{\infty^{-}}) },
\end{equation}
where $\hat{C}(\mathbf{n}^{-})$ is a nonzero constant. Comparing this
with (\ref{pu5-5}), we conclude $m_{2}=m^{-}=\deg \sigma_{\mathbf{n}^{-}}$.
Similarly we can prove $m_{1}=m^{+}=\deg \sigma_{\mathbf{n}^{+}}$. In
conclusion,%
\begin{equation*}
\deg \sigma_{\mathbf{n}^{+}}+\deg \sigma_{\mathbf{n}^{-}}=m_{1}+m_{2}=\deg
\sigma_{\mathbf{n},p}.
\end{equation*}
This completes the proof.
\end{proof}

\bigskip

{\bf Acknowledgements} The research of the first author was supported by NSFC.

\end{document}